\newtheorem{theorem}{Theorem}
\newtheorem{definition}{Definition}
\newtheorem{lemma}{Lemma}
\newtheorem{proposition}{Proposition}
\newtheorem{remark}{Remark}
\def\hpi{\widehat{\pi}}
\def\N{{\mathbb N}}
\def\EE{{\mathbb E}}
\def\PP{{\mathbb P}}
\def\RR{{\mathbb R}}
\def\ind{{\mathbf{1}}}
\def\ore{\overrightarrow{e}}
\def\orE{\overrightarrow{E}}
\def\d{\partial}
\def\bx{\bold{x}}
\def\bw{\bold{w}}
\def\bB{\bold{B}}
\def\Poi{\text{Poi}}
\def\Bin{\text{Bin}}
\def\odeg{\overline{\text{deg}}}
\def\Pcal{{\mathcal P}}
\def\Rcal{{\mathcal R}}
\def\Dcal{{\mathcal D}}
\def\Qcal{{\mathcal Q}}
\def\Gcal{{\mathcal G}}
\def\Fcal{{\mathcal F}}
\def\tp{\tilde{p}}
\def\Ucal{{\mathcal U}}
\def\BUT{{\mathcal{BUGWT}}}
\def\Yb{\bold{Y}}
\def\bY{\bold{Y}}
\def\bI{\bold{I}}
\def\bZ{\bold{Z}}
\def\bW{\bold{W}}
\def\bM{\bold{M}}
\def\Xb{\bold{X}}
\def\bX{\bold{X}}
\def\omu{\overline{\mu}}
\def\oq{\overline{q}}
\def\bac{\backslash}
\newcommand{\BEAS}{\begin{eqnarray*}}
\newcommand{\EEAS}{\end{eqnarray*}}
\newcommand{\BEA}{\begin{eqnarray}}
\newcommand{\EEA}{\end{eqnarray}}
\newcommand{\BEQ}{\begin{equation}}
\newcommand{\EEQ}{\end{equation}}
\newcommand{\BIT}{\begin{itemize}}
\newcommand{\EIT}{\end{itemize}}
\newcommand{\BNUM}{\begin{enumerate}}
\newcommand{\ENUM}{\end{enumerate}}
\begin{document}
\title{A new approach to the orientation\\ of random hypergraphs\footnote{A preliminary version of this paper appeared in \cite{soda12}.}}
\author{M. Lelarge\footnote{INRIA-ENS, Paris, France,
email: marc.lelarge@ens.fr}}
\date{}
\maketitle

\begin{abstract}
A $h$-uniform hypergraph $H=(V,E)$ is called $(\ell,k)$-orientable if
there exists an assignment of each hyperedge $e\in E$ to exactly
$\ell$ of its vertices $v\in e$ such that no vertex is assigned more
than $k$ hyperedges. Let $H_{n,m,h}$ be a hypergraph, drawn uniformly
at random from the set of all $h$-uniform hypergraphs with $n$
vertices and $m$ edges. In this paper, we determine the threshold of
the existence of a $(\ell,k)$-orientation of $H_{n,m,h}$ for $k\geq 1$
and $h>\ell\geq 1$, extending recent results motivated by applications
such as cuckoo hashing or load balancing with guaranteed maximum load.
Our proof combines the local weak convergence of sparse graphs and a
careful analysis of a Gibbs measure on spanning subgraphs with degree
constraints. It allows us to deal with a much broader class than the
uniform hypergraphs. 
\end{abstract}

{\bf Keywords:} hashing, local weak convergence, Gibbs measure.

{\bf AMS Subject Headings:} 68Q87, 68P05, 60C99.

\section{Introduction}

Motivated by load balancing problems \cite{sek03}, Gao and Wormald \cite{gawo10} introduced the following generalisation to random hypergraphs of a commonly studied orientation problem on graphs. A $h$-uniform hypergraph is a hypergraph such that all its hyperedges have size $h$. Let $h>\ell$ be two given positive integers. A hyperedge is said to be $\ell$-oriented if exactly $\ell$ distinct vertices in it are marked with positive signs with respect to the hyperedge. The indegree of a vertex is the number of positive signs it receives. Let $k$ be a positive integer. A $(\ell,k)$-orientation of an $h$-uniform hypergraph is a $\ell$-orientation of all hyperedges such that each vertex has indegree at most $k$. If such a $(\ell,k)$-orientation exists, we say that the hypergraph is $(\ell,k)$-orientable.
We consider $\Gcal_{n,m,h}$ the probability space of the set of all $h$-uniform hypergraphs on $n$ vertices and $m$ hyperedges with the uniform distribution. A random $h$-uniform hypergraph is then denoted by $H_{n,m,h}$. We are now ready to state our main result in this framework:
\begin{theorem}\label{the:hyperg}
Let $Q(x,y) = e^{-x}\sum_{j\geq y}\frac{x^j}{j!}$ and $\Bin(n,p)$ denote a binomial random variable with parameters $n\in \N$ and $p\in[0,1]$, i.e. $\PP(\Bin(n,p)=k) = {n\choose k}p^k(1-p)^{n-k}$. For integers $h>\ell\geq 1$, $k\geq 1$ with $\max(h-\ell,k)\geq 2$, %$k\geq1$, $h\geq 2$ and $h>\ell$, 
let $\xi^*$ be the unique positive solution to
\BEAS
hk =\xi^*\frac{\EE\left[\max\left( \ell-\Bin(h,1-Q(\xi^*,k)),0\right) \right]}{Q(\xi^*,k+1)\PP\left( \Bin(h-1,1-Q(\xi^*,k))<\ell\right)}.
\EEAS
Let
\BEAS
c^*_{h,\ell,k} = \frac{\xi^*}{h\PP\left( \Bin(h-1,1-Q(\xi^*,k))<\ell\right)}.
\EEAS
Then
\BEAS
\lim_{n\to \infty}\PP\left( H_{n,\lfloor cn \rfloor, h} \mbox{ is $(\ell,k)$-orientable}\right) =
\left\{
\begin{array}{ll}
0&\mbox{ if } c>c^*_{h,\ell,k},\\
1&\mbox{ if } c<c^*_{h,\ell,k}.
\end{array}
\right.
\EEAS
\end{theorem}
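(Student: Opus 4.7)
The plan is to encode $(\ell,k)$-orientability as a degree-constrained spanning subgraph problem on the bipartite incidence graph of $H_{n,\lfloor cn\rfloor,h}$, introduce a Gibbs measure whose $z\to\infty$ limit selects maximum $b$-matchings, and evaluate the thermodynamic limit using the local weak limit of the hypergraph, which is a two-type Galton-Watson tree. Concretely, let $G(H)$ be the bipartite incidence graph with parts $V$ and $E$: an $(\ell,k)$-orientation is precisely a spanning subgraph $F\subset G(H)$ in which every hyperedge-node has $F$-degree exactly $\ell$ and every vertex-node has $F$-degree at most $k$. Relaxing the hyperedge constraint to ``at most $\ell$'' gives a $b$-matching problem whose maximum size $M^*(H)$ satisfies $M^*(H)=\ell m$ iff $H$ is $(\ell,k)$-orientable, and I would study it through the Gibbs measure
\BEAS
\mu_{H,z}(F) = \frac{z^{|F|}}{Z_H(z)}\, \ind_{F\in\Ucal(H)}
\EEAS
on the set $\Ucal(H)$ of feasible subgraphs, which concentrates on maximum $b$-matchings as $z\to\infty$.

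Next, I would establish that $G(H_{n,\lfloor cn\rfloor,h})$ rooted at a uniform vertex converges in the local weak sense to the two-type Galton-Watson tree $T_c$ in which vertex-type nodes have $\Poi(hc)$ hyperedge-type offspring and hyperedge-type nodes have $h-1$ vertex-type offspring. Adapting the objective-method framework developed for matchings (in the spirit of Bordenave-Lelarge-Salez) to $b$-matchings with heterogeneous capacities $(\ell,k)$, I would show $n^{-1}\log Z_{H_{n,\lfloor cn\rfloor,h}}(z)\to \psi(z,c)$ for a limit $\psi$ given by a cavity recursion on $T_c$, and consequently $n^{-1}\EE[M^*]\to\gamma(c)$ for an explicit $\gamma$.

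The cavity recursion reduces to a scalar fixed-point equation for an effective Poisson parameter $\xi$ capturing the load at a vertex: interpreting $1-Q(\xi,k)$ as the probability that a vertex has strictly fewer than $k$ downward orientations, a cavity hyperedge sees $\Bin(h-1,1-Q(\xi,k))$ available children and, if this count is less than $\ell$, forces an orientation back onto its parent. Self-consistency for $\xi$ then gives $\xi = hc\cdot\PP(\Bin(h-1,1-Q(\xi,k))<\ell)$, which is precisely the equation defining $c^*_{h,\ell,k}$ in the second display of the theorem; the first display is the marginal-stability condition identifying the critical $\xi^*$ at which $\gamma(c)$ ceases to equal $\ell c$, with the numerator $\EE[\max(\ell-\Bin(h,1-Q(\xi^*,k)),0)]$ and denominator factor $Q(\xi^*,k+1)$ encoding respectively the expected per-hyperedge shortfall and the overload probability at a vertex. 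Monotonicity of the cavity map yields uniqueness of the positive $\xi^*$, from which one reads off $\gamma(c)=\ell c$ iff $c\leq c^*_{h,\ell,k}$, with strict inequality above the threshold.

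Since $M^*$ concentrates at scale $O(\sqrt n)$ around its mean via an edge-exposure martingale, $M^*/n\to\gamma(c)$ in probability. For $c>c^*_{h,\ell,k}$ this yields $M^*<\ell m$ and hence non-orientability w.h.p., closing the upper bound. For $c<c^*_{h,\ell,k}$ the identification $\gamma(c)=\ell c$ only says $M^*$ is within $o(n)$ of $\ell m$; upgrading this to $M^*=\ell m$ exactly is the main obstacle I anticipate. I would address it by combining the Gibbs-measure analysis in the $z\to\infty$ regime with a local augmenting-alternation argument exploiting the tree structure of $T_c$, together with a small-subgraph or second-moment calculation ruling out Hall-violating obstructions below the threshold, thereby converting the soft convergence $M^*/n\to\ell c$ into the exact statement $\PP(M^*=\ell m)\to 1$.
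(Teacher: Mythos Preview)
Your high-level strategy matches the paper's: encode orientability via a degree-constrained spanning subgraph on the bipartite incidence graph, analyze the associated Gibbs measure through cavity recursions on the limiting two-type Galton--Watson tree, and identify the threshold as the value of $c$ at which the optimized functional $\Fcal^A$ first drops below $\ell$. Your reading of the two displayed equations (self-consistency $\xi=hc\,\PP(\Bin(h-1,1-Q(\xi,k))<\ell)$ and marginal stability at criticality) is also correct.

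Two points need sharpening. First, uniqueness of the positive $\xi^*$ is not a direct consequence of monotonicity of the cavity map: the recursion $q=g^A\circ g^B(q)$ may have several fixed points. The paper instead analyzes $\Delta(x)=x-g^A\circ g^B(x)$, shows via a second-derivative computation that $\Delta$ has at most three zeros on $[0,1]$, and then obtains uniqueness of the \emph{critical} $\xi^*$ by a contradiction using that $c\mapsto\Fcal^A(x^*(c),c)$ is strictly decreasing once $x^*(c)>0$.

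Second, and more substantively, your plan for upgrading $M^*/n\to\ell c$ to $\PP(M^*=\ell m)\to1$ is the genuine gap. The paper does not rule out Hall-type obstructions directly, nor does the argument live on the limit tree $T_c$; it uses a \emph{sprinkling} argument taken from Gao--Wormald. One first works at a slightly larger density $\tilde c\in(c,c^*_{h,\ell,k})$, where the limit result leaves only $o(n)$ unsaturated hyperedges. The key combinatorial input is a density lemma: the alternating-path structure $K$ rooted at any unsaturated hyperedge must occupy a \emph{linear} fraction $\gamma n$ of the vertex side, because sparse random $h$-uniform hypergraphs contain no small subgraphs with edge/vertex ratio exceeding $k/\ell$. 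One then couples $H_{n,\lfloor cn\rfloor,h}$ inside $H_{n,\lfloor\tilde cn\rfloor,h}$ and deletes the excess $(\tilde c-c)n$ hyperedges sequentially; each deletion landing in the current $K$ reduces the gap by at least one, and since $|K|\ge\gamma n$ throughout, a binomial argument kills the $o(n)$ gap w.h.p. Your proposed ``second-moment calculation ruling out Hall-violating obstructions'' would be a genuinely different route and, as stated, is not enough to close the argument.
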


The characterisation of the threshold $c^*_{h,k,\ell}$ in
\cite{gawo10} (for $k$ sufficiently large) involves the solution of a
differential equation system which is rather complicated (according to
the authors themselves) and does not allow to get explicit values for
$c^*_{h,k,\ell}$. We believe that our method of proof and the
characterisation of the threshold in Theorem \ref{the:hyperg} is much
simpler. %and is valid under the much weaker condition $\max(h-\ell,k)\geq 2$.
Note that for the case $k=1$ and $\ell=h-1$, the threshold for
orientablity is equal to the threshold for the apparition of a giant
component so that we have
\BEAS
c^*_{h,h-1,1} =\frac{1}{h(h-1)}.
\EEAS
Hence our result allows to compute the orientation threshold on the
whole range of parameters $k\geq 1$ and $h>\ell\geq 1$.
To illustrate our result, we computed numerical values of the critical load $\frac{\ell c^*_{h,\ell,k}}{k}$ for different values of the parameters.
First when $\ell$ and $k$ vary while $h=4$ is fixed:

\begin{tabular}{c|cccccc}
$\ell\slash k$&1&2&3&4&5&6\\
\hline
1&0.9767701648&0.9982414840&0.9997951433&0.9999720662&0.9999958680&0.9999993570\\
2&0.7596968140&0.9266442602&0.9676950000&0.9834603210&0.9908051880&0.9946173050\\
3&0.0833333333&0.6612827547&0.7892143791&0.8525202000&0.8898186996&0.9141344769
\end{tabular}

Then when $h$ and $k$ vary while $\ell=2$ is fixed:

\begin{tabular}{c|cccccc}
$h\slash k$&1&2&3&4&5&6\\
\hline
%4&0.7596968140, 0.9266442602, 0.9676950000, 0.9834603210,     0.9908051880, 0.9946173050\\
5&0.8833250296&0.9730747564&0.9909792334&0.9964896324&0.9985201920&0.9993444714\\
6&0.9378552354&0.9894605852&0.9974188480&0.9992698236&0.9997769140&0.9999284650\\
7&0.9652101902&0.9957801256&0.9992689074&0.9998543770&0.9999687056&0.9999929390
\end{tabular}

\iffalse
\begin{figure}[htb]\label{fig:load1}
\begin{center}
 \includegraphics[angle =0,width = 6cm]{load1.eps}
\caption{Critical load $\frac{\ell c^*_{h,\ell,k}}{k}$ as a function of $k=1\dots 10$ for a fixed $h=4$ and $\ell=1$ (red), $\ell=2$ (green), $\ell=3$ (yellow).}
\end{center}\end{figure}
\begin{figure}[htb]\label{fig:load2}
\begin{center}
 \includegraphics[angle =0,width = 6cm]{load2.eps}
\caption{Critical load $\frac{\ell c^*_{h,\ell,k}}{k}$ as a function of $k=1\dots 10$ for a fixed $\ell=2$ with $h=4$ (red), $h=5$ (green), $h=6$ (yellow).}
\end{center}\end{figure}
\fi
We should also stress that our result (and our proof) unifies various results available for different ranges of the parameters.

The case $\ell=1$ has attracted a lot of attention and can be
described in different terminologies. In the balanced allocation
paradigm \cite{abku99}, we have $m$ balls and $n$ bins. To each ball,
two bins are assigned at random. Each ball is to be placed in one of
the two bins assigned to it; the aim is to keep the maximal load
small. Another formulation of the same problem can be given in data
structure language: in the cuckoo hashing method \cite{pr01}, each one
of $m$ keys is assigned two locations in a hash table of size $n$ and
can be stored in one of the two locations. If each location has
capacity one (or in the balanced allocation, maximal load cannot
exceed one), the offline version of this problem corresponds exactly
to the standard $1$-orientability of the classical random graph
$G_{n,m}$ drawn uniformly from the set of all graphs with $n$ vertices
and $m$ edges. To see the connection, associate to each location a
vertex of the graph and to each key an edge of the graph: the
orientation of the edges correspond to the allocation. It is easily
seen in this case that we must have $m<n/2$ in order for $G_{n,m}$ to
be $1$-orientable. An interesting generalisation of this problem
considers bins/locations of capacity $k\geq 1$ \cite{diewei07}. This
generalisation (with the number of choices per ball/key still equals
to $2$) corresponds to the $(1,k)$-orientability described above for
the random graph $G_{n,m}=H_{n,m,2}$. For $k\geq 2$, the sharp
threshold for the $k$-orientability of the random graph $G_{n,m}$,
corresponding to $c^*_{2,1,k}$ in our Theorem \ref{the:hyperg} was
found simultaneously by Cain, Sanders and Wormald \cite{csw07} and
Fernholz and Ramachandran \cite{fr07}. Another generalisation
allows for $h> 2$ choices of bins/locations. In this
case, the graph associated to the problem is a random
$h$-hypergraph. The $(1,1)$-orientability threshold of $h$-uniform
random hypergraph with $h\geq 3$, corresponding to our $c^*_{h,1,1}$
in Theorem \ref{the:hyperg}, has been independently computed by
Dietzfelbinger et al. \cite{dgmmpr10}, Fountoulakis and Panagiotou
\cite{fopa10} and Frieze and Melsted \cite{frpa09}. This corresponds
to a case where bins/locations have unit capacity. Recently, the
extension to capacity $k\geq 1$ has been solved by Fountoulakis, Kosha
and Panagiotou \cite{fkp11} for any $h\geq 3$ (refining results of
\cite{gawo10} for this particular case). Our derivation of $c^*_{h,1,k}$ agrees with \cite{fkp11}. In the generalisation proposed in \cite{gawo10}, each
batch of $\ell$ balls has $h>\ell$ choices of bins and each of the $n$ bins has a
capacity $k$. Our Theorem \ref{the:hyperg} gives the threshold
$c_{h,\ell,k}$ such that, as $n$ goes to infinity: if $n/m\leq c$ with
$c<c_{h,\ell,k}$ then there exists with high probability an allocation of the $m$
balls such that the maximal capacity of a bin is one; and if $n/m\geq
c$ with $c>c_{h,\ell,k}$ then such allocation does not exist with high
probability.

%We refer to \cite{llm} for a further generalisation with applications to distributed storage systems. 

Our approach is completely different form the works cited above. We
consider the incidence graph of the $h$-uniform hypergraph $H$, which
is a bipartite graph $G=(A\cup B, E)$ where $A$ is the set of
hyperedges, i.e. are vertices in $G$ with degree $h$ and $B$ is the set of vertices of $H$. 
We then consider spanning subgraphs $S=(A\cup B, F)$ of $G$ with degree constraints:
any vertex from $A$ must have degree at most $\ell$ in $S$ while any
vertex from $B$ must have degree at most $k$ in $S$. Let the size of
such a spanning subgraph be the number of edges $|F|$ in $S$. The
following claim is easy to check and will be the basis of our
approach: $H$ is $(\ell,k)$-orientable if and only if all vertices in
$A$ have degree $\ell$ in any maximum spanning subgraph with degree
constraints $(\ell,k)$.
In this case, the size of any maximum spanning subgraph is
$\ell|A|$. Indeed in the case $(\ell,k)=(1,1)$ a spanning subgraph
with degree constraints $(1,1)$ is simply a matching of $G$. Based on this
observation, Bordenave, Salez and the author already derived the value
of $c^*_{h,1,1}$ in \cite{bls11}. The analysis of maximum spanning
subgraphs with general degree constraints $\ell$ and $k$ requires a
significant extension of the results in \cite{bls11}. Wagner
\cite{wag09} and, more closely related to our work, Salez \cite{salez}
study the generating polynomial for spanning subgraphs with degree
constraints. It follows from \cite{wag09} that for a sequence of
graphs whose size goes to infinity and having a random weak limit (see
definition in the sequel or \cite{besc01}, \cite{alst04}), the
rescaled size of a maximum spanning subgraph converges. In the case,
where the random weak limit is a Galton Watson tree, this limit is
characterised in \cite{salez} and computed in the particular case of
constant degree constraint.
In this work, we are able to simplify the characterisation of
\cite{salez} in our Proposition \ref{prop:MG} and to connect it to a simple message-passing algorithm. It allows us to bypass
the resolution of a difficult recursive distributional equation (which
was a key step in \cite{bls11} or \cite{salez}). 
It should perhaps be noted that the result in this paper is stronger
than \cite{csw07,fr07,dgmmpr10,fopa10,fkp11,gawo10} in the sense that it gives the size
of the largest spanning subgraph for all values of the parameter
$c>0$. We state this result explicitly in the following theorem:
\begin{theorem}\label{the:over}
Denote by $M_{\ell,k}(H)$ the size of a maximum spanning subgraph of
the bipartite graph $H$ with degree constraints $(\ell,k)$. With the
same notation as in Theorem \ref{the:hyperg} and for any integers
$h>\ell\geq 1$ and $k\geq 1$, we define the function of $(q,c)\in
[0,1]\times \RR_+$:
\BEAS
\Fcal_{\ell,k}(q,c) = \EE\left[ \min\left( \ell,
    \Bin(h,1-Q(chq,k))\right)\right]+\frac{kQ(chq,k+1)}{c}.
\EEAS
Then we have for any $c>0$,
\BEAS
\lim_{n\to \infty}\frac{1}{cn} M_{\ell,k}(H_{n,\lfloor cn \rfloor,h}) =
\inf_{q\in [0,1]}\Fcal_{\ell,k}(q,c).
\EEAS
\end{theorem}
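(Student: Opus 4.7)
The plan is to combine local weak convergence of the incidence bipartite graph with the message-passing characterisation of Proposition \ref{prop:MG}. Let $G_n$ denote the bipartite incidence graph of $H_{n,\lfloor cn\rfloor, h}$, with hyperedge-side $A_n$ (of size $\lfloor cn \rfloor$, each vertex of degree $h$) and vertex-side $B_n$ (of size $n$, whose degrees concentrate around $ch$). A standard coupling/Poissonisation argument (as in \cite{bls11}) shows that, rooted at a uniformly chosen vertex, $G_n$ converges in the local weak sense to a two-type Galton--Watson tree $\mathcal T$: each $A$-vertex has exactly $h$ type-$B$ children, while each $B$-vertex has a $\Poi(ch)$-distributed number of type-$A$ children. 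Following \cite{wag09} and \cite{salez}, the rescaled maximum $M_{\ell,k}(G_n)/(cn)$ is continuous under local weak convergence: at finite inverse temperature $\beta$, the partition function of spanning subgraphs $F$ weighted by $e^{\beta|F|}$ factorises along a tree, its log-density converges as a local functional, and letting $\beta \to \infty$ recovers the maximum density. Hence the rescaled maximum converges to a deterministic functional $M_\infty(\mathcal T)$ of the law of $\mathcal T$.

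By Proposition \ref{prop:MG}, this functional admits a simple message-passing description. Interpret $q \in [0,1]$ as the probability that a message along an $A \to B$ edge signals ``this edge is needed on the $A$-side''. Then, by Poisson thinning, the reverse $B \to A$ message refuses acceptance with probability $Q(chq, k)$: the number of competing $A$-siblings of a given $B$-node is $\Poi(chq)$, and the $B$-node refuses when at least $k$ siblings request. Consistency yields $q = \PP(\Bin(h-1, 1-Q(chq, k)) < \ell)$, exactly the fixed-point equation underlying Theorem \ref{the:hyperg}. The function $\Fcal_{\ell,k}(q, c)$ then emerges as the associated Bethe free energy, normalised per $A$-vertex: $\EE[\min(\ell, \Bin(h, 1-Q(chq,k)))]$ is the expected $A$-degree when $B$-neighbours accept independently with probability $1-Q(chq,k)$, while $k Q(chq, k+1)/c$ is a correction from over-subscribed $B$-vertices (receiving more than $k$ requests), rescaled by $|B|/|A| = 1/c$. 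A direct calculation shows $\Fcal_{\ell,k}(q^*, c) = M_\infty(\mathcal T)$ at any fixed point $q^*$.

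To pass from this pointwise identity to the variational formula $\inf_{q \in [0,1]} \Fcal_{\ell,k}(q,c)$, I would verify that $\Fcal_{\ell,k}(\cdot, c)$ is stationary precisely at fixed points of the recursion (by direct differentiation using the identity $j\PP(\Poi(\mu)=j) = \mu\PP(\Poi(\mu)=j-1)$ together with standard Binomial differencing), and then isolate the relevant branch via monotonicity. Since the recursion map $q \mapsto \PP(\Bin(h-1, 1-Q(chq, k)) < \ell)$ is monotone in $q$ (via stochastic domination for the composition of $\Poi$ and $\Bin$), iteration from $q = 0$ and $q = 1$ converges to the extremal fixed points, and the minimiser of $\Fcal_{\ell,k}$ can be identified with the physically relevant one.

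The main obstacle is the variational identity itself: proving that $\inf_q \Fcal_{\ell,k}(q,c)$ equals $M_\infty(\mathcal T)$ rather than merely agreeing at one fixed point. The natural route is to exhibit $\Fcal_{\ell,k}(q,c)$ as a bona fide upper bound on $M_{\ell,k}(G_n)/(cn)$ for every $q$---e.g.\ by constructing, for each $q$, a feasible dual certificate (integer cover of the degree-constrained subgraph polytope) or a one-step local improvement bound on $\mathcal T$---and matching the bound at the fixed point. The remaining ingredients (local weak convergence of $G_n$ and continuity of the maximum subgraph functional) are by now standard given the machinery of \cite{wag09} and \cite{salez} and are not the technical heart of the argument.
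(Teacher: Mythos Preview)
Your overall architecture matches the paper's: pass to the local weak limit (a two-type Galton--Watson tree), invoke Proposition~\ref{prop:MG} on that limit, specialise the messages to the Bernoulli/Poisson setting, and finally remove the fixed-point constraint by the derivative computation showing that the stationary points of $\Fcal_{\ell,k}(\cdot,c)$ are exactly the solutions of $q=g^A\circ g^B(q)$.

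There is, however, a genuine misreading of Proposition~\ref{prop:MG}. You assert that ``a direct calculation shows $\Fcal_{\ell,k}(q^*,c)=M_\infty(\mathcal T)$ at any fixed point $q^*$''. This is false in general: when the recursion has several fixed points, they give different values of $\Fcal_{\ell,k}$, and only the smallest of these equals $M_\infty(\mathcal T)$. Proposition~\ref{prop:MG} does \emph{not} say that every fixed point computes the limit; it says
\[
\lim_{z\to\infty}\int\Dcal_\circ(\bY(z))\,d\rho
=\inf\Bigl\{\int F_\circ(\bI)\,d\rho:\ \bI=\Pcal_G\circ\Pcal_G(\bI)\ \text{spatially invariant}\Bigr\},
\]
and the infimum is the whole point. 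Its proof (via the auxiliary sequences $\bW^\bI$, Lemma~\ref{lem:monotI} and Lemma~\ref{lem:WI}) is precisely the ``upper bound for every $q$'' that you identify as the main obstacle and propose to handle by an ad hoc dual certificate. That detour is unnecessary: once you read off from Proposition~\ref{prop:MG} that $M_\infty(\mathcal T)=\inf\{\Fcal_{\ell,k}(q,c):q=g^A\circ g^B(q)\}$, the derivative identity
\[
\frac{d}{dq}\Fcal_{\ell,k}(q,c)=\phi_A'(1)\,h^B(q)\bigl(q-g^A\circ g^B(q)\bigr)
\]
(which you correctly anticipate) shows that every local minimum of $\Fcal_{\ell,k}(\cdot,c)$ on $[0,1]$ is a fixed point, so the constrained and unconstrained infima coincide. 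Monotonicity of the recursion map plays no role in this step; the paper does not use it, and your appeal to ``isolating the relevant branch via monotonicity'' is not needed.

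In short: drop the claim of equality at every fixed point and the dual-certificate plan; instead, extract the infimum directly from Proposition~\ref{prop:MG} (that is where the real work lies), and then use the derivative identity exactly as you outlined to pass to the unconstrained infimum.
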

It follows from calculations made in Section \ref{sec:orient} that for
$c<c^*_{h,\ell,k}$, we have $\inf_{q\in
  [0,1]}\Fcal_{\ell,k}(q,c)=\ell$ while for $c>c^*_{h,\ell,k}$, we
have $\inf_{q\in [0,1]}\Fcal_{\ell,k}(q,c)<\ell$. In particular, for
$c>c^*_{h,\ell,k}$, Theorem \ref{the:over} gives the asymptotic
fraction for the number of balls/keys that cannot be stored in the system.
A similar statement was proved in \cite{frpa09} in the particular case
of $\ell=k=1$ corresponding to spanning subgraphs being matchings. In
the sequel, we will give a more general statement of this result (see
Theorem \ref{the:limM}) which allows to deal with a larger family of
random hypergraphs.
Indeed,  we believe that this approach will allow to deal with more complex
situations where degree constraints could be random and/or asymptotic degree
distributions could be changed and can lead to efficient load
balancing algorithms. We refer to \cite{llm} where such extensions are
explored. 

The rest of this paper is organised as follows. We give an overview of our proof in Section \ref{sec:gov}.
We start with a careful analysis of a Boltzmann-Gibbs distribution on
spanning subgraphs with degree constraints in Section \ref{sec:gibbs}
and show some crucial monotonicity of the model. We then give an
explicit characterisation of the size of a maximum spanning subgraph
when the underlying graph is a finite tree and are able to extend it
to a possibly infinite tree using the important notion of
unimodularity \cite{aldouslyons}. In Section \ref{sec:comp}, we apply
these general results to the particular case where the underlying tree
is a branching process. This allows us to derive the asymptotic for
the size of a maximum spanning subgraph for a converging sequence of
bipartite graphs. In Section \ref{sec:orient}, we apply our results
for the particular sequence of graph $H_{n,\lfloor cn\rfloor, h}$ and
derive Theorem \ref{the:hyperg}.

\section{Gibbs measures and overview of the proof}\label{sec:gov}

We consider a finite simple graph $G=(V,E)$ with a vector of $\N^V$ denoted by $\bw=(w_v,\: v\in V)$ and called the vector of (degree) constraints. We are interested in spanning subgraphs $(V,F)$ with degree constraints given by the vector $\bw$. Each such subgraph is determined by its edge-set $F\subseteq E$ encoded by the vector $\bB = (B_e,\:e\in E)\in \{0,1\}^E$ defined by $B_e=1$ if and only if $e\in F$. We say that a spanning subgraph $\bB$ satisfies the degree constraints or is admissible if for all $v\in V$, we have $\sum_{e\in \d v}B_e\leq w_v$, where $\d v$ denotes the set of incident edges in $G$ to $v$.
We introduce the family of probability distributions on the set of admissible spanning subgraphs parametrised by a parameter $z>0$:
\BEA
\label{eq:gib}\mu_{G}^z(\bB) = \frac{z^{\sum_e B_e}}{P_G(z)},
\EEA
where $P_G(z) = \sum_{\bB} z^{\sum_e B_e}\prod_{v\in V}\ind(\sum_{e\in \d v}B_e\leq w_v)$.
We also define the size of the spanning subgraph by $|F|=\sum_eB_e$ and denote the maximum size by $M(G)= \max\{\sum_e B_e:\: \bB \mbox{ admissible}\}$. Those spanning subgraphs which achieve this maximum are called maximum spanning subgraphs. For any finite graph, when $z$ tends to infinity, the distribution $\mu^z_G$ converges to the uniform distribution over maximum spanning subgraphs. For an admissible spanning subgraph, the degree of $v$ in the subgraph is simply $\sum_{e\in \d v}B_e$. By linearity of expectation, the mean degree of $v$  under the law $\mu_G^z$ is $D_v^z:=\sum_{e\in \d v}\mu_G^z\left(B_e=1\right)$ so that we have
\BEA
\label{eq:limM}M(G) = \frac{1}{2} \sum_{v\in V}\lim_{z\to \infty}D_v^z.
\EEA

The main part of our work will be devoted to the computation of the limit on the right-hand side of (\ref{eq:limM}).
In the remaining part of this section, we give an informal description of the main steps.
The reader interested in the mathematical proof can skip the rest of this section and proceeds directly to Section \ref{sec:gibbs}.

Recall from the Introduction that we see hypergraph $H$ as bipartite graph $G=(A\cup B, E)$. Then, an hypergraph is $(\ell,k)$-orientable if and only if all vertices in $A$ have degree $\ell$ in any maximum spanning subgraph of the corresponding bipartite graph with degree constraints $(\ell,k)$.
Indeed in this case, we have for any $v\in A$, $\lim_{z\to \infty}D^z_{v}=\ell$ so that the size of a maximum spanning subgraph is $M(G)=\ell|A|$. Our main result will show that $\lim_{n\to \infty} \frac{M(G_n(c))}{|A_n|}=\inf_{q}\Fcal_{\ell,k}(q,c)$, where $G_n(c)=(A_n\cup B_n,E_n)$ is the bipartite graph associated to $H_{n,\lfloor cn \rfloor,h}$ and the function $\Fcal_{\ell,k}(q,c)$ is defined in Theorem \ref{the:over}.
In particular, when $\inf_q\Fcal_{\ell,k}(q,c)<\ell$, our result allows us to
conclude that the hypergraph is not $(\ell,k)$-orientable for such
values of $c$. In order to get the second half of the Theorem
\ref{the:hyperg}, when $\inf_q\Fcal_{\ell,k}(q,c)=\ell$, we need to show that
with high probability (as $n$ tends to infinity) there are actually no
vertices in $A_n$ with degree less than $\ell$ in a maximum spanning
subgraph (since the limit ensures only that the number of such
vertices is $o(n)$). For this part of the proof, we use a density
argument which relies on a combinatorial argument of
\cite{gawoarxiv10}. This is done in Section \ref{sec:orient}.

We now concentrate on the computation of the limit $\lim_{n\to \infty} \frac{M(G_n(c))}{|A_n|}$ using techniques from the objective method developed by Aldous and Steele \cite{alst04}.
A fundamental ingredient of the proof is the fact that the bipartite
graphs associated to $h$-uniform hypergraphs considered in this paper,
i.e. with $n$ vertices and $cn$ hyperedges are locally tree-like: with
high probability, there is no cycle in a ball (of fixed radius) around
a vertex chosen at random. It is then instructive to study maximum
spanning subgraphs when the underlying graph is a tree. 
Let first study the Gibbs measures defined by (\ref{eq:gib}) in the limit $z\to \infty$ in order to analyse maximum spanning subgraphs. When the underlying graph is a finite tree, we can use a more direct and algorithmic way that we now describe. 

To study the $(\ell,k)$-orientability of the hypergraph $H$ associated
to the bipartite graph $G$, the vector of degree constraints $\bw$
should be chosen such that $w_v=\ell$ for $v\in A$ and $w_v=k$ for
$v\in B$.
For simplicity, we assume here that the vector of degree constraints is constant so that all vertices have the same degree constraint say $w\geq 1$. Consider now the following message-passing algorithm forwarding messages in $\{0,1\}$ on the oriented edges of the underlying tree $G$ as follows: at each round, each oriented edge forwards a message, hence two messages are sent on each edge (one in each direction) at each round.
The message passed on the oriented edge $\ore=(u,v)$ is $0$ if the sum of the incoming messages to $u$ from neighbours different from $v$ in previous round is at least $w$ and the message is $1$ otherwise, i.e. if the sum of the incoming messages is strictly less than $w$.
Let $\bI_k\in \{0,1\}^{\orE}$ be the vector describing the messages sent on the oriented edges in $\orE$ at the $k$-th round of the algorithm. Denote by $\Pcal_G$ the action of the algorithm on the messages in one round so that $\bI_{k+1}=\Pcal_G(\bI_k)$. Assume that the algorithm is initialised with all messages set to one: $\bI_0=\bold{1}$. Figure \ref{fig:dogx} shows an example with $w=2$ for the messages exchanged for the three first rounds.
\begin{figure}[htb]
\begin{center}
\psfrag{Pcal}{$\bI_1=\mathcal{P}_G(\bold{1})$}
\psfrag{Pcal1}{$\bI_2=\mathcal{P}_G(\bI_1)$}
\psfrag{Pcal2}{$\bI_3=\mathcal{P}_G\circ\mathcal{P}_G(\bI_1)$}
\includegraphics[width=5cm]{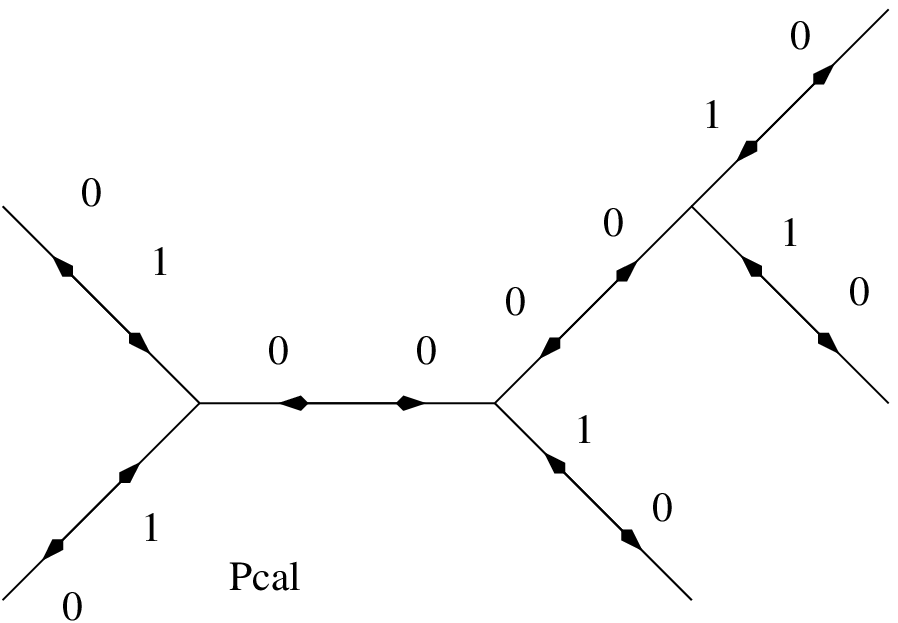}\hspace{0.7cm}
\includegraphics[width=5cm]{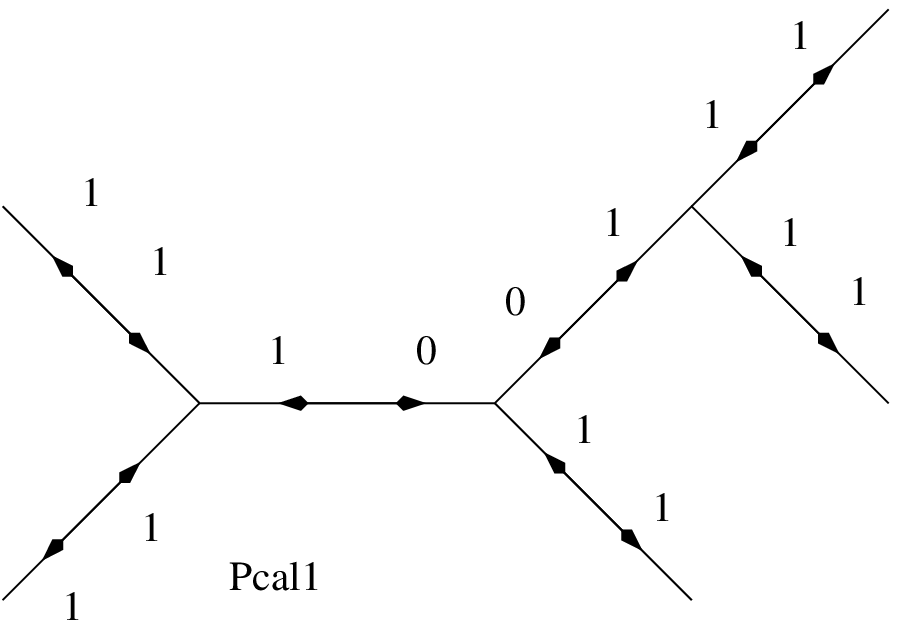}\hspace{0.7cm}
\includegraphics[width=5cm]{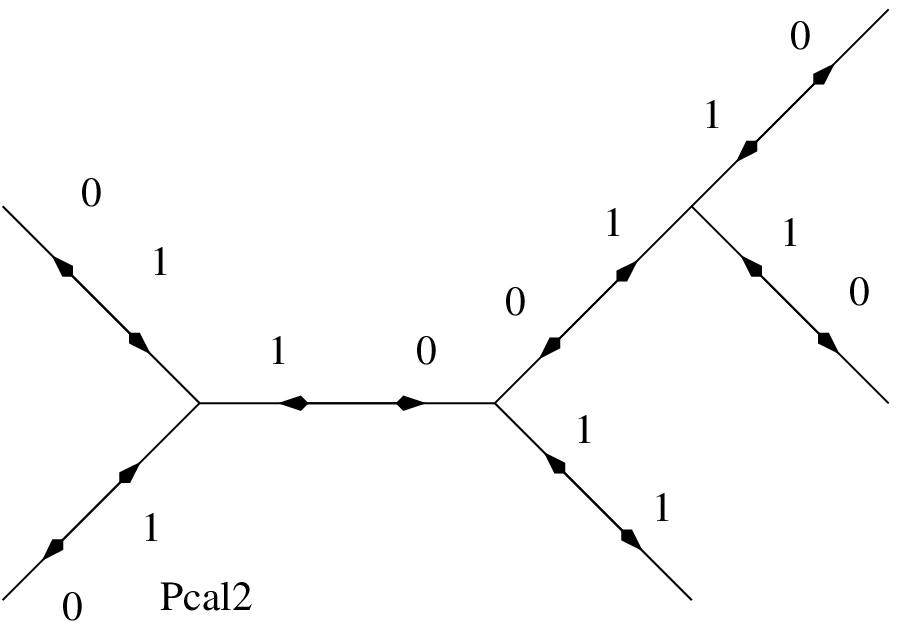}
\hspace{25pt} \caption{Iterating $\mathcal{P}_G$ on a finite tree
  (with $w_v=2$).}
\label{fig:dogx}
\end{center}\end{figure}
In Figure \ref{fig:dogx}, iterating a fourth time the algorithm would again give $\bI_3$. Indeed, it is easy to see that the algorithm will converge on any finite tree after a number of steps equals to at most the diameter of the tree, whatever the initial condition. Hence the messages of the algorithm converge to a vector $\bI^*=(I^*_{\ore}, \ore\in \orE)$ solving the fixed-point equation $\bI^*=\Pcal_G(\bI^*)$ and the size of a maximum spanning subgraph is given by
\BEA
\label{eq:algo}\frac{1}{2}\sum_{v\in V} \left(2w\ind \left(\sum_{\ore\in \partial v}I^*_{\ore}\geq w+1\right)+\ind \left(\sum_{\ore\in \partial v}I^*_{\ore}\leq w\right)\sum_{\ore\in \partial v}I^*_{\ore}\right),
\EEA
where $\partial v$ is the set of oriented edges toward $v$.
For example, one can check on Figure \ref{fig:dogx} using $\bI_3=\bI^*$ that the formula given by (\ref{eq:algo}) equals 5 which is the size of a maximum spanning subgraph.

Proving the correctness of the algorithm and of the formula (\ref{eq:algo}) on finite trees is simple once the following observations have been made:
\begin{itemize}
\item a vertex $v$ such that $\sum_{\ore\in \partial v}I^*_{\ore}\geq w+1$, will have degree $w$ in any maximum spanning subgraph and all messages in $\bI^*$ sent by $v$ will be zero.
\item an edge $(u,v)$ such that $I^*_{u\to v}=I^*_{v\to u}=1$ will be covered (i.e. with $B_{(u,v)}=1$) by any maximum spanning subgraph.
\item an edge $(u,v)$ such that $I^*_{u\to v}=I^*_{v\to u}=0$ will never be covered (i.e. with $B_{(u,v)}=0$) in any maximum spanning subgraph.
\end{itemize}

Note that the correctness of the algorithm is ensured for trees only,
but the definition of the algorithm does not require the graph to be a
tree. It makes only local computations and can be used on any graph
(without guarantee of converging and even in this case without
guarantee of correctness). Since the bipartite graphs associated to
$H_{n,\lfloor cn\rfloor,h}$ are not trees but are locally tree like,
it is tempting to use the algorithm directly on these graphs. It turns
out that for low values of $c$, the algorithm will converge and will
also be correct (with high probability). This is not a surprise since
for $c<1/h$, the random graph is essentially a collection of small
tree components. It turns out that the algorithm allows to compute the
size of a maximum spanning subgraph for values of $c$ above $1/h$ but
it breaks down at some higher value of $c$, indeed exactly when the
$(\ell, k+1)$-core as defined in \cite{gawo10} appears. We did not try
to make this claim rigorous as it is not required for our analysis. 
In the particular case where $h=2$ and $\ell=k=1$ (i.e. $w_v=1$ for
all vertices $v$), our problem reduces to
the problem of maximum matching in Erd\H{o}s-R\'enyi random graphs and
our claim follows from the analysis of Karp and Sipser \cite{ks81}.
More precisely, the greedy algorithm described above corresponds in
this case to the standard leaf-removal algorithm studied in
\cite{ks81} (see also Proposition 12 in \cite{blsaop11} and the
Appendix of \cite{bls11}). The $(1,1)$-core is simply called the
core in these references and it appears if the mean degree is above $e$.
If there exists a $(\ell,k+1)$-core, the intuition is as follows: if we
approximate the graph by a branching process and run the algorithm on
this tree starting from level $k$ from a root, the influence of the
boundary conditions on the value given by the algorithm at the root is
positive as we let $k$ tends to infinity. This translates into the fact
that on an infinite tree, there might exist several solutions to the
fixed-point equation $\bI=\Pcal_G(\bI)$. It is then natural to ask
which one is associated to the large $n$ limit maximum spanning subgraphs.
From an algorithmic viewpoint, there is 'no correlation decay' and the
computations made by the algorithm is not anymore local.

In order to bypass this absence of 'correlation decay', we borrow
ideas from statistical physics by introducing the Gibbs measures
$\mu_G^z$ defined in (\ref{eq:gib}) parametrised by a parameter $z>0$
(usually called the activity or the fugacity) \cite{demo10}. 
Informally, the introduction of this parameter $z$ will allow us
to capture sufficient additional information on our problem in order
to identify the 'right' solution to the fixed-point equation
$\bI=\Pcal_G(\bI)$, when we let $z$ goes to infinity.
Our first step in the analysis of these measures is to derive a
message-passing algorithm allowing to compute the mean degree $D^z_v$
of any vertex $v$ in a spanning subgraph taken at random according to
the probability distribution $\mu_G^z$. We will proceed by first
defining the local computations required at each node and we call them
the local operators. We use these building blocks to define a
message-passing algorithm which is valid on any finite tree. In
particular, we show that as $z$ tends to infinity, the dynamic of the
algorithm becomes exactly the one described previously in this
section. As a by-product, we prove the validity of (\ref{eq:algo}),
see Proposition \ref{prop:MGfinite}.
The advantage of considering these algorithms with $z<\infty$ is that
we are able to define them properly on infinite graphs if these graphs
have a natural stationarity property called unimodularity
\cite{aldouslyons}. Section \ref{sec:unimod} presents this notion in
details and shows how it is used in our framework. Proposition
\ref{prop:fpoint} shows that 'the message-passing algorithm converges
to a unique fixed point for any $z<\infty$'.
In other words, Proposition \ref{prop:fpoint} shows that for any
$z<\infty$, there is 'correlation decay'. In the limit
$z=\infty$, the algorithm may 'have more than one fixed-point'
(corresponding to the absence of correlation decay mentioned above),
but Proposition \ref{prop:MG} allows to select the 'valid' fixed-point
which gives the mean degree of a vertex 'picked at random' in a
'maximum' spanning subgraph of a possibly infinite graph. The notion
of unimodularity is a key concept in making these statements
rigorous. This is the price to pay in order to work directly with the
infinite, probabilistic object obtained as the limit of our finite
problem as $n$ tends to infinity. The reward of this objective method
\cite{alst04} is that in our case, this infinite object with fixed
distributional properties is simple to analyse (see Section
\ref{sec:comp}) and captures all the necessary information on the
asymptotic behaviour of the original sequence in order to compute the
limit (\ref{eq:limM}).

\section{Analysis of Gibbs measures}\label{sec:gibbs}

This section is devoted to the computation of the limit appearing on the right-hand side of (\ref{eq:limM}). We first deal with the particular case where $G$ is a star (i.e. a tree with one internal node and several leaves) then we show that the computation can be done recursively on a finite tree. We also show how these recursions can be extended to possibly infinite unimodular networks (defined in the sequel).

\subsection{Local operators}

In this section, we define local operators associated to the degree constraints defining an admissible spanning subgraph. In particular, the set $E$ used in this section will be interpreted later as the set of edges incident to a given vertex.

Let $E$ be a finite set of elements $e$ and $\Yb=(Y_e,\:e\in E)\in [0,\infty)^E$. We define a probability measure $\mu$ on $\{0,1\}^E$ as follows: the binary random variables $B_e$ with $e\in E$ are independent Bernoulli random variables with $\mu(B_e=1) =\frac{Y_e}{1+Y_e}\in [0,1)$. For a given integer $w$, we will be mainly interested in the measure obtained from $\mu$ by conditioning on $\sum_{e\in E}B_e\leq w$ and we denote it by $\omu$. 
%When there is more than one set $E$ and integer $w$, we denote this measure by $\omu_{[E,w]}$ to avoid ambiguity. This notation will be usefull in the sequel where we allow $E$ and $w$ to vary but since these parameters are fixed in this section, we simply use the short notation $\omu$.
Simple calculations show that for any $(\eta(e),\:e\in E)\in \{0,1\}^E$ such that $\sum_{e\in E}\eta(e)\leq w$, we have
\BEAS
\omu\left(B_e=\eta(e):\: \forall e\in E\right):=\mu\left( B_e=\eta(e):\: \forall e\in E \:\middle\vert\: \sum_{e\in E}B_e\leq w\right) &=& \frac{\prod_{e\in E}Y_e^{\eta(e)}}{\sum_{S\subset E,|S|\leq w} \Yb^S},
\EEAS
where for any set $S\subset E$, $\Yb^S$ is a convenient notation for $\Yb^S:=\prod_{e\in S}Y_e$ and $\Yb^\emptyset =1$ and $|S|$ is the number of elements in $S$. Note that with $Y_e=z$ for all $e$, we recover (\ref{eq:gib}) for a simple star graph, where all edges have exactly one node in common with associated degree constraint $w$ and all other nodes have constraints larger than one.

For a given $e\in E$, we introduce the following notations:
\BEAS
\omu\left(B_e=1\right)=\mu\left(B_e=1\:\middle\vert\: \sum_{e\in E}B_e\leq w\right)%&:=&\mu\left(X(e)=1\:\middle\vert\: \sum_{e\in E}X_e\leq b\right)\\
&=&\frac{Y_e \Rcal_e(\Yb)}{1+Y_e \Rcal_e(\Yb)},
\EEAS
where (we use the notation $E\backslash e$ for $E\backslash \{e\}$):
\BEA
\label{eq:defRe}
\Rcal_e(\Yb) := \frac{\sum_{S\subset E\backslash e,\:|S|\leq w-1} \Yb^S}{\sum_{S\subset E\backslash e,\:|S|\leq w} \Yb^S}\leq 1.
\EEA
Note in particular that the function $\Yb\mapsto\Rcal_e(\Yb)$ depends only on the components $Y_\ell$ with $\ell\neq e$.

We also define
\BEAS
\Dcal(\Yb) := \sum_{e\in E}\mu\left(B_e=1\:\middle\vert\: \sum_{e\in E}B_e\leq w\right)=\sum_{e\in E}\frac{Y_e \Rcal_e(\Yb)}{1+Y_e \Rcal_e(\Yb)}.
\EEAS 
With the interpretation given above of the star graph with all $Y_e=z$, $\Dcal(\Yb)$ is simply the mean degree of the central node under the probability distribution (\ref{eq:gib}).

The following proposition shows crucial monotonicity properties of the functions introduced above. It follows the argument in \cite{salez} (see Lemma 1 and 2) using results from the theory of negative dependence in \cite{pe00}.

\begin{proposition}\label{prop:localmon}
The mapping $\Rcal_e:[0,\infty)^{E\bac e}\to (0,1]$ is non-increasing in each variable.
The mapping $\Dcal:[0,\infty)^E\to [0,|E|)$ is strictly increasing in each variable.
The mapping $z\in [0,\infty)\mapsto z\Rcal_e(z\Xb)$ (where $z\Xb$ denotes the vector with entries $zX_e$) is strictly increasing if $\bX>0$.
\end{proposition}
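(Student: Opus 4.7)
\emph{Approach.} The three parts follow from elementary algebra on ESPs (elementary symmetric polynomials) combined with Newton's inequalities and, for the strict monotonicity claims, a conditioning/Chebyshev argument. Throughout I write $\sigma_j^{(S)}(\bY)$ for the $j$-th elementary symmetric polynomial of $(Y_g)_{g \in E\bac S}$ (with $\sigma_{-1} := 0$), so that the definition (\ref{eq:defRe}) reads $\Rcal_e(\bY) = \sum_{j\leq w-1}\sigma_j^{(e)}(\bY)\big/\sum_{j\leq w}\sigma_j^{(e)}(\bY)$.

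\emph{Part 1.} Fix $f \in E\bac e$ and split each ESP according to whether $f\in U$: $\sigma_j^{(e)}(\bY) = \alpha_j + Y_f \alpha_{j-1}$ with $\alpha_j := \sigma_j^{(\{e,f\})}(\bY)$ free of $Y_f$. Introducing $P = \sum_{j=0}^{w-1}\alpha_j$ and $Q = \sum_{j=0}^{w-2}\alpha_j$ gives
\BEAS
\Rcal_e \;=\; \frac{P + Y_f Q}{(P+\alpha_w) + Y_f(Q+\alpha_{w-1})},
\EEAS
so $\partial\Rcal_e/\partial Y_f$ has the sign of $Q\alpha_w - P\alpha_{w-1}$, and it suffices to show $P\alpha_{w-1}\ge Q\alpha_w$. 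Newton's inequalities make $(\alpha_j)$ log-concave, hence $\alpha_{j+1}/\alpha_j$ is non-increasing in $j$; consequently $\alpha_j\alpha_{w-1}\ge \alpha_{j-1}\alpha_w$ for every $j\in\{1,\ldots,w-1\}$, and summing these together with $\alpha_0\alpha_{w-1}\ge 0$ yields the claim.

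\emph{Part 2.} Interpret $\Dcal(\bY)=\EE_{\omu}[T]$ with $T = \sum_e B_e$, and apply the log-derivative identity $\partial \Dcal/\partial \log Y_f = \text{Cov}_{\omu}(T, B_f)$; the plan is to prove this covariance strictly positive by conditioning on $T$. A direct calculation gives
\BEAS
g(k) \;:=\; \omu(B_f=1\mid T=k) \;=\; \frac{Y_f\,\sigma_{k-1}^{(f)}(\bY)}{\sigma_k^{(f)}(\bY)+Y_f\,\sigma_{k-1}^{(f)}(\bY)},
\EEAS
which, by Newton's inequalities (strict for positive variables), is a strictly increasing function of $k$ on $\{0,\ldots,w\}$. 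The tower property then yields $\text{Cov}_{\omu}(T,B_f) = \text{Cov}_{\omu}(T, g(T))$, and Chebyshev's sum inequality applied to the two strictly increasing sequences $k\mapsto k$ and $k\mapsto g(k)$ delivers strict positivity, since $T$ has non-degenerate law under $\omu$ (both $\omu(T=0)$ and $\omu(T=1)$ are positive whenever $\bY \not\equiv 0$; the case with some $Y_g=0$ reduces to the positive setting on the support of $\bY$, and strictness at $Y_f = 0$ follows from the analyticity of $\Dcal$ in $Y_f$).

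\emph{Part 3.} Set $Y_e = z$ and $Y_g = zX_g$ for $g\neq e$ (noting that $X_e$ does not enter $\Rcal_e$). The identity $\omu_z(B_e=1) = Y_e\Rcal_e(\bY)/(1+Y_e\Rcal_e(\bY))$ rearranges to $z\Rcal_e(z\bX) = \omu_z(B_e=1)/(1-\omu_z(B_e=1))$; since $p\mapsto p/(1-p)$ is a strictly increasing bijection of $[0,1)$ onto $[0,\infty)$, strict monotonicity of $z\Rcal_e(z\bX)$ in $z$ reduces to strict monotonicity of $z\mapsto \omu_z(B_e=1)$. Using $d/d\log z = \sum_g d/d\log Y_g$ in the covariance formula of Part~2 yields
\BEAS
\frac{d}{d\log z}\omu_z(B_e=1) \;=\; \text{Cov}_{\omu_z}(T, B_e) \;=\; \text{Cov}_{\omu_z}(T, g_e(T)),
\EEAS
where $g_e(k) = a_{k-1}/(a_k + a_{k-1})$ with $a_j$ the $j$-th ESP of $(X_g)_{g\neq e}$; strict log-concavity of $(a_j)$ under $\bX>0$ makes $g_e$ strictly increasing, and Chebyshev again concludes. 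The main obstacle throughout is Part~2: the natural decomposition $\Dcal=\sum_e\omu(B_e=1)$ has summands with $e\neq f$ that are already non-increasing in $Y_f$ by Part~1, so no term-by-term sign check can succeed; the conditioning on $T$ rescues the argument by collapsing the covariance onto an unambiguously-signed Chebyshev comparison between two monotone functions of $T$.
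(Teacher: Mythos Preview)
Your proof is correct and, while it lands on the same covariance quantity $\mathrm{Cov}_{\omu}(T,B_f)$ as the paper does for Parts~2 and~3, you establish the required signs by a genuinely different and more elementary route. The paper imports Pemantle's negative-dependence theory (Theorem~2.7 of \cite{pe00}) as a black box: negative correlation of $\omu$ gives Part~1 directly, and negative correlation of the fixed-size conditionings $\omu_k$ is used to show that $k\mapsto\omu(B_e=1\mid T=k)$ is non-decreasing, from which the positivity of $\mathrm{Cov}_{\omu}(T,B_f)$ is extracted by a short chain of inequalities. You replace all of this with explicit ESP algebra plus Newton's inequalities: for Part~1 you write $\Rcal_e$ as a M\"obius-type ratio in $Y_f$ and reduce to $P\alpha_{w-1}\ge Q\alpha_w$, which follows termwise from log-concavity of $(\alpha_j)$; for Parts~2 and~3 you compute $g(k)=\omu(B_f=1\mid T=k)$ in closed form and read off strict monotonicity from strict log-concavity of the unnormalised ESPs, then finish with Chebyshev. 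The payoff of your approach is self-containment (no dependence on \cite{pe00}) and a clean handle on \emph{strict} monotonicity, which the paper has to recover separately; the payoff of the paper's approach is that it sits inside a general negative-dependence framework and would transfer to other constraint families for which ESP formulas are unavailable. One small cosmetic point: your ``analyticity at $Y_f=0$'' remark is unnecessary, since $\partial\Dcal/\partial Y_f>0$ on $(0,\infty)$ already forces strict monotonicity on $[0,\infty)$ by integration.
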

\begin{proof}
From Theorem 2.7 in \cite{pe00}, we know that the measure $\omu$ is negatively correlated: for $e\neq f$,
\BEAS
\omu(B_e=1, B_f=1)\leq \omu(B_e=1)\omu(B_f=1).
\EEAS
This result directly gives the first point of the proposition since we have:
\BEAS
\frac{\d \Rcal_e(\Yb)}{\d Y_f} &=& \frac{\sum_{S\subset E\backslash\{e,f\},\:|S|\leq w-2} Y^S}{\sum_{S\subset E\backslash\{e\},\:|S|\leq w} Y^S}-\frac{\left(\sum_{S\subset E\backslash\{e\},\:|S|\leq w-1} Y^S\right)\left(\sum_{S\subset E\backslash\{e,f\},\:|S|\leq w-1} Y^S\right)}{\left(\sum_{S\subset E\backslash\{e\},\:|S|\leq w} Y^S\right)^2}\\
&=& \frac{\omu(B_e=0)\omu(B_e=1,B_f=1)-\omu(B_e=1)\omu(B_e=0,B_f=1)}{Y_eY_f\omu(B_e=0)^2}\\
&=& \frac{\omu(B_e=1,B_f=1)-\omu(B_e=1)\omu(B_f=1)}{Y_eY_f\omu(B_e=0)^2}\leq 0.
\EEAS
For the second point, we compute the derivative of $\Dcal$ as follows:
\BEAS
\frac{\d \Dcal(\Yb)}{\d Y_f} &=& \frac{\Rcal_f(\Yb)}{(1+Y_f\Rcal_f)^2}+\sum_{e\in E\backslash f}\frac{Y_e}{(1+Y_e\Rcal_e(\Yb))^2}\frac{\d \Rcal_e}{\d Y_f}(\Yb)\\
&=& \frac{1}{Y_f}\sum_{e\in E}\left(  \omu(B_e=1,B_f=1) - \omu(B_e=1)\omu(B_f=1)\right).
\EEAS
We need to prove that this last quantity is positive.
Let $\omu_k$ denote the law $\mu$ conditioned on $\sum_{e\in
  E}B_e=k$. By Theorem 2.7 in \cite{pe00}, this measure is still
negative correlated so that for $e\neq f$, we have:
\BEAS
\omu_k(B_e=1,B_f=0) = \omu_k(B_e=1)-\omu_k(B_e=1,B_f=1)\geq \omu_k(B_e=1)\omu_k(B_f=0).
\EEAS
In particular, we get
\BEAS
\omu_k(B_e=1|B_f=0)\geq \omu_k(B_e=1)\geq \omu_k(B_e=1|B_f=1).
\EEAS
Add an extra variable $B_{f^*}$, so that $\omu_k(B_e=1|B_{f^*}=0)=\mu\left(B_e=1\middle\vert \sum_{f\in E} B_f=k\right)$ and $\omu_k(B_e=1|B_{f^*}=1)=\mu\left(B_e=1\middle\vert \sum_{f\in E} B_f=k-1\right)$. Hence we proved
\BEAS
\mu\left(B_e=1\middle\vert \sum_{f\in E} B_f=k\right)\geq\mu\left(B_e=1\middle\vert \sum_{f\in E} B_f=k-1\right).
\EEAS 
Denoting $a_k=\mu\left(B_e=1, \sum_{f\in E} B_f=k\right)$ and
$b_k=\mu\left(\sum_{f\in E} B_f=k\right)$, previous inequality shows that $\frac{a_k}{b_k}$ is non-decreasing in $k$ so that for $1\leq k\leq w$, we have $\frac{\sum_{j\leq k-1}a_j}{\sum_{j\leq k-1}b_j}\leq \frac{a_k}{b_k}\leq \frac{\sum_{w\geq j\geq k}a_j}{\sum_{w\geq j\geq k}b_j}$ which gives
\BEAS
\omu\left(B_e=1\middle\vert \sum_{f\in E} B_f\leq k-1\right)\leq \omu\left(B_e=1\middle\vert \sum_{f\in E} B_f\geq k\right),
\EEAS
so that we get 
\BEAS
\omu\left(B_e=1, \: \sum_{f\in E} B_f\geq k\right)\left(1-\omu\left( \sum_{f\in E} B_f\geq k\right)\right)&\geq&\omu\left(\sum_{f\in E} B_f\geq k\right)\omu\left(B_e=1,\: \sum_{f\in E} B_f\leq k-1\right)\\
\omu\left(B_e=1, \: \sum_{f\in E} B_f\geq k\right)&\geq&
\omu\left(\sum_{f\in E} B_f\geq k\right)\omu\left( B_e=1\right)\\
\omu\left(B_e=1, \: \sum_{f\in E} B_f\geq k\right)\left(1-\omu\left( B_e=1\right)\right)&\geq&
\omu\left(\sum_{f\in E} B_f\geq k, \:B_e=0\right)\omu\left( B_e=1\right)\\
\omu\left( \sum_{f\in E} B_f\geq k\middle\vert B_e=1\right)&\geq&\omu\left( \sum_{f\in E} B_f\geq k\middle\vert B_e=0\right),
\EEAS
in particular, we get $\omu\left( \sum_{f\in E} B_f\geq k\middle\vert B_e=1\right)\geq\omu\left( \sum_{f\in E} B_f\geq k\right)$. Note that for $k=0$, the inequality is strict so that summing over $k$, we get
\BEAS
\sum_{f\in E}\omu(B_f=1\vert B_e=1)>  \sum_{f\in E} \omu(B_f=1),
\EEAS
which concludes the proof for $\Dcal$.
Finally, we compute:
\BEAS
\frac{\d z\Rcal_e(z\Xb)}{\d z} &=& \Rcal_e(z\Xb)+z\sum_{f\in E\backslash e}X_f\frac{\d \Rcal_e(z\Xb)}{\d X_f}\\
&=& \frac{\omu^z(B_e=1)}{z X_e\omu^z(B_e=0)}+\sum_{f\in E\backslash e}\frac{\omu^z(B_e=1,B_f=1) - \omu^z(B_e=1)\omu^z(B_f=1)}{zX_e\omu^z(B_e=0)^2}
\EEAS
where $\omu^z$ is the conditioned measure obtained with parameter $\Yb=z\Xb$.
Then, we have
\BEAS
\frac{\d z\Rcal_e(z\Xb)}{\d z} &=&\frac{1}{z X_e\omu^z(B_e=0)^2}\left( \sum_{f\in E}\omu^z(B_f=1,B_e=1)- \omu^z(B_f=1)\omu^z(B_e=1)\right).
\EEAS
It follows again from Theorem 2.7 in \cite{pe00} that $\omu^z$ is still negatively correlated (stability under external fields) so that previous calculation is still valid and this concludes the proof.
\end{proof}

In what follows, we will need to consider the extension of previous mappings in order to cover the case $Y_e=\infty$. We now consider $\Yb\in [0,\infty]^E$.
Let $E'(\Yb)\subset E$ be the set of elements $e$ such that $Y_e=\infty$. 
%It is not possible to make the mapping $\Yb\in[0,\infty]^E\to \omu(.)\in \Pcal[0,1]$ continuous, where $\Pcal[0,1]$ is the space of probability measures on $[0,1]$ equiped with the weak topology \cite{billingsley}. However, w
We now show that it is possible to extend continuously the mappings $\Rcal_e$ and $\Dcal$. To start, it is easy to check that for any extension of $\omu(.)$, we must have:
\begin{itemize}
\item if $|E'(\Yb)|\geq w$, then $\sum_{e\in E'(\Yb)}\omu\left( B_e=1\right)=w$, so that $\forall e\notin E'(\Yb)$, we have for any $Y_e\geq 0$, $\omu(B_e=0)=1$;
\item if $|E'(\Yb)|< w$, then for any $Y_e\geq 0$ with $e\notin E'(\Yb)$, we have for any $(\eta(e),\:e\in E)\in\{0,1\}^E$,
\BEA
\label{eq:omuw}\omu\left( B_e=\eta(e):\: e\in E \right) &=&
\ind\left(\prod_{e\in E'(\Yb)}\eta(e)=1 \right)\frac{\prod_{e\in E\backslash E'(\Yb)}Y_e^{\eta(e)}}{\sum_{S\subset E\backslash E'(\Yb),|S|\leq w-|E'(\Yb)|} \Yb^S}.
\EEA
\end{itemize}

This simple fact allows to extend the mapping $\Rcal_e$ as follows:
\begin{lemma}
We define the mapping $\Rcal_e:[0,\infty]^{E\bac e}\to [0,1]$ defined by (\ref{eq:defRe}) on $[0,\infty)^E$ and on $[0,\infty]^{E \bac e}\bac [0,\infty)^{E\bac e}$, we let $E'_e(\Yb)=\{f\in E\bac e,\: Y_f=\infty\}$ and define $\Rcal_e$ by
\begin{itemize}
\item if $|E'_e(\Yb)|< w$, then
\BEAS
\Rcal_e(\Yb) := \frac{\sum_{S\subset E\bac E'_e(\Yb)\cup\{e\},\:|S|\leq w-|E'_e(\Yb)|-1} \Yb^S}{\sum_{S\subset E\bac E'_e(\Yb)\cup\{e\},\:|S|\leq w-|E'_e(\Yb)|} \Yb^S}\in (0, 1].
\EEAS
\item if $|E'_e(\Yb)|\geq w$, then $\Rcal_e(\Yb)=0$.
\end{itemize}
In particular, we have
\BEA
\label{eq:Rcal0}\Rcal_e(\Yb) =0 &\Leftrightarrow& \sum_{f\in E\bac e}\ind(Y_f=\infty)\geq w.
\EEA
The mapping $\Rcal_e$ is continuous.
\end{lemma}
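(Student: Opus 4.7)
I would first reparametrize via the homeomorphism $[0,\infty]\to[0,1]$, $Y\mapsto p:=Y/(1+Y)$, under which
\BEAS
\Rcal_e(\Yb) = \frac{\PP(\tilde{S}\leq w-1)}{\PP(\tilde{S}\leq w)},\qquad \tilde{S}:=\sum_{f\in E\bac e}B_f,
\EEAS
where the $B_f$ are independent Bernoulli$(p_f)$ variables, with the convention $B_f=1$ almost surely whenever $p_f=1$ (i.e.\ $Y_f=\infty$). Both the numerator and the denominator are polynomials in the $p_f$'s, hence continuous on the compact cube $[0,1]^{E\bac e}$. A direct expansion, conditioning on the forced $B_f=1$ for $f\in E'_e(\Yb)$, recovers the explicit formula of the lemma in the case $|E'_e(\Yb)|<w$, so this probabilistic representation is a bona fide extension of the original definition (\ref{eq:defRe}).

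The equivalence (\ref{eq:Rcal0}) is then immediate: since $\tilde{S}\geq|E'_e(\Yb)|$ almost surely and this lower bound is attained with positive probability (namely when all non-forced $B_f$ vanish), one has $\PP(\tilde{S}\leq w-1)=0$ iff $|E'_e(\Yb)|\geq w$, while $\PP(\tilde{S}\leq w)=0$ iff $|E'_e(\Yb)|>w$. Hence $\Rcal_e$ vanishes exactly on the set $\{|E'_e(\Yb)|\geq w\}$, in agreement with the piecewise definition.

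For continuity, the plan is to split according to the value of $|E'_e(\Yb)|$. On the open region $\{|E'_e(\Yb)|\leq w-1\}$ both polynomials are strictly positive and continuous, so their ratio is continuous. On $\{|E'_e(\Yb)|=w\}$ the numerator vanishes continuously while the denominator stays bounded away from $0$, so $\Rcal_e$ is again continuous and equals $0$. The only delicate case is $\{|E'_e(\Yb)|>w\}$, where both polynomials vanish and a rate comparison is required. Given $\Yb^{(n)}\to\Yb$ in $[0,\infty]^{E\bac e}$, the inclusion $E'_e(\Yb^{(n)})\subset E'_e(\Yb)$ holds eventually; I would set $\Delta_n:=E'_e(\Yb)\bac E'_e(\Yb^{(n)})$ and $\epsilon_n:=\max_{f\in\Delta_n}(1+Y_f^{(n)})^{-1}\to 0$. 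Tracking the dominant monomials in the $p_f$-expansion, the leading exponent of $\epsilon_n$ is $|E'_e(\Yb)|-w$ in the denominator and $|E'_e(\Yb)|-w+1$ in the numerator (the extra factor of $\epsilon_n$ coming from the size constraint being tighter by one), so the ratio is $O(\epsilon_n)\to 0$, matching $\Rcal_e(\Yb)=0$. The main obstacle is the bookkeeping in this last case, but the polynomial representation in the $p_f$'s makes the identification of the leading-order terms transparent.
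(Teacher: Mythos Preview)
Your reparametrization $p_f=Y_f/(1+Y_f)$ and the representation $\Rcal_e=\PP(\tilde S\le w-1)/\PP(\tilde S\le w)$ is a genuinely different route from the paper's. The paper instead works through the conditional measure $\omu$: it first identifies what $\omu(B_e=0)$ must be at boundary points (the two bullet points preceding the lemma) and then reads off $\Rcal_e$ from the identity $\omu(B_e=0)=1/(1+Y_e\Rcal_e(\Yb))$. Your approach is arguably more transparent for continuity, since it puts everything on the compact cube $[0,1]^{E\setminus e}$ as a ratio of explicit polynomials; the paper's argument, by contrast, does not really spell out the continuity in the regime $|E'_e(\Yb)|>w$ beyond the identification of the limit.

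That said, your rate comparison in the case $|E'_e(\Yb)|>w$ has a real gap. You assert that the denominator has ``leading exponent $|E'_e(\Yb)|-w$ in $\epsilon_n$'' and conclude $N/D=O(\epsilon_n)$. The upper bound $N\le C\,\epsilon_n^{\,|E'_e|-w+1}$ is fine (each term in $N$ carries at least $|E'_e|-w+1$ factors $\epsilon_f\le\epsilon_n$), but the matching \emph{lower} bound on $D$ fails when the $\epsilon_f$ tend to zero at disparate rates. Take $w=1$, $E\setminus e=\{1,2,3\}$, all $Y_f^*=\infty$, and approach along $\epsilon_1=t$, $\epsilon_2=t^2$, $\epsilon_3=t^3$: then $D=\PP(\tilde S\le 1)\sim t^3$, not $t^2=\epsilon_n^{\,|E'_e|-w}$. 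The desired conclusion $N/D\to 0$ still holds here, but not via a single scalar $\epsilon_n$.

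A clean repair uses the monotonicity already proved in Proposition~\ref{prop:localmon}. Fix any $(w{+}1)$-subset $F\subset E'_e(\Yb)$ and let $\hat\Yb^{(n)}$ agree with $\Yb^{(n)}$ on $F$ and vanish off $F$; since $\hat\Yb^{(n)}\le\Yb^{(n)}$ componentwise and $\Rcal_e$ is non-increasing, $\Rcal_e(\Yb^{(n)})\le\Rcal_e(\hat\Yb^{(n)})$. For $\hat\Yb^{(n)}$ only the $w{+}1$ Bernoulli variables indexed by $F$ are active, and one has directly
\[
\Rcal_e(\hat\Yb^{(n)})=\frac{\PP(\text{at least two zeros in }F)}{\PP(\text{at least one zero in }F)}
\le\frac{\sum_{f<g\in F}\epsilon_f\epsilon_g}{\bigl(\min_{g\in F}p_g\bigr)^{w}\sum_{f\in F}\epsilon_f}
\le\frac{\sum_{f\in F}\epsilon_f}{2\bigl(\min_{g\in F}p_g\bigr)^{w}}\longrightarrow 0,
\]
which closes the argument. (Along subsequences where some $Y_f^{(n)}=\infty$, either $|E'_e(\Yb^{(n)})|\ge w$ and $\Rcal_e(\Yb^{(n)})=0$ outright, or one first absorbs those indices into the constraint and reduces to the all-finite situation above.)
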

\begin{proof}
If $|E'_e(\Yb)|\geq w$, then we have for all $Y_e>0$, $\omu(B_e=0) = \frac{1}{1+Y_e\Rcal_e(\Yb)} =1$ so that $\Rcal_e(\Yb)=0$.

If $|E'_e(\Yb)|< w$, then from (\ref{eq:omuw}) we have for all $Y_e\in(0,\infty)$,
\BEAS
\omu(B_e=0) &=&\sum_{U\subset E\bac E'(\Yb),\:|U|\leq w-|E'(\Yb)|, \:e\notin U}\frac{\Yb^U}{\sum_{S\subset E\bac E'(\Yb),\:|S|\leq w-|E'(\Yb)|} \Yb^S}\\
&=& \frac{\sum_{S\subset E\bac E'_e(\Yb)\cup\{e\},\:|S|\leq w-|E'_e(\Yb)|} \Yb^S}
{\sum_{S\subset E\bac E'_e(\Yb)\cup\{e\},\:|S|\leq w-|E'_e(\Yb)|} \Yb^S+Y_e\sum_{S\subset E\bac E'_e(\Yb)\cup\{e\},\:|S|\leq w-|E'_e(\Yb)|-1} \Yb^S}.
\EEAS
And the lemma follows since $\omu(B_e=0) = \frac{1}{1+Y_e\Rcal_e(\Yb)}$.
\end{proof}

It is also possible to extend the mapping $\Dcal$ as follows:
\begin{lemma}\label{lem:Dcal}
The mapping $\Dcal:[0,\infty]^E\to [0,|E|]$ defined by
\BEAS
\Dcal(\Yb) &=& \sum_{e\in E}\frac{Y_e\Rcal_e(\Yb)}{1+Y_e\Rcal_e(\Yb)}\ind(Y_e<\infty) +w\wedge \sum_{e\in E} \ind\left( Y_e=\infty\right),
\EEAS
is continuous.
\end{lemma}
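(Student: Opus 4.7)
The plan is to split $[0,\infty]^E$ into the interior $[0,\infty)^E$ and the boundary (points with at least one infinite coordinate) and prove continuity on each. On the interior the indicator in the formula for $\Dcal$ is identically $1$ and the cap term vanishes, so $\Dcal(\Yb) = \sum_e \frac{Y_e\Rcal_e(\Yb)}{1+Y_e\Rcal_e(\Yb)}$, whose continuity follows immediately from the continuity of each $\Rcal_e$ established in the preceding lemma.

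At a boundary point $\Yb_0$ with $E' := E'(\Yb_0) \neq \emptyset$, write $r := |E'|$, $F := E\bac E'$ and take a sequence $\Yb^{(n)}\to\Yb_0$. It is enough to treat $\Yb^{(n)}\in[0,\infty)^E$; the case where some coordinates equal $\infty$ reduces to this after extracting a subsequence along which $\{e: Y_e^{(n)}=\infty\}\subseteq E'$ is constant and splitting the sum accordingly. For such $\Yb^{(n)}$ one has $\Dcal(\Yb^{(n)}) = \EE_{\omu^{(n)}}[\sum_e B_e]$, which expands as
\BEAS
\Dcal(\Yb^{(n)}) = \frac{\sum_{i+j\leq w}(i+j)\,a_j(n)\,b_i(n)}{\sum_{i+j\leq w} a_j(n)\,b_i(n)},
\EEAS
where $a_j(n) := \sum_{S\subset E',\,|S|=j}\prod_{e\in S}Y_e^{(n)}$ and $b_i(n) := \sum_{S\subset F,\,|S|=i}\prod_{e\in S}Y_e^{(n)}$.

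The core step is the asymptotics of these quantities. The $b_i(n)$ clearly converge to finite limits $b_i$. For the $a_j$, set $p:=\min(w,r)$ and $M_n:=\min_{e\in E'}Y_e^{(n)}\to\infty$. The identity
\BEAS
(j+1)\,a_{j+1}(n) \;=\; \sum_{e_0\in E'} Y_{e_0}^{(n)}\, a_j^{(e_0)}(n) \;\geq\; (r-j)\,M_n\, a_j(n),
\EEAS
with $a_j^{(e_0)}(n) := \sum_{S\subset E'\bac\{e_0\},\,|S|=j}\prod_{e\in S}Y_e^{(n)}$ and the last inequality using $\sum_{e_0\in E'}a_j^{(e_0)}(n) = (r-j)\,a_j(n)$, yields $a_j(n)/a_p(n)\to 0$ for every $j<p$. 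Dividing numerator and denominator of the expression for $\Dcal(\Yb^{(n)})$ by $a_p(n)$, only the $j=p$ terms survive in the limit.

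Two cases remain. If $r\geq w$, then $p=w$, only $(i,j)=(0,w)$ contributes, and $\Dcal(\Yb^{(n)})\to w = w\wedge r$, matching $\Dcal(\Yb_0)$ because $\Rcal_e(\Yb_0) = 0$ for every $e\notin E'$ by (\ref{eq:Rcal0}). If $r<w$, then $p=r$ and the limit equals $r + \frac{\sum_{i=0}^{w-r} i\,b_i}{\sum_{i=0}^{w-r} b_i}$; this is $r$ plus the mean size under the measure on subsets of $F$ with constraint $w-r$, which by the extended definition of $\Rcal_e$ equals $r+\sum_{e\notin E'}\frac{Y_e\Rcal_e(\Yb_0)}{1+Y_e\Rcal_e(\Yb_0)} = \Dcal(\Yb_0)$. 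The principal obstacle is the heterogeneous divergence rates of the $Y_e^{(n)}$, $e\in E'$; the recursive lower bound $a_{j+1}(n)\geq \frac{(r-j)M_n}{j+1}a_j(n)$ depends only on the slowest rate $M_n$ and so handles this uniformly.
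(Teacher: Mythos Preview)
Your proof is correct, but the route differs from the paper's. The paper gives a very short argument that leans on the continuity of each $\Rcal_e$ already established in the preceding lemma: since each term $\frac{Y_e\Rcal_e(\Yb)}{1+Y_e\Rcal_e(\Yb)}$ is a continuous function of $(Y_e,\Rcal_e(\Yb))\in[0,\infty]\times[0,1]$ except at the single point $(\infty,0)$, the only obstruction to writing $\Dcal(\Yb)=\sum_e\frac{Y_e\Rcal_e(\Yb)}{1+Y_e\Rcal_e(\Yb)}$ as a manifestly continuous expression is the case where some $e$ has $Y_e=\infty$ and $\Rcal_e(\Yb)=0$. By (\ref{eq:Rcal0}) this forces $|E'(\Yb)|\geq w+1$, and the paper checks that there the formula in the statement and the un-indicated sum both equal $w=\sum_{e\in E'}\omu(B_e)$.

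You instead bypass the boundary continuity of $\Rcal_e$ entirely and compute the limit from the interior by hand, via the ratio representation $\Dcal(\Yb^{(n)})=\EE_{\omu^{(n)}}\!\big[\sum_e B_e\big]$ and the elementary-symmetric-polynomial asymptotics encoded in the bound $a_{j+1}(n)\geq \frac{(r-j)M_n}{j+1}a_j(n)$. This is more self-contained and makes the mechanism (only the top layer $j=p=\min(w,r)$ survives) completely explicit, at the cost of length. The paper's argument is shorter but relies on the framework built in the previous lemma.

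One small point: your reduction of the case where some $Y_e^{(n)}=\infty$ to the all-finite case (``extracting a subsequence along which $\{e:Y_e^{(n)}=\infty\}\subseteq E'$ is constant and splitting the sum accordingly'') is stated but not carried out. It is routine---if the constant set $E''$ has $|E''|\geq w$ one checks $\Dcal(\Yb^{(n)})=w=\Dcal(\Yb_0)$ directly from (\ref{eq:Rcal0}), and if $|E''|<w$ one reduces to your main computation on $E\setminus E''$ with constraint $w-|E''|$---but a line or two making this explicit would tighten the argument.
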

\begin{proof}
Since $\Rcal_e(\Yb)\in [0,1]$, we need only to deal with the case where there exists an element $e$ with $Y_e=\infty$ and $\Rcal_e(\Yb)=0$. In this case we have $\sum_{f\in E\bac e}\ind(Y_f=\infty)\geq w$, so that $E'(\Yb)=\{f\in E,\:Y_f=\infty\}$ is such that $|E'(\Yb)|\geq w+1$ so that $\Rcal_f(\Yb)=0$ for all $f\in E$. In particular, we have
\BEAS
\sum_{e\in E}\frac{Y_e\Rcal_e(\Yb)}{1+Y_e\Rcal_e(\Yb)}\ind(Y_e<\infty) &=& 0\mbox{ and,}\\
\Dcal(\Yb)=\sum_{e\in E}\frac{Y_e\Rcal_e(\Yb)}{1+Y_e\Rcal_e(\Yb)}&=&\sum_{e\in E}\frac{Y_e\Rcal_e(\Yb)}{1+Y_e\Rcal_e(\Yb)}\ind(Y_e=\infty)= \sum_{e\in E'(\Yb)}\omu(B_e)=w,
\EEAS
and the lemma follows.
\end{proof}

A third operator will be important in the sequel. By Proposition \ref{prop:localmon}, we can define for $\Xb\in(0,1]^E$:
\BEAS
\Qcal_e(\Xb) &=& \lim_{z\to \infty} \uparrow z\Rcal_e(z\Xb)= \frac{\sum_{S\subset E\backslash e,\:|S|= w-1} \Xb^S}{\sum_{S\subset E\backslash e,\:|S|= w} \Xb^S}.
\EEAS

Since the mapping $(z,\Xb)\in (0,\infty)\times (0,1]^E\mapsto z\Rcal_e(z\Xb)$ is non-decreasing in $z$ and non-increasing in $\Xb$, we can extend the mapping $\Qcal_e:[0,1]^E\to [0,\infty]$ continuously by,
\BEA
\label{eq:Qcal0}\Qcal_e(\Xb) = \infty &\Leftrightarrow& \sum_{f\in E\backslash e} \ind(X_f>0)<w.
\EEA

\begin{lemma}\label{lem:Q}
The mapping $\Qcal_e:[0,1]^E\to [0,\infty]$ is non-increasing in $
\Xb$ and for any $\Xb\in [0,1]^E$, we have
\BEAS
\sum_{e\in E}\frac{X_e\Qcal_e(\Xb)}{1+X_e\Qcal_e(\Xb)} \ind(\Qcal_e(\Xb)<\infty) &=& w\ind\left(\sum_{e\in E}\ind(X_e>0)\geq w+1 \right).
\EEAS
\end{lemma}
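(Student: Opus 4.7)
The plan is to treat the two assertions separately. Monotonicity of $\Qcal_e$ is inherited from the monotonicity of $z\Rcal_e(z\Xb)$ already recorded in Proposition \ref{prop:localmon} by passing to the limit $z\to\infty$. The sum identity splits into two transparent cases according to $n:=|\{e\in E:X_e>0\}|$, the nontrivial one reducing to a double-counting of $w$-subsets of $E$.

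For monotonicity, fix $\Xb\leq \Xb'$ componentwise in $[0,1]^E$. For $\Xb,\Xb'\in (0,1]^E$, Proposition \ref{prop:localmon} gives $\Rcal_e(z\Xb)\geq \Rcal_e(z\Xb')$ for every $z>0$, hence $z\Rcal_e(z\Xb)\geq z\Rcal_e(z\Xb')$. Since both sides are non-decreasing in $z$, the monotone limit $z\to\infty$ preserves the inequality and yields $\Qcal_e(\Xb)\geq \Qcal_e(\Xb')$. I would then extend this to the closure $[0,1]^E$ by the continuous extension of $\Qcal_e$ constructed just before the lemma, combined with (\ref{eq:Qcal0}) to identify where $\Qcal_e$ takes the value $\infty$.

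For the identity, suppose first $n\leq w$. Any $e$ with $X_e=0$ contributes $0$ to the left-hand sum; any $e$ with $X_e>0$ has $|\{f\in E\bac e:X_f>0\}|=n-1<w$, so (\ref{eq:Qcal0}) forces $\Qcal_e(\Xb)=\infty$ and the indicator kills the term. Both sides equal $0$. Now suppose $n\geq w+1$; then $|\{f\in E\bac e:X_f>0\}|\geq n-1\geq w$ for every $e$, so $\Qcal_e(\Xb)<\infty$ throughout. Splitting every $w$-subset $T\subset E$ according to whether it contains $e$ gives the identity
$$\sum_{S\subset E\bac e,\,|S|=w}\Xb^S + X_e\sum_{S\subset E\bac e,\,|S|=w-1}\Xb^S = \sum_{T\subset E,\,|T|=w}\Xb^T,$$
and substituting the explicit formula for $\Qcal_e(\Xb)$ yields
$$\frac{X_e\,\Qcal_e(\Xb)}{1+X_e\,\Qcal_e(\Xb)} = \frac{\sum_{T\subset E,\,|T|=w,\,e\in T}\Xb^T}{\sum_{T\subset E,\,|T|=w}\Xb^T}.$$
The denominator is nonzero since $n\geq w$. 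Summing over $e\in E$ and interchanging the sums gives $\frac{\sum_{|T|=w}|T|\,\Xb^T}{\sum_{|T|=w}\Xb^T}=w$, matching the right-hand side of the claim.

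The only mildly delicate point is the bookkeeping between the finite and infinite regimes of $\Qcal_e$, but it is handled uniformly by (\ref{eq:Qcal0}) and the case split on $n$ versus $w$; no step appears to pose a serious obstacle.
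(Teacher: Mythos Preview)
Your argument is correct and follows essentially the same route as the paper: monotonicity is inherited from Proposition \ref{prop:localmon} by passing to the limit $z\to\infty$, and the sum identity is handled by the same two-case split (governed by (\ref{eq:Qcal0})) together with the same double-counting of $w$-subsets. The only difference is cosmetic---you index the cases by $n=|\{e:X_e>0\}|$ while the paper phrases the dichotomy as ``for every $e$, $\Qcal_e(\Xb)=\infty$ or $X_e=0$'' versus its negation---but these are equivalent and the computations coincide.
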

\begin{proof}
If $\forall e$, either $\Qcal_e(\Xb)=\infty$, or $X_e=0$, then the left-hand side is 0 and either there exists $e$ such that $\Qcal_e(\Xb)=\infty$, so that by (\ref{eq:Qcal0}) the right-hand side also equals 0, or for all $e$, $\Qcal_e(\Xb)<\infty$ and $X_e=0$, in which case $\sum_{e\in E}\ind(X_e>0)=0$, so that the right-hand side also equals 0.
Conversely, if there exists $e$ such that $\Qcal_e(\Xb)<\infty$ and $X_e>0$, then we have $\sum_{f\in E}\ind(X_f>0)\geq w+1$ so that $\Qcal_f(\Xb)<\infty$ for all $f\in E$. Hence we have
\BEAS
\sum_{e\in E}\frac{X_e\Qcal_e(\Xb)}{1+X_e\Qcal_e(\Xb)} \ind(\Qcal_e(\Xb)<\infty) &=&\ind\left(\sum_{e\in E}\ind(X_e>0)\geq w+1 \right)\sum_{e\in E}\frac{X_e\Qcal_e(\Xb)}{1+X_e\Qcal_e(\Xb)}\\
&=& \ind\left(\sum_{e\in E}\ind(X_e>0)\geq w+1 \right)\sum_{e\in E}
\frac{\sum_{|S|= w, e\in S} \Xb^S}{\sum_{|S|= w} \Xb^S}\\
&=& w\ind\left(\sum_{e\in E}\ind(X_e>0)\geq w+1 \right).
\EEAS
\end{proof}

In view of (\ref{eq:Rcal0}) and (\ref{eq:Qcal0}), we now introduce a fourth important operator: for any $\bI\in \{0,1\}^E$ and any $e\in E$, we define
\BEA
\label{eq:defP}\Pcal_e(\bI) = \ind\left( \sum_{f\in E\bac e}I_f<w\right),\mbox{ where a sum over the empty set is equal to zero.}
\EEA
Clearly this operator is non-increasing in $\bI$.
The local operators defined in this section will be the building blocks of a global operator defined on the graph $G$. When the graph $G$ is a tree, there is a simple connection between this global operator and the distribution (\ref{eq:gib}) and we describe it in the next section. Before that, we introduce some notations. In the sequel, there will be more than one set of element $E$ and weight $w$ involved. Indeed a local operator will be associated to each vertex or (directed) edge of $G$. To avoid ambiguity, we denote explicitly the dependence in $E,w$ as follows:
$\omu^{(E,w)}, \Rcal_e^{(E,w)}, \Dcal^{(E,w)}, \Qcal_e^{(E,w)},\Pcal_e^{(E,w)}$.

\subsection{Finite graphs}

We now come back to the case where $G=(V,E)$ is a finite simple graph and $\bw=(w_v,\: v\in V)$ is a vector of constraints.
For any $i\in V$, recall that $\d i$ denotes the set of edges incident to $i$. We now define local operators with associated set of elements $\d i$, constraint $w_i$ and some weights $\bY$. Clearly each edge $(ij)\in E$ will appear in the local operators associated to $\d i$ and $\d j$ with possibly different weights. In order to distinguish between these two cases, we will add an orientation to each edge: for the operator with  constraint $w_i$ , the set of elements will be the oriented edges adjacent to $i$ and directed towards $i$. 
We introduce now some
notations. We denote by $\orE$ the set of oriented edges.
For $i,j\in V^2$ two adjacent nodes, we denote by $i\to j$ the
oriented edge from $i$ to $j$. 
We still denote by $\partial i$ the set of oriented edges towards $i$. It will be clear from the context if we consider oriented or non-oriented edges when we write $\d i$.
We also define $\partial i \backslash j$ as the set of oriented edges towards $i$ except $j\to i$.
For a vector $\bX\in \RR^E$ (or $\RR^{\orE}$) and a subset of edges $F\subset E$ (or oriented edges $F\subset \orE$), we denote by
$\bX[F]$ the vector restricted to $F$: $\bX[F]=(X_i,\:i\in F)$. 
%Similarly $G[F]$ is the edge-induced subgraph of $G$ by edges $F$.

If the graph $G$ is finite and acyclic, it is easy to see that the law of
each marginal $\bB[\d  i]$ under $\mu_G^z$ is exactly described by
previous local probability measures $\omu^{(\d i, w_i)}$, with constraint: $w_i$; set of elements: the oriented edges toward $i$, $\d i$; and for some parameters $(Y_{\ore}(z), \ore \in \d i)$ to be computed. 
We now describe how to compute these parameters: $\bY(z)=(Y_{\ore}(z),
\ore\in \orE) \in \RR^{\orE}$. We first define a global map $\Rcal: \bY\in\RR^{\orE} \mapsto  \bZ\in\RR^{\orE}$.
For a given oriented edge $j\to i$, we consider the local map $\Rcal^{(\d i,w_i)}_{j\to
  i}(.)$ as in previous section with set of elements $\d i$, and
constraint $w_i$ so that
\BEAS
\Rcal^{(\d i,w_i)}_{j\to i}(\Yb) = \frac{\sum_{S\subset \d i\backslash j,\:|S|\leq
    w_i-1} \Yb^S}{\sum_{S\subset \d i\backslash j,\:|S|\leq w_i} \Yb^S}.
\EEAS
In order to define $\bZ=\Rcal_G(\bY)$, we set $Z_{i\to j} = \Rcal^{(\d i,w_i)}_{j\to
  i}(\Yb)$. Note that the notations are consistent with previous section and $Z_{i\to j} = \Rcal^{(\d i,w_i)}_{j\to
  i}(\Yb)$ is a function of the $Y_{\ell \to i}$ for $\ell\neq j$ neighbour of $i$.
In order to lighten the notation, when no ambiguity is possible, we will simply denote $\Rcal^{(\d i,w_i)}_{j\to
  i}$ by $\Rcal_{j\to i}$:\\
% as shown on the following picture:\\
%$\d^t i\bac j\left\{
\begin{displaymath}
\xymatrix{
\ell_1 \ar[drr]^{Y_{\ell_1\to i}}&&&&&\\
\ell_2\ar[rr]^{Y_{\ell_2\to i}}&&i\ar[rrr]^-{Z_{i\to j} = \Rcal_{j\to i}(\Yb)}&&&j\\
\ell_3\ar[urr]_{Y_{\ell_3\to i}}&&&&&
}
%\right.$
\end{displaymath}

\begin{proposition}\label{prop:Recfinite}
On a finite acyclic graph $G$ and for any $z>0$, there is a unique solution to the
fixed point equation:
\BEA
\label{eq:fixpoint}\bY(z) = z\Rcal_G(\bY(z)).
\EEA
Moreover the marginal law of $\bB[\d i]$ under $\mu_G^z$ is given by the local probability measure $\omu^{(\d i, w_i)}$, with constraint $w_i$, set of elements $\d i$ and parameters $(Y_{\ore}(z), \ore \in \d i)$ being the restriction to coordinates in $\d i$ of the solution to (\ref{eq:fixpoint}).
\end{proposition}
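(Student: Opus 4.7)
Assume without loss of generality that $G$ is a finite tree (both the fixed-point system (\ref{eq:fixpoint}) and the marginals factor over connected components). The plan is to construct a solution to (\ref{eq:fixpoint}) by defining the messages explicitly via partition functions of subtrees, which will simultaneously yield the marginal identity; uniqueness will then be handled separately by a structural induction from the leaves.

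For each oriented edge $j\to i$, let $T_{j\to i}$ denote the connected component of $j$ in $G\setminus\{i\}$, and introduce
\BEAS
Z^{-}_{j\to i} &=& \sum_{\substack{\bB\text{ adm.\ on }T_{j\to i}\\ \sum_{e\in\d j\cap T_{j\to i}} B_e\leq w_j}} z^{\sum_e B_e},\\
Z^{+}_{j\to i} &=& \sum_{\substack{\bB\text{ adm.\ on }T_{j\to i}\\ \sum_{e\in\d j\cap T_{j\to i}} B_e\leq w_j - 1}} z^{\sum_e B_e},
\EEAS
setting $Y_{j\to i}(z) := z\, Z^{+}_{j\to i}/Z^{-}_{j\to i}$ (with the convention $Y_{j\to i}(z)=0$ when $w_j=0$). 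Decomposing according to the subset $S\subset\d j\bac i$ of edges adjacent to $j$ within $T_{j\to i}$ carrying $B_e=1$, and factoring the remaining admissibility constraints over the disjoint sub-subtrees $T_{\ell\to j}$, $\ell\in\d j\bac i$, yields
\BEAS
Z^{-}_{j\to i} &=& \Big(\prod_{\ell\in\d j\bac i} Z^{-}_{\ell\to j}\Big) \sum_{S\subset\d j\bac i,\: |S|\leq w_j} \bY(z)^S,\\
Z^{+}_{j\to i} &=& \Big(\prod_{\ell\in\d j\bac i} Z^{-}_{\ell\to j}\Big) \sum_{S\subset\d j\bac i,\: |S|\leq w_j-1} \bY(z)^S.
\EEAS
Taking the ratio gives $Y_{j\to i}(z) = z\,\Rcal^{(\d j,w_j)}_{i\to j}(\bY(z))$, which proves existence.

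For the marginal identity, I would root the tree at $i$. Conditional on $\bB[\d i]=\boldsymbol{\eta}$ with $\sum_{\ore\in\d i}\eta_\ore\leq w_i$, the admissibility constraints on the distinct subtrees $T_{j\to i}$ are independent, and the joint weight carried by the edge $(i,j)$ together with the subtree $T_{j\to i}$ is $z^{\eta_{j\to i}}Z^{\eta_{j\to i}}_{j\to i}$ (using the $\pm$ convention: $-$ for $\eta=0$, $+$ for $\eta=1$). Summing over $\boldsymbol\eta$ and using $z\,Z^{+}_{j\to i}=Y_{j\to i}(z)\,Z^{-}_{j\to i}$ gives $P_G(z)=\prod_{\ore\in\d i} Z^{-}_{\ore}\cdot\sum_{S\subset\d i,\,|S|\leq w_i}\bY(z)^S$. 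Dividing the joint weight by $P_G(z)$, the $Z^{-}_{j\to i}$ factors cancel and what remains is precisely $\omu^{(\d i,w_i)}(\boldsymbol\eta)$ with parameters $(Y_{\ore}(z))_{\ore\in\d i}$.

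Finally, uniqueness follows by induction on $|T_{i\to j}|$. Since $\Rcal^{(\d j,w_j)}_{i\to j}(\bY)$ depends only on the coordinates $Y_{\ell\to j}$ with $\ell\in\d j\bac i$, and each $T_{\ell\to j}$ is strictly smaller than $T_{i\to j}$, the value of $Y_{i\to j}$ for any solution is pinned down once the messages living on smaller subtrees are. The base case $|T_{i\to j}|=1$ (so $i$ is a leaf with sole neighbour $j$) forces $Y_{i\to j}\in\{0,z\}$ directly from the fixed-point equation. There is no conceptual obstacle; the only mild care required is in keeping the oriented-edge bookkeeping consistent through the subtree decomposition.
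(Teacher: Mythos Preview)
Your argument is correct and is precisely the standard cavity/belief-propagation derivation that the paper does not spell out but merely cites (``a standard result in the literature on graphical models \cite{mm09} (in this context see Lemma 3 in \cite{salez})''). Defining the messages as ratios of constrained subtree partition functions, verifying the recursion by one step of expansion at the boundary vertex, and reading off the marginal at $i$ from the factorisation $P_G(z)=\prod_{\ore\in\d i}Z^{-}_{\ore}\cdot\sum_{S\subset\d i,\,|S|\leq w_i}\bY(z)^S$ is exactly how those references proceed.

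One cosmetic point: in your uniqueness paragraph the indices drift. If you are doing induction on $|T_{i\to j}|$ to pin down $Y_{i\to j}$ (which your base case confirms: $i$ a leaf gives $Y_{i\to j}\in\{0,z\}$), then the relevant map is $\Rcal^{(\d i,w_i)}_{j\to i}$, which depends on $Y_{\ell\to i}$ for $\ell\in\d i\bac j$, and it is $T_{\ell\to i}\subsetneq T_{i\to j}$ that is strictly smaller. As written you have $\Rcal^{(\d j,w_j)}_{i\to j}$ and $T_{\ell\to j}$, which would be pinning down $Y_{j\to i}$ via subtrees disjoint from $T_{i\to j}$. The underlying induction is of course fine once the bookkeeping is made consistent.
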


The fact that the recursion (\ref{eq:fixpoint}) is exact on trees is a standard result in the literature on graphical models \cite{mm09} (in this context see Lemma 3 in \cite{salez}).
It follows directly from this proposition, that the mean degree of vertex $v\in V$ under $\mu_G^z$ defined by (\ref{eq:gib}) is given (when $G$ is a tree) by:
\BEA\label{eq:defD}
\Dcal_v(\bY(z)) = \sum_{\ore\in \d v}\frac{Y_{\ore}(z) \Rcal_{\ore}(\bY(z))}{1+Y_{\ore}(z) \Rcal_{\ore}(\bY(z))}.
\EEA

Recall that we are interested in the limit $z\uparrow \infty$ in which case, $\mu^z_G$ converges to the uniform distribution over maximum spanning subgraphs.
Iterating (\ref{eq:fixpoint}), we obtain $\bY(z) = z\Rcal_G(z\Rcal_G(\bY(z)))$. It is natural to define on $[0,1]^{\orE}$,
\BEAS
\Qcal_G(\Xb)&=&\lim_{z\to \infty} \uparrow z\Rcal_G(z\Xb),
\EEAS
which is well defined by the monotonicity properties shown in previous section. Then, $\Xb'=\Qcal_G(\Xb)$ is defined by:
\BEAS
X'_{i\to j}=\Qcal_{j\to i}(\Xb) &=&\frac{\sum_{S\subset \partial i \backslash j,\:|S|= w_i-1} \Xb^S}{\sum_{S\subset \partial i \backslash j,\:|S|= w_i} \Xb^S}.
\EEAS

\iffalse
We define $\bY^0(z)=0$ and for $k\geq 0$, $\bY^{k+1}(z)=z\Rcal_G(\bY^k(z))$. It is easy to see that $\lim_{k \to \infty}\uparrow \bY^{2k}(z)=\lim_{k \to \infty}\downarrow \bY^{2k+1}(z) = \bY(z)$ which is the solution to (\ref{eq:fixpoint}). Indeed the monotonicity properties are directly implied by Proposition \ref{prop:localmon}, the uniqueness is easily seen to hold for acyclic graph but actually true for general graphs as shown in \cite{salez}).
Moreover, note that for any $\ore\in \orE$, $Y^{1}_{\ore}(z)=z\Rcal_{\ore}(0)=z$, we now prove that for every $k\in \N$, $z\mapsto \frac{\bY^{k}(z)}{z}$ and $z\mapsto \bY^k(z)$ are respectively non-increasing and non-decreasing by induction on $k$. 
Consider $z\leq z'$; if $\bY^k(z)\leq \bY^k(z')$ then we have $\frac{\bY^{k+1}(z)}{z} \geq \frac{\bY^{k+1}(z')}{z'}$ since $\Rcal$ is non-increasing by Proposition \ref{prop:localmon}; if $\frac{\bY^{k}(z)}{z} \geq \frac{\bY^{k}(z')}{z'}$ then we have $\bY^{k+1}(z)\leq \bY^{k+1}(z')$ since $(z, \Xb)\mapsto z\Rcal(z\Xb)$ is increasing in $z$ and non-increasing in $\Xb$ still by Proposition \ref{prop:localmon}. 
\fi

As will be shown latter in a more general context, we have $ \lim_{z\to \infty}\uparrow \bY(z)=\bY \in [0,\infty]^{\orE}$ so that by passing to the limit in $\bY(z) = z\Rcal_G(z\Rcal_G(\bY(z)))$, we see that $\bY =\Qcal_G\circ \Rcal_G(\bY)$.

Recall that $\Pcal_e$ was defined in (\ref{eq:defP}) and we extend it to a global operator: $\Pcal_G:\{0,1\}^{\orE}\to \{0,1\}^{\orE}$ as follows:
\BEAS
\forall \bI\in \{0,1\}^{\orE}, \Pcal_{j\to i}(\bI) = \ind\left( \sum_{f\in \d i\bac j}I_f<w_i\right).
\EEAS

\begin{proposition}\label{prop:MGfinite}
Let $\bI\in \{0,1\}^{\orE}$ be defined by $I_{\ore}=\ind(Y_{\ore}=\infty)$.
For a finite acyclic graph $G$, $\bI$ is the unique solution to the fixed point equation: $\bI =\Pcal_G(\bI)$ and we have
\BEA
\label{eq:MGfinite}2M(G)=\lim_{z\to \infty}\sum_{v\in V}\Dcal_v(\bY(z)) &=& \sum_{v\in V}\left( w_v\ind\left(\sum_{\ore\in \d v} I_{\ore}\geq w_v+1\right)+w_v\wedge \sum_{\ore \in \d v} I_{\ore}\right).
%&=&\sum_{v\in V} w_v\ind\left(\sum_{\ore\in \d^t v} I_{\ore}\geq w_v+1\right) +\frac{1}{2}\left(\sum_{\ore\in \d^t v}I_{\ore}\right)\ind\left(\sum_{\ore\in \d^t v} I_{\ore}\leq w_v\right).
\EEA
\end{proposition}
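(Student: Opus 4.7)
The plan has three parts: establish that $\bI$ defined by $I_\ore=\ind(Y_\ore=\infty)$ is a fixed point of $\Pcal_G$, show uniqueness, and then derive the two equalities in (\ref{eq:MGfinite}).

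\emph{Fixed point and uniqueness.} I would root the finite tree arbitrarily and induct on edges from the leaves upward. For a leaf $v$ adjacent to $p$, the set $\d v\bac p$ is empty, so $\Rcal_{p\to v}(\bY(z))=1$ and $Y_{v\to p}(z)=z\to\infty$, which matches $\Pcal_{p\to v}(\bI)=\ind(\text{empty sum}<w_v)=1$. For the inductive step, let $B_{ij}:=\{k\in\d i\bac j:I_{k\to i}=1\}$. Grouping the subset sums in the definition of $\Rcal_{j\to i}(\bY(z))$ according to their intersection with $B_{ij}$ and extracting the dominant $\bY^{B_{ij}}$ factor shows that, as $z\to\infty$, $\Rcal_{j\to i}(\bY(z))$ is bounded below by a positive constant when $|B_{ij}|<w_i$, and is of order $1/z$ when $|B_{ij}|\geq w_i$. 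Hence $Y_{i\to j}(z)=z\Rcal_{j\to i}(\bY(z))$ diverges in the first case and stays bounded in the second, which is exactly $\Pcal_{j\to i}(\bI)$. Uniqueness follows from the same induction: any fixed point must equal $1$ on leaf edges (empty-sum convention in (\ref{eq:defP})) and is then determined layer by layer toward the root.

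\emph{First equality of (\ref{eq:MGfinite}).} By (\ref{eq:limM}) and Proposition \ref{prop:Recfinite}, $D_v^z=\Dcal_v(\bY(z))$ on a finite tree, so swapping the finite sum with the limit gives $2M(G)=\lim_z\sum_v\Dcal_v(\bY(z))$. The monotonicity properties of Proposition \ref{prop:localmon} imply $\bY(z)\uparrow\bY\in[0,\infty]^{\orE}$ as $z\to\infty$, and combined with the continuity of $\Dcal$ from Lemma \ref{lem:Dcal} this reduces the second equality to the algebraic identity $\sum_v\Dcal_v(\bY)=\sum_v\bigl(w_v\ind(|B_v|\geq w_v+1)+w_v\wedge|B_v|\bigr)$, where $|B_v|:=\sum_{\ore\in\d v}I_\ore$.

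\emph{The identity (main obstacle).} This equality is not term-wise, and I would prove it by a double-counting over the set of \emph{mixed} edges, i.e.\ edges $(v,k)$ with $I_{v\to k}\neq I_{k\to v}$. By (\ref{eq:Rcal0}), whenever $|B_v|\geq w_v$ one has $\Rcal_e(\bY)=0$ for every $e\in\d v$ with $I_e=0$, so Lemma \ref{lem:Dcal} gives $\Dcal_v(\bY)=w_v\wedge|B_v|$ on that range; in particular each saturated vertex ($|B_v|\geq w_v+1$) produces a \emph{deficit} of $w_v$ relative to the target formula. For $|B_v|\leq w_v-1$ the fixed-point equation forces $I_{v\to j}=1$ for every $j\in\d v$, and the extra sum in $\Dcal_v(\bY)$ collects exactly the asymptotic edge marginals $p_{(v,k)}:=\lim_z\mu_G^z(B_{(v,k)}=1)$ over the mixed edges incident to $v$, yielding a \emph{surplus}. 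A short case analysis of the four possibilities for $(I_{v\to k},I_{k\to v})$ shows every mixed edge has exactly one saturated endpoint and one strictly non-saturated endpoint. Moreover at a saturated vertex $v$ the marginal identity from Lemma \ref{lem:Dcal} reads $w_v=\Dcal_v(\bY)=\sum_{k\in\d v}p_{(v,k)}=\sum_{(v,k)\text{ mixed}}p_{(v,k)}$, since edges with $I_{v\to k}=I_{k\to v}=0$ have $p_{(v,k)}=0$. Summing over saturated vertices the total deficit equals the total mixed-edge marginal, and summing over strictly non-saturated vertices the total surplus equals the same quantity; the two contributions cancel and the identity follows.
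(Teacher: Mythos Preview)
Your argument is correct, and the mixed-edge double counting is a genuinely different route from the paper's proof.

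For the fixed-point part, the paper argues slightly more indirectly: it sets $\bX=\Rcal_G(\bY)$ and $I'_{\ore}=\ind(X_{\ore}>0)$, reads off from (\ref{eq:Rcal0}) and (\ref{eq:Qcal0}) that $\bI=\Pcal_G(\bI')$ and $\bI'=\Pcal_G(\bI)$, and then a leaf-to-root sweep forces $\bI=\bI'$ and uniqueness. Your direct induction on the limiting behaviour of $Y_{i\to j}(z)$ is valid, but one step deserves more care than your sketch gives it: to conclude that $z\Rcal_{j\to i}(\bY(z))$ stays bounded when $|B_{ij}|\geq w_i$ you implicitly use that each divergent $Y_{k\to i}(z)$ grows \emph{linearly} in $z$ (equivalently $Y_{k\to i}(z)/z$ has a positive limit), which is itself part of the same induction and should be stated as such.

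For the identity (\ref{eq:MGfinite}), the paper's proof is algebraic: it rewrites $Y_{\ore}\Rcal_{\ore}(\bY)=X_{-\ore}\Qcal_{-\ore}(\bX)$, reindexes the double sum over $(v,\ore\in\d v)$ by reversing every oriented edge, and then invokes Lemma~\ref{lem:Q} to collapse $\sum_{\ore\in\d v}\frac{X_{\ore}\Qcal_{\ore}(\bX)}{1+X_{\ore}\Qcal_{\ore}(\bX)}\ind(\Qcal_{\ore}(\bX)<\infty)$ into $w_v\ind(\sum_{\ore}I_{\ore}\geq w_v+1)$ at each vertex. Your argument replaces this machinery by the probabilistic observation $\Dcal_v(\bY)=\sum_{k\in\d v}p_{(v,k)}$, where $p_e$ is the edge marginal under the limiting (uniform over maxima) measure, together with the trichotomy on $|B_v|$ and the fact that every mixed edge joins a saturated vertex to a strictly non-saturated one. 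The cancellation of surplus against deficit is then exactly the same reindexing, expressed combinatorially rather than through the operator $\Qcal_G$.

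What each approach buys: the paper's route develops $\Qcal_G$ and Lemma~\ref{lem:Q} precisely because they are reused verbatim in the infinite unimodular setting of Proposition~\ref{prop:MG}, where the edge-reversal becomes the Mass-Transport Principle and the calculation transfers unchanged. Your argument is more self-contained and gives a clearer combinatorial picture in the finite case, but would need the MTP made explicit to carry over to infinite trees.
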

\begin{proof}
If $\bX=\Rcal_G(\bY)$ and $\bI'$ is defined by $I'_{\ore} = \ind(X_{\ore}>0)$, then by (\ref{eq:Rcal0}) and (\ref{eq:Qcal0}), we see that $\bI=\Pcal_G(\bI')$ and $\bI'=\Pcal_G(\bI)$ so that $\bI=\Pcal_G\circ\Pcal_G(\bI)$. Moreover, starting form the leaves and iterating $\Pcal_G$, is is easily seen that $\bI'=\bI$ and that it is the unique solution to $\bI=\Pcal_G(\bI)$.
Then, we have
\BEAS
\lim_{z\to \infty}\sum_{v\in V}\Dcal_v(\bY(z))
&=& \sum_{v\in V}\left(\sum_{\ore\in \d v}\frac{Y_{\ore}\Rcal_{\ore}(\bY)}{1+Y_{\ore}\Rcal_{\ore}(\bY)}\ind(Y_{\ore}<\infty)+w_v\wedge \sum_{\ore\in \d v} I_{\ore} \right).
\EEAS
Using $X_{-\ore}=\Rcal_{\ore}(\Yb)$ and $Y_{\ore}=\Qcal_{-\ore}(\Xb)$, we get
\BEAS
\lim_{z\to \infty}\sum_{v\in V}\Dcal_v(\bY(z))
&=&\sum_{v\in V}\left(\sum_{\ore\in \d v}\frac{X_{\ore}\Qcal_{\ore}(\bX)}{1+X_{\ore}\Qcal_{\ore}(\bX)}\ind(\Qcal_{\ore}(\bX)<\infty)+w_v\wedge \sum_{\ore\in \d v} I_{\ore} \right)\\
&=& \sum_{v\in V}\left(w_v\ind\left( \sum_{\ore\in \d v }\ind(X_{\ore}>0)\geq w_v+1\right)+w_v\wedge \sum_{\ore\in \d v} I_{\ore} \right),
\EEAS
where we used Lemma \ref{lem:Q} to get the last equality. The proposition now follows from the observation noted above that $\ind(X_{\ore}>0) =I'_{\ore}=I_{\ore}$.
\end{proof}
\begin{remark}
If we remove the assumption that the graph is acyclic, then
\begin{itemize}
\item there might be no solution to the fixed point equation $\bI=\Pcal_G(\bI)$. This is the case for the complete graph with $3$ vertices and all weights equal to $1$.
\item there might exist several solutions to the fixed point equation $\bI=\Pcal_G(\bI)$. This is the case for the complete graph with $4$ vertices and all constraints equal to $2$: all solutions can be obtained by permuting the role of the edges in the following picture:
\begin{figure}[htb]
\begin{center}
\includegraphics[width=2cm]{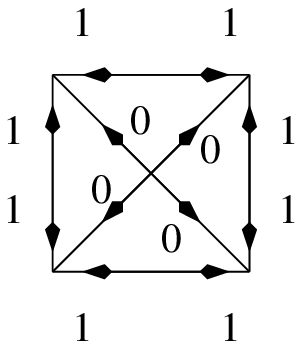}
\caption{solutions to $\bI=\Pcal_G(\bI)$.}
\label{fig:K4}
\end{center}
\end{figure}
\iffalse
\begin{displaymath}
\xymatrix{
{\bullet} \ar@2{<->}[r] \ar@{<->}[dr] \ar@2{<->}[d]
& {\bullet} \ar@2{<->}[d] \ar@{<->}[dl] \\
{\bullet} \ar@2{<->}[r]& {\bullet} 
} \mbox{ where an edge } \xymatrix{\ar@2{<->}[r]&}\mbox{ corresponds to $I_{\ore}=I_{-\ore}=1$ and the other edges to $I_{\ore}=I_{-\ore}=0$.}
\end{displaymath}
\fi
\end{itemize}
\end{remark}

If $G$ is a finite tree, this proposition with (\ref{eq:limM}) shows that the size of a maximal spanning subgraph $M(G)$ boils down to the computation of (\ref{eq:MGfinite}) which is exactly the formula (\ref{eq:algo}) (for the particular case $w_v=w$).
We will now show how to extend this computation to the case where $G$ is possibly infinite.

\subsection{Infinite unimodular networks}\label{sec:unimod}

We first need to recall the general framework of \cite{aldouslyons}.
We still denote by $G=(V,E)$ a possibly infinite graph with vertex set
$V$ and undirected edge set $E$. We always assume that the degrees are
finite, i.e. the graph is locally finite. 
%When there is more than one graph under discussion, we write $V_G$ and $E_G$ to avoid ambiguity.
A network is a graph $G=(V,E)$ together with a complete separable metric space $\Xi$ called the mark space and maps from $V$ and $\orE$ to $\Xi$. Images in $\Xi$ are called marks. Each edge is given two marks, one associated to each of its orientation. 
A rooted network $(G,\circ)$ is a network with a distinguished vertex $\circ$ of $G$ called the root. A rooted isomorphism of rooted networks is an isomorphism of the underlying networks that takes the root of one to the root of the other. We do not distinguish between a rooted network and its isomorphism class denoted by $[G,\circ]$. Indeed, it is shown in \cite{aldouslyons} how to define a canonical representative of a rooted-isomorphism class.

Let $\Gcal_*$ denote the set of rooted isomorphism classes of rooted connected locally finite networks. Define a metric on $\Gcal_*$ by letting the distance between $[G_1,\circ_1]$ and $[G_2,\circ_2]$ be $1/(1+\alpha)$ where $\alpha$ is the supremum of those $r>0$ such that there is some rooted isomorphism of the balls of graph-distance radius $\lfloor r \rfloor$ around the roots of $G_i$ such that each pair of corresponding marks has distance less than $1/r$. $\Gcal_{*}$ is separable and complete in this metric \cite{aldouslyons}.

Similarly to the space $\Gcal_*$, we define the space $\Gcal_{**}$ of isomorphism classes of locally finite connected networks with an ordered pair of distinguished vertices and the natural topology thereon.

\begin{definition}\cite{aldouslyons}
Let $\rho$ be a probability measure on $\Gcal_*$. We call $\rho$ unimodular if it obeys the Mass-Transport Principle: for Borel $f:\Gcal_{**}\to [0,\infty]$, we have
\BEA
\label{eq:MTP}\int \sum_{x\in V} f(G,\circ, x) d\rho([G,\circ]) = \int \sum_{x\in V} f(G,x,\circ) d\rho([G,\circ]).
\EEA
Let $\Ucal$ denote the set of unimodular Borel probability measures on $\Gcal_*$.
\end{definition}

We now illustrate this concept with examples from our problem: a (possibly disconnected) finite graph $G=(V,E)$ with a vector of constraints $\bw=(w_v,\: v\in V)$ is a network with mark space $\RR$ and vertices have marks given by the vector $\bw$ (and oriented edges have mark zero). We still denote by $G$ the network and we write $G_x$ for the connected component of $x$ in $G$. For a (deterministic) finite network $G$, we define $U_G$ as the distribution on $\Gcal_*$ induced by the uniform measure on the vertices of $G$: more precisely, $U_G\left([G_x,x]\right)=\frac{1}{|V|}\sum_{y\in V}\ind\left( (G_y,y)\in [G_x,x]\right)$.
It is easy to verify that $U_G$ is unimodular for any finite network $G$ since the equality (\ref{eq:MTP}) is equivalent to an interchange of (finite) summation in:
\BEAS
\frac{1}{|V|} \sum_{\circ\in V}\sum_{x\in V} f(G,\circ,x).
\EEAS

If $\rho$ is a unimodular probability measure on $\Gcal_*$ with marks on vertices corresponding to their degree constraints, it will be useful to construct other unimodular measures from $\rho$. 
We will always start with $\rho$ a unimodular probability measure on $\Gcal_*$ with marks on vertices corresponding to their degree constraints (and marks on edges being zero).
We will take as mark spaces: $\Xi =\RR_+^{\N}$ so that the initial mark of vertex $v$ is simply $(w_v,0\dots)$.
Note that the map $\Rcal_G$ is still perfectly defined for any locally finite network $G$ and is isomorphism invariant, in the sense that for a graph isomorphism $\varphi$, we have $\Rcal_{\varphi(G)}(\bY)=\Rcal_G(\bY\circ \varphi^{-1})$ where vectors in $\bZ\in \RR^{\orE}$are viewed as maps from $\orE$ to $\RR$. Hence, we define $\Rcal_{[G,\circ]}=\Rcal_{G'}$ where $(G',o')$ is the canonical representative of $[G,\circ]$. To simplify notations, we write $\Rcal_G=\Rcal_{[G,\circ]}$.
We define $\bY^0(z)=\bold{0}$ and for $k\geq 0$, $\bY^{k+1}(z)=z\Rcal_G(\bY^k(z))$.
It is easy to show that if we set the mark for each edge $\ore$ to $(\bY^0_{\ore}(z), \bY^1_{\ore}(z),\dots, \bY^k_{\ore}(z),0,\dots)$ we should still get an unimodular probability on $\Gcal^*$. Indeed, write explicitly the dependence in the mark and define
$g(G,\circ, x, \bY(z))= f(G,\circ, x, z\Rcal_G(\bY(z)))$. If we proved that the measure $\rho$ with the marks $(\bY^0_{\ore}(z), \bY^1_{\ore}(z),\dots, \bY^{k-1}_{\ore}(z),0,\dots)$ is unimodular, we get
\BEAS
\int \sum_{x\in V} f(G,\circ, x, \bY^k(z)) d\rho([G,\circ]) &=&\int \sum_{x\in V} g(G,\circ, x, \bY^{k-1}(z)) d\rho([G,\circ])  \\
&=& \int \sum_{x\in V} g(G,x,\circ,\bY^{k-1}(z)) d\rho([G,\circ])\\
&=&\int \sum_{x\in V} f(G,x,\circ, \bY^k(z)) d\rho([G,\circ]),
\EEAS
and the claim for $k$ follows.

By Proposition \ref{prop:localmon}, we can define $\lim_{k \to \infty}\uparrow \bY^{2k}(z)=\bY^-(z)$ and $\lim_{k \to \infty}\downarrow \bY^{2k+1}(z) = \bY^+(z)$, with $\bY^+(z)=z\Rcal_G(\bY^-(z))$ and $\bY^-(z)=z\Rcal_G(\bY^+(z))$ $\rho$-a.s. %Since the pointwise limit of a sequence of measurable functions is measurable, we see that for any function $f$ Borel, $f(G,x,\circ, \bY^-(z),\bY^{+}k(z))=\lim_kf(G,x,\circ, \bY^2k(z),\bY^{2+1}k(z))$ is still Borel so that by previous argument, $(\bY^-(z),\bY^+(z))$ is spatially invariant. 
We now define for any $\bY\in \RR_+$:
\BEAS
f(G,\circ,x,\bY) &=& \frac{Y_{x\to \circ} \Rcal_{x\to \circ}(\bY)}{1+Y_{x\to \circ} \Rcal_{x\to \circ}(\bY)}, \mbox{ for $\circ$ and $x$ adjacent,}
\EEAS
and $f(G,\circ,x,\bY)=0$ if $\circ$ and $x$ are not adjacent.
Note that we have: $0=\bY^{0}(z)\leq \bY^-(z)\leq\bY^+(z)\leq z=\bY^1(z) $, in particular all quantities are finite, hence by definition:
\BEAS
\int \Dcal_{\circ}(\bY^-(z))d\rho([G,\circ])&=&\int\sum_{x\in V}f(G,\circ,x,\bY^{-}(z))d\rho([G,\circ])\\ 
&=&\int\sum_{x\in \d \circ}\frac{Y^-_{x\to \circ}(z) \Rcal_{x\to \circ}(\bY^-(z))}{1+Y^-_{x\to \circ}(z) \Rcal_{x\to \circ}(\bY^-(z))}d\rho([G,\circ])\\
&=&\int\sum_{x\in \d \circ}\frac{\Rcal_{\circ\to x}(\bY^+(z)) Y^+_{\circ\to x}(z) }{1+\Rcal_{\circ\to x}(\bY^+(z)) Y^+_{\circ\to x}(z)}d\rho([G,\circ])\\
&=&\int \sum_{x\in V}f(G,x,\circ,\bY^+(z))d\rho([G,\circ])
\EEAS
where we used the identity: $Y^-_{x\to \circ}(z) \Rcal_{x\to
  \circ}(\bY^-(z))=\Rcal_{\circ\to x}(\bY^+(z)) Y^+_{\circ\to
  x}(z)$. Since $f$ is continuous in $\bY$ and bounded by one, we can
apply the dominated convergence theorem and the Mass-Transport
Principle with marks $\bY^{2k+1}(z)$ which converge to $\bY^+(z)$, to get:
\BEAS
\int \Dcal_{\circ}(\bY^-(z))d\rho([G,\circ])&=&\int \sum_{x\in V_G}f(G,\circ,x,\bY^+(z))d\rho([G,\circ])\\
&=&\int \Dcal_{\circ}(\bY^+(z))d\rho([G,\circ]),
\EEAS
Write $\odeg(\rho)$ %= \int |\d \circ|d\rho([G,\circ])$ 
for the expectation of the degree of the root with respect to $\rho$.
If $\odeg(\rho)<\infty$, then the above expectations are finite and since by Proposition \ref{prop:localmon}, we have $\rho$-a.s. $\Dcal_{\circ}(\bY^-)\leq \Dcal_{\circ}(\bY^+)$, it follows that we have indeed equality and then by the strict monotonicity of $\Dcal_\circ$, we get $\bY^-_{x\to \circ}(z)=\bY^+_{x\to \circ}(z)$, for all $x\in \d \circ$ and $z>0$, $\rho$-a.s. Then by Lemma 2.3 \cite{aldouslyons}, this directly implies that $\bY^-(z)=\bY^+(z)$ $\rho$-a.s. We denote it simply by $\bY(z)$ and we have $\bY(z)=z\Rcal_G(\bY(z))$.

Let now $\rho^Y$ be a probability measure on $\Gcal_*$ with marks on vertices $\bw$ and on edges $\bY(z)$ such that $\bY(z)=z\Rcal_G(\bY(z))$ $\rho^Y$-a.s. We assume that the marginal law $\rho$ of $\rho^Y$ where marks on edges are ignored (and only marks on vertices $\bw$ are kept) is unimodular.
Since $\bold{0}\leq \bY(z)\leq z$, we have by a simple induction $\bY^-(z)\leq \bY(z)\leq\bY^+(z)$ so that $\bY^-(z)=\bY^+(z)=\bY(z)$ $\rho^Y$-a.s. if $\odeg(\rho)<\infty$. Hence we proved the following proposition:

\begin{proposition}\label{prop:fpoint}
Let $\rho$ be a unimodular probability measure on $\Gcal_*$ with marks on vertices corresponding to degree constraints $\bw$. If $\odeg(\rho)<\infty$, then the fixed point equation $\Yb(z)=z\Rcal_G(\Yb(z))$ admits a unique solution $\Yb(z)$ for any $z>0$ for $\rho$-almost every $(G,\bw)$.
\end{proposition}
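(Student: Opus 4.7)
The plan is to construct a fixed point as the common limit of two monotone iterations from the extreme initial conditions $\bold{0}$ and $z\cdot\bold{1}$, and to force the two limits to coincide using the Mass-Transport Principle.

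Concretely, I would set $\bY^0(z)=\bold{0}$ and $\bY^{k+1}(z)=z\Rcal_G(\bY^k(z))$. Because each $\Rcal_e$ is non-increasing in every coordinate (Proposition \ref{prop:localmon}), the subsequence $(\bY^{2k}(z))_k$ is coordinate-wise non-decreasing with limit $\bY^-(z)$, and $(\bY^{2k+1}(z))_k$ is non-increasing with limit $\bY^+(z)$, with $\bold{0}\leq \bY^-(z)\leq \bY^+(z)\leq z\cdot\bold{1}$. Passing to the limit in the recursion yields the cross-relations $\bY^+(z)=z\Rcal_G(\bY^-(z))$ and $\bY^-(z)=z\Rcal_G(\bY^+(z))$. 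Any other fixed point $\bY'(z)$ on a given realization satisfies $\bold{0}\leq \bY'(z)\leq z\cdot\bold{1}$, so the same monotonicity yields $\bY^-(z)\leq \bY'(z)\leq \bY^+(z)$ by induction on the iterations started from $\bold{0}$ and $z\cdot\bold{1}$. Both existence and uniqueness therefore reduce to the claim $\bY^-(z)=\bY^+(z)$ for $\rho$-almost every $(G,\bw)$.

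To prove this I would invoke the Mass-Transport Principle. The discussion immediately preceding the proposition shows, by induction on $k$, that decorating each oriented edge with $\bY^k(z)$ preserves unimodularity, since $\Rcal_G$ is isomorphism-equivariant and the initial decoration is constant. For $u,v$ adjacent in $G$, define
\BEAS
f(G,u,v,\bY) \;=\; \frac{Y_{v\to u}\,\Rcal_{v\to u}(\bY)}{1+Y_{v\to u}\,\Rcal_{v\to u}(\bY)},
\EEAS
and $f(G,u,v,\bY)=0$ otherwise, so that $\sum_x f(G,\circ,x,\bY)=\Dcal_\circ(\bY)$. Applying the Mass-Transport Principle with the mark $\bY^{2k+1}(z)$ gives
\BEAS
\int \Dcal_\circ(\bY^{2k+1}(z))\,d\rho \;=\; \int \sum_x f(G,x,\circ,\bY^{2k+1}(z))\,d\rho,
\EEAS
and because $f\leq 1$, $\Dcal_\circ\leq \deg(\circ)$, and $\odeg(\rho)<\infty$, dominated convergence lets me pass $k\to\infty$. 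The limiting right-hand side equals $\int \sum_x \tfrac{Y^+_{\circ\to x}\Rcal_{\circ\to x}(\bY^+)}{1+Y^+_{\circ\to x}\Rcal_{\circ\to x}(\bY^+)}\,d\rho$, which by the pointwise identity $Y^+_{\circ\to x}\Rcal_{\circ\to x}(\bY^+)=Y^-_{x\to\circ}\Rcal_{x\to\circ}(\bY^-)$ (itself an immediate consequence of the two cross-relations applied to the opposite orientations of $\{x,\circ\}$) equals $\int \Dcal_\circ(\bY^-(z))\,d\rho$. Hence $\int \Dcal_\circ(\bY^-(z))\,d\rho=\int \Dcal_\circ(\bY^+(z))\,d\rho$.

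Both sides are finite thanks to $\odeg(\rho)<\infty$, while $\Dcal_\circ$ is strictly increasing in each of its arguments (Proposition \ref{prop:localmon}) and $\bY^-\leq \bY^+$ coordinate-wise. The integrated equality therefore forces $\bY^-_{x\to\circ}(z)=\bY^+_{x\to\circ}(z)$ for every $x\in \d\circ$, for $\rho$-almost every $[G,\circ]$. Lemma 2.3 of \cite{aldouslyons} upgrades this coordinate-wise equality at the root to equality on every oriented edge of the network, so $\bY^-(z)=\bY^+(z)$ $\rho$-a.s., and the sandwich above concludes. The step I expect to require the most care is the bookkeeping inside the Mass-Transport Principle: one must carry out the induction that $\bY^k(z)$ still yields a unimodular decoration (treating the growing list of marks as an element of $\RR_+^\N$) and ensure that the cross-identity is applied on exactly the side of the transport equation where the orientation of the edge is reversed, so that the roles of $\bY^-$ and $\bY^+$ are exchanged cleanly.
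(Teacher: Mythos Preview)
Your proposal is correct and follows essentially the same argument as the paper: the iteration from $\bold{0}$, the cross-relations $\bY^\pm=z\Rcal_G(\bY^\mp)$, the transport function $f$, the MTP combined with dominated convergence to obtain $\int\Dcal_\circ(\bY^-)d\rho=\int\Dcal_\circ(\bY^+)d\rho$, strict monotonicity of $\Dcal_\circ$ to get equality at the root, and the appeal to Lemma~2.3 of \cite{aldouslyons} to propagate it everywhere. The only cosmetic difference is ordering: the paper first proves $\bY^-=\bY^+$ and then sandwiches an arbitrary solution, whereas you state the sandwich first; the content is identical.
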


This proposition allows us to extend the fixed point equation of
Proposition \ref{prop:Recfinite} to an infinite setting. In the case
where a sequence of finite graphs $G_n$ admits a random weak limit
$\rho$ concentrated on trees (to be defined in Section
\ref{sec:fininf}), the solution $\Yb(z)$ will allow us to define the
limit of $\mu_{G_n}^z$.
We now concentrate on the $z\to \infty$ limit and derive a result
similar to Proposition \ref{prop:MGfinite} for unimodular networks.

We first introduce a convenient definition:
\begin{definition}
Let $\rho$ be a unimodular probability measure on $\Gcal_*$ with mark space $\Xi$. Let $\bM$ be maps from $\orE$ to $\Xi$. We say that $\bM$ is spatially invariant if the measure obtained from $\rho$ by adding the marks $\bM$ is still unimodular.
With a slight abuse of notation (i.e. we still denote by $\rho$ the measure with marks in $\Xi\times \Xi$), spatial invariance is characterised by: for Borel $f:\Gcal_{**}\times \Xi^{ \orE}\to [0,\infty]$, we have:
\BEA
\label{eq:MTPext}\int \sum_{x\in V_G} f(G,\circ, x,\bM) d\rho([G,\circ]) = \int \sum_{x\in V_G} f(G,x,\circ,\bM) d\rho([G,\circ]).
\EEA
\end{definition}
For example, we already proved that $\bY^k(z)$ is spatially invariant for our generic $\rho$.

Note that the maps $\Qcal_G$ and $\Pcal_G$ are well-defined for any locally finite network $G$ and we can define them for the isomorphism class $[G,\circ]$ thanks to its canonical representative, as we did above for $\Rcal_G$.
We can now define for any $\bI\in \{0,1\}^{\orE}$, 
\BEAS
F_{\circ}(\bI) = w_{\circ}\ind(\sum_{x\in \d \circ}\Pcal_{\circ \to x}(\bI)\geq w_{\circ}+1)+w_{\circ}\wedge \sum_{x\in \d \circ}I_{x\to \circ}.
\EEAS
Recall that for any $\bY(z)$, the 'degree' of $v$: $\Dcal_v(\bY(z))$ is defined by (\ref{eq:defD}).
We now prove the following result
\begin{proposition}\label{prop:MG}
Let $\rho$ be a unimodular probability measure on $\Gcal_*$ with $\odeg(\rho)<\infty$. Let $\bY(z)$ be the unique solution to $\bY(z)=z\Rcal_G(\bY(z))$ as defined in Proposition \ref{prop:fpoint}, then $\bY(z)\uparrow\bY$ as $z$ tends to infinity, where $\bY$ is the minimal solution to $\bY=\Qcal_G\circ \Rcal_G(\bY)$ and we have
\BEA\label{eq:inflim}
\lim_{z\to \infty}\int \Dcal_\circ(\Yb(z)) d\rho([G,\circ]) =\int \Dcal_\circ(\Yb) d\rho([G,\circ]) = \inf\left\{ \int F_\circ(\bI)d\rho([G,\circ]) \right\},
\EEA
where the infimum is over all spatially invariant solution of $\bI=\Pcal_G\circ\Pcal_G(\bI)$. 
Moreover, when $\rho$ is concentrated on trees, we can restrict the infimum over all spatially invariant solution of $\bI=\Pcal_G\circ\Pcal_G(\bI)$ which are locally independent, i.e. such that the components of the vector $\bI$ restricted to $\d \circ$ are independent.
\end{proposition}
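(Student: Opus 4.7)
The plan is to lift the argument behind Proposition \ref{prop:MGfinite} to the unimodular infinite-graph setting, using the extended local operators $\Rcal_G,\Qcal_G,\Pcal_G$ together with unimodularity.

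First I would prove the monotone convergence $\bY(z)\uparrow\bY$ to the minimal $[0,\infty]$-valued fixed point of $\bY=\Qcal_G\circ\Rcal_G(\bY)$. Since $z\mapsto z\Rcal_e(z\bX)$ is non-decreasing by Proposition \ref{prop:localmon}, iterating $\bY(z)=z\Rcal_G(z\Rcal_G(\bY(z)))$ from $\mathbf{0}$ shows that $z\mapsto\bY(z)$ is coordinatewise non-decreasing; its pointwise limit $\bY$ satisfies $\bY=\Qcal_G\circ\Rcal_G(\bY)$ by the very definition $\Qcal_e(\bX)=\lim_{z\to\infty}\uparrow z\Rcal_e(z\bX)$ together with the continuity extensions in the preceding lemmas. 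Minimality follows since any other fixed point $\bY^*$ obeys $\bY^*\geq z\Rcal_G(z\Rcal_G(\bY^*))$ for every $z$, so iterating from $\mathbf{0}\leq\bY^*$ yields $\bY(z)\leq\bY^*$, and hence $\bY\leq\bY^*$ after letting $z\to\infty$.

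Next, the local degree $\Dcal_\circ(\bY(z))$ is non-decreasing in $z$ (Proposition \ref{prop:localmon}) and bounded above by $\deg(\circ)$, which has finite $\rho$-expectation by the assumption $\odeg(\rho)<\infty$; together with the continuous extension of $\Dcal_\circ$ from Lemma \ref{lem:Dcal}, monotone convergence gives $\int\Dcal_\circ(\bY(z))\,d\rho\uparrow\int\Dcal_\circ(\bY)\,d\rho$. Setting $\bX:=\Rcal_G(\bY)$, $I_{\ore}:=\ind(Y_{\ore}=\infty)$, and $I'_{\ore}:=\ind(X_{\ore}>0)$, relations \eqref{eq:Rcal0}--\eqref{eq:Qcal0} translate the fixed-point equation $\bY=\Qcal_G\circ\Rcal_G(\bY)$ into $\bI'=\Pcal_G(\bI)$ and $\bI=\Pcal_G(\bI')$, so $\bI$ solves $\bI=\Pcal_G\circ\Pcal_G(\bI)$ pointwise; spatial invariance of $\bI$ is inherited from that of the iterates $\bY^k(z)$ (proved just before Proposition \ref{prop:fpoint}) by passing to the two monotone limits. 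Running the algebra of the proof of Proposition \ref{prop:MGfinite} at the root---split $\Dcal_\circ(\bY)$ according to whether $Y_{\ore}=\infty$, rewrite the finite part through $X_{-\ore}=\Rcal_{\ore}(\bY)$ and $Y_{\ore}=\Qcal_{-\ore}(\bX)$, and apply Lemma \ref{lem:Q}---then yields the pointwise identity $\Dcal_\circ(\bY)=F_\circ(\bI)$.

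To show that this common value is the infimum of $\int F_\circ(\mathbf{J})\,d\rho$ over spatially invariant solutions $\mathbf{J}$ of $\mathbf{J}=\Pcal_G\circ\Pcal_G(\mathbf{J})$, I would associate to each such $\mathbf{J}$ a spatially invariant fixed point $\bY^{\mathbf{J}}\in[0,\infty]^{\orE}$ of $\bY=\Qcal_G\circ\Rcal_G(\bY)$, built by iterating $\Qcal_G\circ\Rcal_G$ from the seed $\bY^{\mathbf{J},0}:=\infty\cdot\mathbf{J}$. The identities \eqref{eq:Rcal0}--\eqref{eq:Qcal0} together with $\mathbf{J}=\Pcal_G\circ\Pcal_G(\mathbf{J})$ make the sequence non-decreasing and spatially invariant, so its limit $\bY^{\mathbf{J}}$ is a spatially invariant fixed point. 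By the minimality of $\bY$, $\bY\leq\bY^{\mathbf{J}}$, and the componentwise monotonicity of $\Dcal_\circ$ (Proposition \ref{prop:localmon} continuously extended to $[0,\infty]^E$) gives $\int\Dcal_\circ(\bY)\,d\rho\leq\int\Dcal_\circ(\bY^{\mathbf{J}})\,d\rho$. It remains to show $\int\Dcal_\circ(\bY^{\mathbf{J}})\,d\rho\leq\int F_\circ(\mathbf{J})\,d\rho$, i.e.\ that the integrated version of the previous paragraph's identity $\Dcal_\circ(\bY^{\mathbf{J}})=F_\circ(\ind(\bY^{\mathbf{J}}=\infty))$ is controlled by $F_\circ(\mathbf{J})$ despite a possible mismatch between $\ind(\bY^{\mathbf{J}}=\infty)$ and $\mathbf{J}$; I would handle this via a mass-transport identity relating incoming and outgoing copies of $\bI'$-coordinates, together with the fact that $\bY^{\mathbf{J}}\geq\bY^{\mathbf{J},0}$ constrains the two indicator vectors to match in a suitable unimodular-average sense.

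Finally, when $\rho$ is concentrated on trees, the construction $\mathbf{J}\mapsto\bY^{\mathbf{J}}$ preserves the conditional independence of $\bY^{\mathbf{J}}$ restricted to the oriented edges pointing towards the root across the disjoint subtrees hanging from its neighbours, because $\Rcal_G$ and $\Qcal_G$ at any vertex only couple the incoming messages there; this property transfers to the indicator $\ind(\bY^{\mathbf{J}}=\infty)$ and thus justifies restricting the infimum to locally independent $\mathbf{J}$. The principal obstacle is the last step of the previous paragraph---comparing $F_\circ(\ind(\bY^{\mathbf{J}}=\infty))$ with $F_\circ(\mathbf{J})$ in the integrated sense---for which I expect to need a unimodularity argument similar in spirit to Lemma 2.3 of \cite{aldouslyons} that was invoked in the proof of Proposition \ref{prop:fpoint}.
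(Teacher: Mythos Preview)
Your outline is close in spirit to the paper's argument, but there is a genuine gap at the point you yourself flag, and it is intertwined with an earlier error.

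The claimed ``pointwise identity $\Dcal_\circ(\bY)=F_\circ(\bI)$'' is false. On the path $a\!-\!b\!-\!c$ with $w\equiv 1$ one has $\Dcal_a(\bY)=\tfrac12$ while $F_a(\bI)=0$; only the sums over the three vertices agree. Concretely, after your split the finite part of $\Dcal_\circ(\bY)$ reads
\[
\sum_{x\in\partial\circ}\frac{Y_{x\to\circ}\,X_{\circ\to x}}{1+Y_{x\to\circ}\,X_{\circ\to x}}\,\ind(Y_{x\to\circ}<\infty),
\qquad X_{\circ\to x}=\Rcal_{x\to\circ}(\bY),
\]
whereas Lemma~\ref{lem:Q} applies to
\[
\sum_{x\in\partial\circ}\frac{X_{x\to\circ}\,\Qcal_{x\to\circ}(\bX)}{1+X_{x\to\circ}\,\Qcal_{x\to\circ}(\bX)}\,\ind(\Qcal_{x\to\circ}(\bX)<\infty),
\]
and $\Qcal_{x\to\circ}(\bX)=Y_{\circ\to x}$. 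The term $Y_{x\to\circ}X_{\circ\to x}$ becomes $X_{x\to\circ}Y_{\circ\to x}$ only after swapping the two endpoints of the edge; in the finite-tree case this is the reindexing hidden in the sum $\sum_{v}$ of Proposition~\ref{prop:MGfinite}, and in the unimodular setting it is precisely the Mass-Transport Principle. So the equality $\int\Dcal_\circ(\bY)\,d\rho=\int F_\circ(\bI(\bY))\,d\rho$ is an \emph{integrated} identity, not a pointwise one.

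This same missing MTP step is what resolves your ``principal obstacle''. The paper isolates it as a lemma: for any spatially invariant $\bY$, setting $\bY':=\Qcal_G\circ\Rcal_G(\bY)$, one has $\int\Dcal_\circ(\bY)\,d\rho\ge\int F_\circ(\bI(\bY))\,d\rho$ if $\bY\le\bY'$, and the reverse inequality if $\bY\ge\bY'$ (with equality when $\bY=\bY'$). The proof is exactly the MTP swap above, followed by Lemma~\ref{lem:Q}; the sign of the inequality comes from replacing $Y_{x\to\circ}$ by $Y'_{x\to\circ}$ in the finite part before swapping. Applying this to your iterates $\bW^k$ from the seed $\infty\cdot\mathbf{J}$: one checks from \eqref{eq:Rcal0}--\eqref{eq:Qcal0} that $\bI(\bW^k)=\mathbf{J}$ for every $k$, and since $\bW^k\le\bW^{k+1}$ the lemma gives
\[
\int F_\circ(\mathbf{J})\,d\rho=\int F_\circ(\bI(\bW^k))\,d\rho\ \ge\ \int\Dcal_\circ(\bW^k)\,d\rho\ \uparrow\ \int\Dcal_\circ(\bY^{\mathbf{J}})\,d\rho,
\]
which is exactly the inequality you needed. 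The rest of your sketch (monotonicity of $z\mapsto\bY(z)$, minimality of $\bY$, the construction $\mathbf{J}\mapsto\bY^{\mathbf{J}}$, and the tree case) matches the paper.
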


In the case where $\rho$ is concentrated on finite trees, we recover exactly Proposition \ref{prop:MGfinite} since in this case there is a unique solution to $\bI=\Pcal_G\circ\Pcal_G(\bI)=\Pcal_G(\bI)$. One can check that in the example of Figure \ref{fig:dogx}, we have:
\BEAS
\int F_\circ(\bI_1)d\rho([G,\circ])=\frac{9}{8}, \int F_\circ(\bI_2)d\rho([G,\circ])=\frac{14}{8}, \mbox{ and }\int F_\circ(\bI_3)d\rho([G,\circ])=\frac{10}{8},
\EEAS
where the last quantity is the mean degree in a maximum spanning
subgraph. Note that the condition $\bI=\Pcal_G\circ \Pcal_G(\bI)$ is
crucial for (\ref{eq:inflim}) to be valid.

In the infinite setting, there might exist different spatially
invariant solutions to $\bI=\Pcal_G\circ \Pcal_G(\bI)$ even in the
case where $\rho$ is concentrated on trees.
In the particular case of branching processes, the computation of
$\int F_\circ(\bI)d\rho([G,\circ])$ for each of them is easy and we do
it in the following section. In order to conclude and get the
asymptotic size of a maximum spanning subgraph, we still need to show
that we can invert the limits in $z$ and $n$ and this is done in
Proposition \ref{prop:inverslim}.

\begin{remark}
Proposition \ref{prop:MG} is valid for any unimodular probability measure $\rho$, even if $\rho$ does not concentrate on trees. However, we can interpret $\Dcal_\circ(\Yb(z))$ as the degree of the root in a spanning subgraph taken at random according to the Gibbs measure with activity $z$, only when $\rho$ concentrates on trees. 
\end{remark}

\begin{proof}
%Recall that for any $\bI\in \{0,1\}^{\orE}$, 
%\BEAS
%F_{\circ}(\bI) = w_{\circ}\ind(\sum_{x\in \d \circ}\Pcal_{\circ \to x}(\bI)\geq w_{\circ}+1)+w_{\circ}\wedge \sum_{x\in \d \circ}I_{x\to \circ}.
%\EEAS
For $x\in [0,\infty]$, we define $I(x)=\ind(x=\infty)$ and $\bI(.)$ acts similarly on vectors (componentwise).
For $\bI\in \{0,1\}^E$, we define $\infty_{\bI}$ as the vector in $\{0,\infty\}^E$ having the same positive components as $\bI$. Note that we have $\infty_{\bI(\bx)}\leq \bx$ for any $\bx\in [0,\infty]^E$.

\begin{lemma}
Let $\rho$ be a unimodular probability measure on $\Gcal_*$ with $\odeg(\rho)<\infty$. For $\bY$ spatially invariant, we define $\bY'=\Qcal_G\circ \Rcal_G(\bY)$. Then $\bI(\bY') = \Pcal_G\circ\Pcal_G(\bI(\bY))$ and
\begin{itemize}
\item if $\bY\leq \bY'$ then $\int \Dcal_\circ(\bY)d\rho\geq \int F_\circ(\bI(\bY))d\rho$;
\item if $\bY\geq \bY'$ then $\int \Dcal_\circ(\bY)d\rho\leq \int F_\circ(\bI(\bY))d\rho$.
\end{itemize}
\end{lemma}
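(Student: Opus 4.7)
The equality $\bI(\bY')=\Pcal_G\circ\Pcal_G(\bI(\bY))$ is a direct componentwise check. Writing $\bX:=\Rcal_G(\bY)$, the characterisation (\ref{eq:Qcal0}) applied at vertex $i$ gives $Y'_{i\to j}=\infty$ iff $\sum_{f\in\d i\bac j}\ind(X_f>0)<w_i$, and (\ref{eq:Rcal0}) applied at $\ell$ identifies each indicator $\ind(X_{\ell\to i}>0)$ with the $\Pcal_G$-image of $\bI(\bY)$ at the edge $\ell\to i$. A single substitution converts the condition ``$Y'_{i\to j}=\infty$'' into ``$(\Pcal_G\circ\Pcal_G\bI(\bY))_{i\to j}=1$'', proving the identity.

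\textbf{Strategy for the inequalities.}
The plan is to strip off, from both $\Dcal_\circ(\bY)$ and $F_\circ(\bI(\bY))$, the common boundary contribution $w_\circ\wedge\sum_{x\in\d\circ}I_{x\to\circ}$, and to display the remaining ``regular'' parts in a common form. On the $\Dcal$ side, the regular part is $\sum_{x\in\d\circ}\frac{Y_{x\to\circ}X_{\circ\to x}}{1+Y_{x\to\circ}X_{\circ\to x}}\ind(Y_{x\to\circ}<\infty)$; setting $h(\circ,x,\bY):=\frac{Y_{x\to\circ}X_{\circ\to x}}{1+Y_{x\to\circ}X_{\circ\to x}}\ind(Y_{x\to\circ}<\infty)\ind(x\in\d\circ)$, the mass-transport principle applied to the spatially invariant marks $\bY$ rewrites its integral as $\int\sum_{x\in\d\circ}\frac{Y_{\circ\to x}X_{x\to\circ}}{1+Y_{\circ\to x}X_{x\to\circ}}\ind(Y_{\circ\to x}<\infty)d\rho$. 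On the $F$ side, the regular part $w_\circ\ind(\sum_{x\in\d\circ}\ind(X_{x\to\circ}>0)\geq w_\circ+1)$ is precisely the right-hand side of Lemma \ref{lem:Q} applied locally at $\circ$ with set $\d\circ$, constraint $w_\circ$ and values $(X_{x\to\circ})_{x\in\d\circ}$, so it equals $\sum_{x\in\d\circ}\frac{X_{x\to\circ}Y'_{\circ\to x}}{1+X_{x\to\circ}Y'_{\circ\to x}}\ind(Y'_{\circ\to x}<\infty)$, since the local $\Qcal$-value at $\circ$ on the element $x\to\circ$ is, by the very definition of $\bY'=\Qcal_G\circ\Rcal_G(\bY)$, exactly $Y'_{\circ\to x}$. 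Both integrands are then displayed in the identical shape $\int\sum_{x\in\d\circ}T(\cdot,X_{x\to\circ})d\rho$ with $T(y,X):=\frac{yX}{1+yX}\ind(y<\infty)$, evaluated at $y=Y_{\circ\to x}$ versus $y=Y'_{\circ\to x}$; the stated inequality is then the monotonicity of $T(\cdot,X)$ on $[0,\infty)$ combined with the hypothesis $\bY\leq\bY'$ (resp.\ $\bY\geq\bY'$).

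\textbf{Main obstacle.}
The delicate step is this last monotonicity argument, because $T(\cdot,X)$ is \emph{not} monotone on the whole of $[0,\infty]$: it is strictly increasing on $[0,\infty)$ but, due to the factor $\ind(y<\infty)$, it drops back to $0$ at $y=\infty$ as soon as $X>0$. Consequently the per-edge comparison has the ``wrong'' sign on those edges where $Y_{\circ\to x}$ and $Y'_{\circ\to x}$ lie on opposite sides of $\infty$. Using the identity $\bI(\bY')=\Pcal_G\circ\Pcal_G\bI(\bY)$ established in the first part, these mismatched edges are exactly the ones on which $I(\bY)_{\ore}$ differs from $I(\bY')_{\ore}$, so their contribution to the regular part should, after one further use of the mass-transport principle, be absorbed by a compensating adjustment of the boundary term $w_\circ\wedge\sum_{x\in\d\circ}I_{x\to\circ}$ that was factored out at the start. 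The lemma is therefore intrinsically an ``in expectation'' statement: the edgewise comparison can fail, and it is the spatial invariance of $\bY$, together with the $\Pcal_G$-flipping behaviour of $\bI$, that makes the integrated inequality come out with the asserted sign.
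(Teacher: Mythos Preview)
Your overall strategy is exactly the paper's: split $\Dcal_\circ(\bY)$ via Lemma~\ref{lem:Dcal}, apply the Mass-Transport Principle to the ``regular'' part, and then invoke Lemma~\ref{lem:Q} to recognise $F_\circ(\bI(\bY))$. The only cosmetic difference is the order (you apply MTP before the comparison step, the paper after); this is immaterial.

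Where you diverge from the paper is in the last paragraph. The paper simply asserts the edgewise comparison
\[
\frac{Y_{x\to\circ}\Rcal_{x\to\circ}(\bY)}{1+Y_{x\to\circ}\Rcal_{x\to\circ}(\bY)}\ind(Y_{x\to\circ}<\infty)
\;\gtrless\;
\frac{Y'_{x\to\circ}\Rcal_{x\to\circ}(\bY)}{1+Y'_{x\to\circ}\Rcal_{x\to\circ}(\bY)}\ind(Y'_{x\to\circ}<\infty)
\]
without further comment. You are right to flag this: your function $T(\cdot,X)$ is not monotone across $y=\infty$, so the edgewise comparison can fail precisely on edges where $\bI(\bY)$ and $\bI(\bY')$ differ. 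Your proposed remedy (``one further use of the MTP should absorb the mismatched edges into the boundary term'') is not carried out, and in fact does not work in the stated generality: already on a star with centre constraint $2$ and three leaves of constraint $1$, one can take $\bY$ with $\bI(\bY)\neq\Pcal_G\circ\Pcal_G(\bI(\bY))$ and check numerically that the inequality goes the wrong way.

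The actual resolution is not an MTP trick but a hypothesis check. Every subsequent use of this lemma in the paper (Lemmas~\ref{lem:monotI} and~\ref{lem:WI}) applies it to vectors $\bY=\bW^k$ for which $\bI(\bW^k)=\bI(\bW^{k+1})$ by construction. Under that extra condition the indicators $\ind(Y_{\ore}<\infty)$ and $\ind(Y'_{\ore}<\infty)$ agree on every oriented edge, the ``straddling'' case is vacuous, and the ordinary monotonicity of $y\mapsto yX/(1+yX)$ on $[0,\infty)$ gives the edgewise bound directly. So the obstacle you identified is real for arbitrary $\bY$, but empty in the only situations the paper needs.

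One further remark: the direction your monotonicity argument produces is $\int\Dcal_\circ(\bY)\,d\rho\leq\int F_\circ(\bI(\bY))\,d\rho$ when $\bY\leq\bY'$ (and $\geq$ when $\bY\geq\bY'$), which is the opposite of the printed signs but is the direction actually invoked in Lemma~\ref{lem:monotI}. Treat the displayed inequalities in the lemma statement as a typo.
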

\begin{proof}
We have
\BEAS
\int \Dcal_\circ(\bY)d\rho &=& \int\left(\sum_{x\in \d \circ}\frac{Y_{x\to\circ}\Rcal_{x\to\circ}(\bY)}{1+Y_{x\to\circ}\Rcal_{x\to\circ}(\bY)}\ind(Y_{x\to\circ}<\infty)+w_{\circ}\wedge \sum_{x\in \d \circ} I(Y_{x\to \circ}) \right)d\rho([G,\circ])\\
&\geq& \int\left(\sum_{x\in \d \circ}\frac{Y'_{x\to\circ}\Rcal_{x\to\circ}(\bY)}{1+Y'_{x\to\circ}\Rcal_{x\to\circ}(\bY)}\ind(Y'_{x\to\circ}<\infty)+w_{\circ}\wedge \sum_{x\in \d \circ} I(Y_{x\to \circ}) \right)d\rho([G,\circ]),
\EEAS
if $\bY'\leq \bY$.
Let $\bX=\Rcal_G(\bY)$ so that $\bY'=\Qcal_G(\bX)$ and we have thanks to the MTP applied to the first term:
\BEAS
\int \Dcal_\circ(\bY)d\rho &\geq& \int\left(\sum_{x\in \d \circ}\frac{X_{x\to\circ}\Qcal_{x\to\circ}(\bX)}{1+X_{x\to\circ}\Qcal_{x\to\circ}(\bX)}\ind(\Qcal_{x\to\circ}(\bX)<\infty)+w_{\circ}\wedge \sum_{x\in \d \circ} I(Y_{x\to \circ}) \right)d\rho([G,\circ])\\
&=& \int \left( w_{\circ} \ind\left( \sum_{x\in \d \circ}\ind(X_{x\to \circ}>0)\geq w_{\circ}+1\right)+w_{\circ}\wedge \sum_{x\in \d \circ} I(Y_{x\to \circ}) \right)d\rho([G,\circ]).
\EEAS
\end{proof}

Let $\bI$ be a spatially invariant solution of $\bI =\Pcal_G\circ\Pcal_G(\bI)$.
We define $\bW^0=\infty_{\bI}$ and for $k\geq 0$, $\bW^{k+1} = \Qcal_G\circ \Rcal_G(\bW^k)$. By previous lemma, we have
\BEAS
\bI(\bW^{k+1})=\Pcal_G\circ\Pcal_G(\bI(\bW^k))=\bI.
\EEAS
Hence we have $\bW^0\leq \bW^1$ and since both $\Qcal_G$ and $\Rcal_G$ are both non-increasing the sequence $\bW^k$ is non-decreasing and we denote by $\bW^\bI$ its limit. We clearly have $\bI(\bW^\bI)\geq \bI$ and $\bW^\bI=\Qcal_G\circ \Rcal_G(\bW^\bI)$ so that by previous lemma:
\BEAS
\int \Dcal_\circ(\bW^\bI)d\rho([G,\circ]) = \int F_\circ(\bI(\bW^\bI))d\rho([G,\circ]). 
\EEAS
Moreover since $\bW^{k+1}\geq \bW^k$, previous lemma implies
\BEAS
\int F_\circ(\bI)d\rho([G,\circ]) =\int F_\circ(\bI(\bW^k))d\rho([G,\circ]) \geq \int \Dcal_{\circ}(\bW^k)d\rho([G,\circ]).
\EEAS
Moreover, recall that $\Dcal_\circ$ is increasing so that by the monotone convergence theorem, we have
\BEAS
\lim_{k\to \infty}\int \Dcal_{\circ}(\bW^k)d\rho([G,\circ]) = \int \Dcal_{\circ}(\bW^\bI)d\rho([G,\circ]).
\EEAS
Hence we proved:
\begin{lemma}\label{lem:monotI}
Let $\bI$ be a spatially invariant solution of $\bI =\Pcal_G\circ\Pcal_G(\bI)$. Then $\bW^\bI=\Qcal_G\circ\Rcal_G(\bW^\bI)$ and is such that $\bI(\bW^\bI)\geq \bI$ and
\BEAS
\int \Dcal_\circ(\bW^\bI)d\rho([G,\circ]) = \int F_\circ(\bI(\bW^\bI))d\rho([G,\circ])\leq \int F_\circ(\bI)d\rho([G,\circ]).
\EEAS
\end{lemma}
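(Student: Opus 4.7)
The plan is to construct $\bW^\bI$ explicitly as the monotone limit of an iterative scheme starting from $\bW^0 = \infty_\bI$, and then extract both the equality and the inequality in the conclusion by invoking the preceding lemma in two different regimes.

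First I would set $\bW^{k+1} = \Qcal_G \circ \Rcal_G(\bW^k)$ and verify by induction that $\bI(\bW^k) = \bI$ for every $k$. The base case is immediate since $\bI(\infty_\bI) = \bI$. For the inductive step, the preceding lemma tells us that $\bI(\bW^{k+1}) = \Pcal_G \circ \Pcal_G(\bI(\bW^k)) = \Pcal_G\circ\Pcal_G(\bI) = \bI$, using the hypothesis that $\bI$ is a fixed point of $\Pcal_G \circ \Pcal_G$. In particular $\bI(\bW^1) = \bI$ forces $\bW^1 \geq \infty_\bI = \bW^0$, since $\bW^1$ must be infinite on every coordinate where $\bI$ is $1$. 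Because $\Rcal_G$ and $\Qcal_G$ are each non-increasing (Proposition \ref{prop:localmon} together with the continuous extensions in Lemmas \ref{lem:Dcal} and \ref{lem:Q}), their composition is non-decreasing, so the inequality $\bW^0 \leq \bW^1$ propagates: the sequence $(\bW^k)$ is non-decreasing. Define $\bW^\bI = \lim_k \bW^k$, which exists in $[0,\infty]^{\orE}$. Continuity of $\Qcal_G \circ \Rcal_G$ then yields $\bW^\bI = \Qcal_G\circ\Rcal_G(\bW^\bI)$, and $\bI(\bW^\bI) \geq \bI(\bW^0) = \bI$ is automatic.

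Next I would extract the equality $\int \Dcal_\circ(\bW^\bI)\,d\rho = \int F_\circ(\bI(\bW^\bI))\,d\rho$ by applying the preceding lemma with $\bY = \bW^\bI$: since $\bY' := \Qcal_G\circ\Rcal_G(\bY) = \bW^\bI = \bY$, both bullets of that lemma apply simultaneously, which pins the two integrals to the same value. For the final inequality, I would apply the same preceding lemma with $\bY = \bW^k$ and $\bY' = \bW^{k+1} \geq \bW^k$, which gives
\BEAS
\int F_\circ(\bI(\bW^k))\,d\rho([G,\circ]) \;\geq\; \int \Dcal_\circ(\bW^k)\,d\rho([G,\circ]).
\EEAS
The left-hand side equals $\int F_\circ(\bI)\,d\rho$ by the induction above, and the right-hand side converges to $\int \Dcal_\circ(\bW^\bI)\,d\rho$ by monotone convergence, using the monotonicity of $\Dcal_\circ$ in each coordinate (Proposition \ref{prop:localmon}). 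Combining this with the equality already obtained gives the stated chain.

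The main subtleties I would watch for are: (i) checking that spatial invariance is preserved along the sequence $(\bW^k)$, so that the preceding lemma is legitimately applicable at each iteration, which follows by the same induction used to prove spatial invariance of $\bY^k(z)$ earlier in the section; (ii) justifying that $\odeg(\rho) < \infty$ is enough to make the integrals finite and to validate monotone convergence applied through the mass-transport principle, which is needed because $\bW^k$ may carry infinite entries and $\Dcal_\circ$ is only continuously extended in Lemma \ref{lem:Dcal}; and (iii) ensuring the two bullet-point inequalities of the preceding lemma really do combine into equality at the fixed point $\bW^\bI$, which is the crux of why the infimum in Proposition \ref{prop:MG} is attained along this construction.
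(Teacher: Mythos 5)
Your proposal is correct and follows essentially the same route as the paper: iterate $\bW^{k+1}=\Qcal_G\circ\Rcal_G(\bW^k)$ from $\bW^0=\infty_{\bI}$, use the preceding lemma to get $\bI(\bW^k)=\bI$ and hence monotonicity of the sequence, pass to the limit to obtain the fixed point $\bW^{\bI}$, apply both bullets of that lemma at the fixed point to force the equality, and apply the first bullet along the sequence together with monotone convergence of $\Dcal_\circ$ to obtain the inequality. The subtleties you flag (spatial invariance along the iteration, finiteness of the integrals under $\odeg(\rho)<\infty$) are exactly the points the paper relies on as well.
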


\begin{lemma}\label{lem:WI}
If $\bY$ is a spatially invariant solution to $\bY=\Qcal_G\circ \Rcal_G(\bY)$, then we have $\bY=\bW^{\bI(\bY)}$.
\end{lemma}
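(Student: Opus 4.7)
Write $\bW = \bW^{\bI(\bY)}$. The plan is to first establish the one-sided inequality $\bW \leq \bY$, and then upgrade it to equality via an integral saturation argument for $\Dcal_\circ$. Since $\bW^0 = \infty_{\bI(\bY)}$ agrees with $\bY$ on coordinates where $\bY$ takes the value $\infty$ and vanishes elsewhere, $\bW^0 \leq \bY$ componentwise. By Proposition \ref{prop:localmon} both $\Rcal_G$ and $\Qcal_G$ are non-increasing, so their composition is non-decreasing; iterating against the fixed point $\bY = \Qcal_G \circ \Rcal_G(\bY)$ yields $\bW^k \leq \bY$ for every $k$, and hence $\bW \leq \bY$. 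Moreover every $\bW^k$ remains $\infty$ on the coordinates where $\bY = \infty$, so $\bI(\bW) = \bI(\bY)$.

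Next I would run the $\Dcal_\circ$ identity. The first lemma in the proof of Proposition \ref{prop:MG}, applied to $\bY = \bY' := \Qcal_G \circ \Rcal_G(\bY)$ (for which both hypotheses $\bY \leq \bY'$ and $\bY \geq \bY'$ hold trivially), gives
\[
\int \Dcal_\circ(\bY)\,d\rho([G,\circ]) = \int F_\circ(\bI(\bY))\,d\rho([G,\circ]).
\]
Lemma \ref{lem:monotI} applied with $\bI = \bI(\bY)$ then produces
\[
\int \Dcal_\circ(\bW)\,d\rho([G,\circ]) \leq \int F_\circ(\bI(\bY))\,d\rho([G,\circ]) = \int \Dcal_\circ(\bY)\,d\rho([G,\circ]).
\]
Combined with the pointwise inequality $\Dcal_\circ(\bW) \leq \Dcal_\circ(\bY)$ (from $\bW \leq \bY$ and the monotonicity in Lemma \ref{lem:Dcal}) and the finiteness of the integrals under the standing assumption $\odeg(\rho) < \infty$, this forces $\Dcal_\circ(\bW) = \Dcal_\circ(\bY)$ for $\rho$-almost every rooted network.

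The final step is to pass from this integrated equality to the pointwise identity. For each oriented edge $e \in \d\circ$ with $Y_e < \infty$ and $\Rcal_e(\bY) > 0$, the strict monotonicity of $\Dcal_\circ$ in the finite variable $Y_e$ (Proposition \ref{prop:localmon}) forces $\bW_e = \bY_e$; at coordinates where $Y_e = \infty$, equality was already noted in the first step. Lemma 2.3 of \cite{aldouslyons} combined with the mass-transport principle --- exactly as in the argument used in the proof of Proposition \ref{prop:fpoint} --- then promotes the a.s.\ equality along $\d\circ$ to equality of the full vectors in $[0,\infty]^{\orE}$.

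The main obstacle I anticipate is the saturation regime at a vertex $\circ$ with $|\{x \in \d\circ : Y_{x\to\circ} = \infty\}| \geq w_\circ$: in that case $\Dcal_\circ$ is constantly $w_\circ$ and becomes insensitive to the finite entries of $\bY[\d\circ]$, so the strict monotonicity argument of the previous paragraph does not apply directly. To deal with this, I would exploit the fixed-point equation $\bY = \Qcal_G \circ \Rcal_G(\bY)$ to express each such finite entry $Y_{x\to\circ}$ in terms of the $\Rcal$-values at neighbouring edges, at which vertices the non-degenerate strict-monotonicity argument has already forced $\bW$ and $\bY$ to coincide; this propagates the equality to the saturated vertices as well.
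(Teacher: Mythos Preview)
Your overall strategy coincides with the paper's: show $\bW\le\bY$ by iterating the monotone map $\Qcal_G\circ\Rcal_G$ from $\infty_{\bI(\bY)}$, match the integrals of $\Dcal_\circ$ through $F_\circ(\bI(\cdot))$, and then use monotonicity of $\Dcal_\circ$ to upgrade to equality. The paper's proof is in fact just as terse on the last step as your initial pass; it simply writes ``Since $\Dcal_\circ$ is increasing and $\bW^{\bI(\bY)}\le\bY$, we finally get equality.''

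Two issues with your write-up. First, a logical slip in the integral step: from Lemma~\ref{lem:monotI} you record only $\int\Dcal_\circ(\bW)\,d\rho\le\int F_\circ(\bI(\bY))\,d\rho=\int\Dcal_\circ(\bY)\,d\rho$, and then combine this with the pointwise bound $\Dcal_\circ(\bW)\le\Dcal_\circ(\bY)$. But two inequalities pointing the same way do not force a.s.\ equality. You need the full chain of \emph{equalities}
\[
\int\Dcal_\circ(\bW)\,d\rho=\int F_\circ(\bI(\bW))\,d\rho=\int F_\circ(\bI(\bY))\,d\rho=\int\Dcal_\circ(\bY)\,d\rho,
\]
the first from Lemma~\ref{lem:monotI}, the second from your own observation $\bI(\bW)=\bI(\bY)$, the third from the unnamed lemma applied to the fixed point $\bY$. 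The paper makes this chain explicit.

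Second, the saturation obstacle you flag is genuine --- when $|\{e\in\partial\circ:Y_e=\infty\}|\ge w_\circ$, the extended $\Dcal_\circ$ of Lemma~\ref{lem:Dcal} is identically $w_\circ$ and carries no information about the finite entries, so Proposition~\ref{prop:localmon}'s strict monotonicity is unavailable. The paper does not spell out how this is handled. However, your proposed propagation fix is not convincing as sketched: from a discrepancy $W_{x\to\circ}<Y_{x\to\circ}$ you can indeed push to some $\ell\in\partial x\setminus\circ$ with $X^W_{\ell\to x}>X^Y_{\ell\to x}$, but nothing prevents $Y_{x\to\ell}=\infty$, in which case $\ell$ may again have exactly $w_\ell$ infinite incoming messages and the degeneracy recurs. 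The argument could loop indefinitely through such borderline vertices without ever reaching one where strict monotonicity bites. To make this work you would need an additional ingredient --- for instance, a mass-transport argument ruling out such infinite chains $\rho$-a.s., or a direct comparison at the level of the $\bX$-variables rather than $\Dcal_\circ$.
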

\begin{proof}
Since $\bW^0\leq \bY$, we have $\bW^k\leq \bY$ and $\bW^{\bI(\bY)}\leq \bY$, in particular $\bI(\bW^{\bI(\bY)})\leq \bI(\bY)$. By previous lemma, we also have $\bI(\bW^{\bI(\bY)})\geq \bI(\bY)$ so that we have indeed equality and hence
\BEAS
\int \Dcal_\circ(\bW^{\bI(\bY)})d\rho([G,\circ]) = \int F_\circ(\bI(\bW^{\bI(\bY)}))d\rho([G,\circ]) = \int F_\circ(\bI(\bY))d\rho([G,\circ])=\int \Dcal_\circ(\bY)d\rho([G,\circ]).
\EEAS
Since $\Dcal_\circ$ is increasing and $\bW^{\bI(\bY)}\leq \bY$, we finally get equality.
\end{proof}

\iffalse
\begin{proposition}
Let $\rho$ be a unimodular probability measure on $\Gcal_*$ with $\odeg(\rho)<\infty$, then
\BEAS
\lim_{z\to \infty}\int D_\circ(\Yb(z)) d\rho([G,\circ]) = \inf\left\{ \int F_\circ(\bI)d\rho([G,\circ]) \right\}d\rho([G,\circ]),
\EEAS
where the infimum is over all spatially invariant solution of $\bI=\Pcal_G\circ\Pcal_G(\bI)$.
\end{proposition}
\begin{proof}
\fi
We are now ready to prove Proposition \ref{prop:MG}.
We first prove that $z\mapsto \frac{\bY(z)}{z}$ and $z\mapsto \bY(z)$ are respectively non-increasing and non-decreasing. We need only to prove that this result is correct for $\bY^k$ for any $k\geq 0$ and this is shown by a simple induction on $k$ as follows: consider $z\leq z'$; if $\bY^k(z)\leq \bY^k(z')$ then we have $\frac{\bY^{k+1}(z)}{z} \geq \frac{\bY^{k+1}(z')}{z'}$ since $\Rcal_G$ is non-increasing by Proposition \ref{prop:localmon}; if $\frac{\bY^{k}(z)}{z} \geq \frac{\bY^{k}(z')}{z'}$ then we have $\bY^{k+1}(z)\leq \bY^{k+1}(z')$ since $(z, \Xb)\mapsto z\Rcal_G(z\Xb)$ is increasing in $z$ and non-increasing in $\Xb$ still by Proposition \ref{prop:localmon}. 

Hence we can define $ \lim_{z\to \infty}\uparrow \bY(z)=\bY \in [0,\infty]^{\orE}$ and $ \lim_{z\to \infty}\downarrow \frac{\bY(z)}{z}=\bX \in [0,1]^{\orE}$ so that by passing to the limit in $\frac{\bY^{k+1}(z)}{z}=\Rcal_G(\bY^k(z))$ and $\bY^{k+1}(z) = z\Rcal_G\left(z \frac{\bY^{k}(z)}{z}\right)$, we have $\bX=\Rcal_G(\bY)$ and $\bY =\Qcal_G(\bX)$. 
Again since the pointwise limit of a sequence of measurable functions is measurable, we see that $(\bY,\bX)$ is spatially invariant and $\bY=\Qcal_G\circ \Rcal_G(\bY)$.

Let now $\rho^Z$ be a unimodular probability measure on $\Gcal_*$ with marks on vertices $\bw$ and on edges $\bZ$ such that $\bZ=\Qcal_G\circ \Rcal_G(\bZ)$ $\rho^Z$-a.s. The marginal $\rho$ of $\rho^Z$ where marks on edges are ignored is still unimodular and by definition $\rho$ and $\rho^Z$ are coupled. By definition, we have for any $\bX\in [0,1]^{\orE}$ and $z>0$, $z\Rcal_G(z\bX)\leq \Qcal_G(\bX)$ so that an easy induction on $k$ shows that: $\bY^{2k}(z)\leq \bZ$, $\rho^{\bZ}$-a.s. Letting first $k$ and then $z$ tend to infinity, we see that $\bY\leq \bZ$, $\rho^{\bZ}$-a.s.

Let $\bY^0=0$ and for $k\geq 0$, $\bY^{k+1}=\Qcal_G\circ \Rcal_G(\bY^k)$ so that $\bY^k$ is non-decreasing and (by previous result) converges to $\bY$, the smallest solution to $\bY= \Qcal_G\circ \Rcal_G(\bY)$. 
We have
\BEAS
\lim_{z\to \infty}\int \Dcal_\circ(\Yb(z)) d\rho([G,\circ]) =\int \Dcal_\circ(\bY)d\rho=\int F_\circ(\bI(\bY))d\rho([G,\circ]).
\EEAS

Let $\bI$ be a spatially invariant solution of $\bI=\Pcal_G\circ\Pcal_G(\bI)$, then by Lemma \ref{lem:monotI}, we have
\BEAS
\int F_\circ(\bI(\bW^\bI))d\rho([G,\circ])\leq \int F_\circ(\bI)d\rho([G,\circ]),\mbox{ and since $\bW^\bI\geq \bY$},
\EEAS
by Lemma \ref{lem:WI}, we get,
\BEAS
\int \Dcal_\circ(\bY)d\rho([G,\circ])\leq \int \Dcal_\circ(\bW^{\bI})d\rho([G,\circ])=\int F_\circ(\bI(\bW^\bI))d\rho([G,\circ]).
\EEAS

In the case where $\rho$ is concentrated on trees, it is easily seen that $\bY$ is locally independent so that the last claim of the proposition follows.
%\end{proof}

\end{proof}
\begin{remark}
With the notation introduced above, we define $\bI^k=\bI(\bY^k)$, a sequence that monotonically converges to $\bI^-$ the smallest solution to $\bI^-=\Pcal_G\circ\Pcal_G(\bI^-)$. We have $\bI(\bY)\geq \bI^-$. This implies that $\infty_{\bI^-}\leq \bY$ so that we have $\bY =\bW^{\bI^-}$. Hence by Lemma \ref{lem:monotI}, we get
\BEAS
\int \Dcal_\circ(\bY)d\rho([G,\circ]) =\int F_\circ(\bI(\bY))d\rho([G,\circ])\leq \int F_\circ(\bI^-)d\rho([G,\circ]).
\EEAS
In particular, it is NOT necessary the case that $\bI(\bY)$ achieving
the minimum in (\ref{eq:inflim}) is the smallest solution of $\bI=\Pcal_G\circ\Pcal_G(\bI)$.
\end{remark}

\section{From finite graphs to unimodular trees}\label{sec:fininf}

In this section, we show that the analysis made in previous section on infinite unimodular networks allow to compute the maximum size of a spanning subgraph of a sequence of finite graphs when the number of vertices tend to infinity.
We consider a sequence of finite networks $(G_n = (V_n,E_n,\bw^n))_{n\in \N}$.
For such sequence, we will use the notion of local weak convergence introduced by \cite{besc01} and \cite{alst04} (see also \cite{aldouslyons}).
Recall that we show in Section \ref{sec:unimod} that uniform rooting is a natural procedure to construct a unimodular measure on $\Gcal_*$ from a finite network $G$. This measure is denoted by $U_G$. For $\rho$ a unimodular probability measure on $\Gcal_*$, we say the random weak limit of $G_n$ is $\rho$ if $U_{G_n}$ converges weakly to $\rho$ (see \cite{billingsley} for details on weak convergence). 

\begin{proposition}\label{prop:inverslim}
Let $(G_n=(V_n,E_n,\bw^n))_{n\in \N}$ be a sequence of finite networks with random weak limit $\rho\in\BUT$, with $|E_n|=O(|V_n|)$ and $\sup_{v\in V_n}w^n_v\leq K$ for a fixed $K>1$. If $\rho$ is concentrated on trees, then we have
\BEAS
\lim_{n\to\infty}\frac{1}{|V_n|} M(G_n) = \int \Dcal_\circ(\bY)d\rho([G,\circ]).
\EEAS
\end{proposition}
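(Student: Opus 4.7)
The plan is to match, via the tools developed in the two preceding subsections, the quantities $\frac{M(G_n)}{|V_n|}$ and $\int \Dcal_\circ(\bY)\,d\rho([G,\circ])$. Combining the identity (\ref{eq:limM}) with uniform rooting gives
\BEAS
\frac{2M(G_n)}{|V_n|}=\EE_{U_{G_n}}\!\left[\lim_{z\to\infty} D_\circ^{z}(G_n)\right],
\EEAS
where $D_\circ^{z}(G_n)$ denotes the mean degree of the root under $\mu^z_{G_n}$. The hypothesis $|E_n|=O(|V_n|)$ forces $\odeg(\rho)<\infty$ by Fatou, so Proposition \ref{prop:MG} applies on the limiting side and yields $\int \Dcal_\circ(\bY)\,d\rho=\lim_{z\to\infty}\int \Dcal_\circ(\bY(z))\,d\rho$. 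Because $\frac{d}{dz}\EE_{\mu^z_{G_n}}[|F|] = \mathrm{Var}_{\mu^z_{G_n}}(|F|)/z\geq 0$, the map $z\mapsto \EE_{U_{G_n}}[D_\circ^{z}(G_n)]$ is non-decreasing and uniformly bounded by $K$, so monotone convergence moves the $z$-limit outside the expectation on the finite side. The problem therefore splits into two pieces: (i) for each fixed $z>0$, prove $\EE_{U_{G_n}}[D_\circ^{z}(G_n)]\to \int \Dcal_\circ(\bY(z))\,d\rho$ as $n\to\infty$; and (ii) interchange the monotone limits in $n$ and $z$.

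For step (i) I would approximate $D_\circ^{z}(G_n)$ by the depth-$k$ belief-propagation value $\Dcal_\circ(\bY^k(z))$ obtained by iterating $\bY^{j+1}=z\Rcal_{G_n}(\bY^j)$ on $G_n$ from $\bY^0=\bold{0}$. The iterate $\bY^k_{\ore}(z)$ depends only on the ball of radius $k$ around the head of $\ore$, so on the event that $B_{2k+1}(\circ)$ is a tree, Proposition \ref{prop:Recfinite} applied to that tree together with the monotonicity of Proposition \ref{prop:localmon} yields the sandwich $\Dcal_\circ(\bY^{2k}(z))\leq D_\circ^{z,\mathrm{free}} \leq \Dcal_\circ(\bY^{2k+1}(z))$ for the free-boundary Gibbs mean degree on that ball. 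Since $\rho$ concentrates on trees, $U_{G_n}(B_{2k+1}(\circ)\ \mathrm{is\ a\ tree})\to 1$ for every fixed $k$, and since $\Dcal_\circ(\bY^j(z))\leq K$ is a bounded functional of a bounded-radius neighborhood, local weak convergence gives $\EE_{U_{G_n}}[\Dcal_\circ(\bY^{j}(z))]\to \int \Dcal_\circ(\bY^{j}(z))\,d\rho$ for $j\in\{2k,2k+1\}$. Letting $k\to\infty$ and using $\bY^{2k}(z),\bY^{2k+1}(z)\to \bY(z)$ from Proposition \ref{prop:fpoint} closes step (i), provided one also shows that the truncation error $D_\circ^{z}(G_n)-D_\circ^{z,\mathrm{free}}$ is $o(1)$ uniformly in $n$ on the tree event; this is the correlation-decay input.

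For step (ii) the joint monotonicity established above gives a short $\epsilon$-argument: choose $z_\epsilon$ with $\int \Dcal_\circ(\bY(z_\epsilon))\,d\rho\geq \int \Dcal_\circ(\bY)\,d\rho-\epsilon$; then
\BEAS
\liminf_n \EE_{U_{G_n}}\!\left[\lim_z D_\circ^{z}(G_n)\right]\geq \lim_n \EE_{U_{G_n}}[D_\circ^{z_\epsilon}(G_n)] = \int \Dcal_\circ(\bY(z_\epsilon))\,d\rho\geq \int \Dcal_\circ(\bY)\,d\rho-\epsilon.
\EEAS
The matching upper bound follows by dominating $D_\circ^{z}(G_n)$ by the depth-$(2k+1)$ BP upper bound on the tree event (with the complement contributing at most $K\cdot U_{G_n}(B_{2k+1}(\circ)\ \mathrm{not\ tree})=o(1)$), passing $n\to\infty$ and then $k\to\infty$, and identifying the resulting quantity via Proposition \ref{prop:MG}.

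The main obstacle is the correlation-decay estimate at the heart of step (i): $D_\circ^{z}(G_n)$ is a genuinely non-local functional on a graph with cycles, whereas the BP iterates only see a bounded-radius neighborhood, so the sandwich must be complemented by a quantitative bound showing that the Gibbs marginal of the root is determined, up to $o(1)$, by a ball of fixed radius. This is precisely what the uniqueness statement of Proposition \ref{prop:fpoint} encodes at the level of infinite unimodular networks; transferring it back to finite locally-tree-like graphs using the negative-correlation inequality of \cite{pe00}, the monotonicity of Proposition \ref{prop:localmon}, and the local weak convergence of $U_{G_n}$ to $\rho$ is where the main technical effort must be concentrated.
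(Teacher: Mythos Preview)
Your identification of the ``main obstacle'' is misplaced, and the actual gap is elsewhere.

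For step (i): you worry about a correlation-decay estimate separating $D_\circ^{z}(G_n)$ from a free-boundary quantity $D_\circ^{z,\mathrm{free}}$, but no such estimate is needed. The spatial Markov property (Lemma \ref{lem:treeapprox} in the paper) gives the sandwich $\Dcal_\circ(\bY^{2k}(z))\leq D_\circ^{z}(G_n)\leq \Dcal_\circ(\bY^{2k+1}(z))$ directly for the full Gibbs marginal on $G_n$, whenever the ball of radius $2k+2$ around the root is a tree. The reason is that conditioning $\mu_{G_n}^z$ on the configuration outside that ball produces a Gibbs measure on the tree with some boundary conditions; these are encoded by leaf $Y$-values lying between $0$ and $z$, and monotonicity (Proposition \ref{prop:localmon}) then places the root marginal between the two BP extremes. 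So step (i) closes immediately by local weak convergence and Proposition \ref{prop:fpoint}, with no extra input.

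The genuine gap is your upper bound in step (ii). Dominating $D_\circ^{z}(G_n)$ by $\Dcal_\circ(\bY^{2k+1}(z))$ and sending $z\to\infty$ yields a bound of the form $\lim_{k}\lim_{z}\int\Dcal_\circ(\bY^{2k+1}(z))\,d\rho$, and for a doubly monotone array (decreasing in $k$, increasing in $z$) the order of limits matters: in general $\lim_k\lim_z\geq\lim_z\lim_k$, so your bound need not coincide with $\int\Dcal_\circ(\bY)\,d\rho$. The Remark following Proposition \ref{prop:MG} shows exactly this phenomenon: iterating at $z=\infty$ can land on a strictly larger value of $\int F_\circ(\bI)\,d\rho$ than the minimum. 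The paper closes the upper bound by a completely different device: writing $\int D_\circ^z\,d\rho_n = zP'_{G_n}(z)/(|V_n|P_{G_n}(z))$ and integrating the monotonicity in $z$ from $1$ to $s$ gives
\[
\frac{M(G_n)}{|V_n|}\;\leq\;\int D_\circ^{s}\,d\rho_n \;+\;\frac{|E_n|}{|V_n|}\,\frac{\log 2}{\log s},
\]
using $P_{G_n}(s)\geq s^{M(G_n)}$ and $P_{G_n}(1)\leq 2^{|E_n|}$. This bounds the $z=\infty$ quantity $M(G_n)/|V_n|$ by a fixed-$s$ Gibbs average plus an error that vanishes as $s\to\infty$ (here $|E_n|=O(|V_n|)$ is used), after which one lets $n\to\infty$ via step (i) and then $s\to\infty$. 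This partition-function trick is the missing idea in your outline.
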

\begin{proof}
We start be stating a simple consequence of spatial Markov property, see Lemma 4 in \cite{salez}.
\begin{lemma}\label{lem:treeapprox}
For a finite simple graph $G=(V,E)$ with a vector of degree constraints $\bw$, let $v\in V$. If the induced subgraph obtained by keeping only vertices at graph distance at most $2k+2$ from $v$ is a tree, then we have for any $z>0$,
\BEAS
\Dcal_v(\bY^{2k}(z))\leq D_v^z=\sum_{e\in \d v}\mu_G^z\left(B_e=1\right)\leq \Dcal_v(\bY^{2k+1}(z)).
\EEAS
\end{lemma}

For a finite graph $G=(V,E)$ and $v\in V$, we denote by $\chi_k(G,v)$ the indicator function that the induced subgraph obtained by keeping only vertices at graph distance at most $2k+2$ from the vertex $v$ is a tree.
By Lemma \ref{lem:treeapprox}, we have
\BEAS
\Dcal_{v}(\bY^{2k}(z))\chi_k(G,v)\leq D_{v}^z=\sum_{e\in \d v}\mu_G^z\left(B_e=1\right)\leq \Dcal_v(\bY^{2k+1}(z))\chi_k(G,v)+(1-\chi_k(G,v))w_v
\EEAS
Assuming that, we have $w_v\leq K$ for all $v$, we get
\BEAS
\frac{1}{|V|} \sum_{v\in V}\Dcal_{v}(\bY^{2k}(z))\chi_k(G,v)\leq \frac{1}{|V|} \sum_{v\in V}D_{v}^z\leq \frac{1}{|V|} \sum_{v\in V}\Dcal_{v}(\bY^{2k+1}(z))\chi_k(G,v)+\frac{K}{|V|}\sum_{v\in V}(1-\chi_k(G,v)).
\EEAS
Let now $G_n$ be a sequence of finite networks with random weak limit $\rho$.
For simplicity, let denote by $\rho_n$ be the probability measure $U_{G_n}$ on $\Gcal_*$ obtained by taking as root of $G_n$ a vertex in $V_n$ uniformly at random. We can rewrite previous inequalities as follows:
\BEA\label{eq:treap}
\int \Dcal_{\circ}(\bY^{2k}(z))\chi_k(G_n,\circ)d\rho_n\leq\int D_{\circ}^zd\rho_n\leq 
\int \Dcal_{\circ}(\bY^{2k+1}(z))\chi_k(G_n,\circ)d\rho_n+K\int\left( 1-\chi_k(G_n,\circ)\right) d\rho_n
\EEA
Note that we have $\Dcal_{\circ}(\bY^{k}(z))\leq K$ and by definition of the metric on $\Gcal_*$, the left-hand side and the right-hand side which depends only on the ball of (graph-distance) radius $2k+2$ around the root are continuous on $\Gcal_*$ so that we can take the limit in (\ref{eq:treap}). Since $\rho$ is concentrated on trees, the function $\chi_k$ is one on the support of $\rho$ and we get
\BEAS
\int \Dcal_{\circ}(\bY^{2k}(z))d\rho\leq\liminf_n\int D_{\circ}^zd\rho_n\leq\limsup_n\int D_{\circ}^zd\rho_n\leq 
\int \Dcal_{\circ}(\bY^{2k+1}(z))d\rho.
\EEAS
By Proposition \ref{prop:MG}, we can take the limit as $k$ tends to infinity, to finally have for any $z>0$,
\BEA\label{eq:limn}
\lim_n\int D_{\circ}^zd\rho_n= \int \Dcal_{\circ}(\bY(z))d\rho.
\EEA
Recall that by definition,
\BEAS
\int D_{\circ}^zd\rho_n=\frac{1}{|V_n|}\sum_{v\in V_n} D^z_{v}\leq \frac{1}{|V_n|}M(G_n),
\EEAS
so that we get by letting first $n$ tend to infinity, using (\ref{eq:limn}) and then letting $z$ tend to infinity
\BEAS
\int \Dcal_{\circ}(\bY)d\rho=\lim_{z\to\infty}\int \Dcal_{\circ}(\bY(z))d\rho\leq \liminf_n \frac{1}{|V_n|}M(G_n).
\EEAS
To prove the upper bound, we start with the following observation:
\BEAS
\frac{zP'_{G_n}(z)}{|V_n|P_{G_n}(z)} = \int D_\circ^z d\rho_n,
\EEAS
where the polynomial $P_{G_n}$ is the one appearing in the definition of Gibbs measure $\mu_{G_n}$, see (\ref{eq:gib}).
Differentiating once more, we get
\BEAS
z|V_n|\frac{d}{dz}\int D_\circ^z d\rho_n &=& \frac{zP'_{G_n}(z)}{P_{G_n}(z)}+\frac{z^2P''_{G_n}(z)}{P_{G_n}(z)}-\frac{z^2P'_{G_n}(z)^2}{P_{G_n}(z)^2}\\
&=& |V_n|\left( \int \left(D_\circ^z\right)^2 d\rho_n- \left(\int D_\circ^z d\rho_n\right)^2\right)\geq 0,
\EEAS
by Jensen's inequality.
In particular, we see that $z\mapsto \int D_\circ^z d\rho_n$ is non-decreasing in $z>0$ so that we have for any $1<s$,
\BEAS
\int_1^s z^{-1}\int D_\circ^z d\rho_n dz &\leq& \int_1^sz^{-1}dz \int D_\circ^s d\rho_n\\
\frac{1}{|V_n|} \log \frac{P_{G_n}(s)}{P_{G_n}(1)}&\leq& \log s \int D_\circ^s d\rho_n.
\EEAS
Moreover, we clearly have $P_{G_n}(1)\leq 2^{|E_n|}$ and $P_{G_n}(s)\geq s^{M(G_n)}$, so that we get
\BEAS
\frac{1}{|V_n|\log s}\left( M(G_n)\log s - |E_n| \log 2\right)\leq\int D_\circ^s d\rho_n,
\EEAS
or reorganising the terms
\BEAS
\frac{1}{|V_n|} M(G_n) \leq \int D_\circ^s d\rho_n + \frac{|E_n|}{|V_n|}\frac{\log 2}{\log s}
\EEAS
Hence by letting first $n$ tend to infinity (using the fact that $|E_n|=O(|V_n|)$ and (\ref{eq:limn})) and then letting $z$ tend to infinity, we get
\BEAS
\limsup_n\frac{1}{|V_n|} M(G_n) \leq \int \Dcal_\circ(\bY) d\rho,
\EEAS
which concludes the proof.
\end{proof}

\section{Computation for branching processes and asymptotics for large graphs}\label{sec:comp}

In this section, we consider bipartite graphs $G=(V, E)$ where $V$ can be divided into two disjoint sets $A$ and $B$ such that every edge connects a vertex in $A$ to one in $B$. We also denote by $\bw^A$ and $\bw^B$ the vectors of degree constraints associated to vertices in $A$ and $B$ respectively.
We are mainly interested in sequences of finite networks $(G_n=(V_n,E_n),\bw^A_n,\bw^B_n)_{n\in\N}$ simply denoted by $G_n$ in the sequel, whose size $|V_n|$ diverges to infinity and such that $|E_n|=O(|V_n|)$.

Our main assumption will be that the sequence of finite networks $(G_n)_{n\in \N}$ admits a random weak limit. Motivated by applications, we now describe a natural limit (arising when the network $G_n$ itself is random as described in the Introduction) which we call Bipartite Unimodular Galton Watson Trees (BUGWT).
A BUGWT is a branching process parametrised by two distributions: a distribution $\pi^A$ on $\N^2$ corresponding to joint distribution of the degree of vertices in $A$ and their associated constraints and a distribution $\pi^{B}$ on $\N^2$ corresponding to the joint distribution of the degree and mark of vertices in $B$. 
We will always assume that
\BEAS
m^A = \sum_{k,j\geq 1} k\pi^A_{k,j}<\infty &\mbox{ and }& m^B = \sum_{k,j\geq 1} k\pi^{B}_{k,j}<\infty.
\EEAS
In particular, the size-biased distributions associated to $\pi^A$ and $\pi^B$ are well defined:
\BEAS
\hpi^A_{n,\ell} =\frac{(n+1)\pi^A_{n+1,\ell}}{m^A} \mbox{ and, } \hpi^{B}_{n,\ell} =\frac{(n+1)\pi^{B}_{n+1,\ell}}{m^B}.
\EEAS
A BUGWT is then a branching process defined as follows: with probability $\frac{m^B}{m^A+m^B}$, the root is of type $A$ and has a random number of children and a random constraint drawn according to the distribution $\pi^A$; all odd generation genitors have a distribution for the couple offspring, constraint given by $\hpi^B$ and all even generation genitors this distribution is $\hpi^A$; similarly with probability $\frac{m^A}{m^A+m^B}$, the root is of type $B$ and has a random number of children and a random constraint drawn according to the distribution $\pi^B$; all odd generation genitors have a distribution for the couple offspring, constraint given by $\hpi^A$ and all even generation genitors this distribution is $\hpi^B$. The set of distributions of BUGWT is clearly a subset of the unimodular probability measure on $\Gcal_*$ and we denote it by $\BUT\subset \Ucal$.

Before stating our main result in this framework, we introduce some notations: we denote by $(D^A,W^A)$ (resp. $(N^A,W^A)$) a random variable with distribution $\pi^A$ (resp. $\hpi^A$) and by $(D^B,W^B)$ (resp. $(N^B,W^B)$) a random variable with distribution $\pi^{B}$ (resp. $\hpi^{B}$).
For $0\leq x\leq 1$, let $D^A(x)$ be the thinning of $D^A$ obtained by taking $D^A$ points and randomly and independently keeping each of them with probability $x$ so that
$\PP(D^A(x)=k) =\sum_{n\geq k}\sum_\ell\pi^A_{n,\ell}{n \choose k} x^k(1-x)^{n-k}$.
We define similarly $N^B(x), D^B(x), N^B(x)$.

\begin{theorem}\label{the:limM}
Let $(G_n)_{n\in \N}$ be a sequence of finite (simple) bipartite
networks with random weak limit $\rho\in\BUT$ such that $D^A$
(resp. $D^B$) and $W^A$ (resp. $W^B$) are independent with finite means.
\iffalse
\BEAS
\PP\left(0<W^A<D^A\right)>0\mbox{ and, } \PP\left(0<W^B<D^B\right)>0.
\EEAS 
\fi
If $|E_n|=O(|A_n|)$ where $A_n$ is one subset of the partition of vertices of $G_n$, then we have
\BEAS
\lim_{n\to\infty}\frac{1}{|A_n|} M(G_n) &=& %\inf_{p\in [0,1]}\{\Fcal^A(p), p=g^B\circ g^A(p)\}=
\inf_{q\in [0,1]}\{\Fcal^A(q), q=g^A\circ g^B(q)\}=\inf _{q\in[0,1]} \Fcal^A(q),
\EEAS
where 
\BEAS
%\Fcal^A(p) &=& \EE\left[W^A\wedge D^A(p)\right]+\frac{\EE[D^A]}{\EE[D^B]}\EE\left[ W^B\ind\left( D^B(g^A(p))\geq W^B+1\right)\right],\\
\Fcal^A(q) &=& \EE\left[W^A\wedge D^A(g^B(q))\right]+\frac{\EE[D^A]}{\EE[D^B]}\EE\left[ W^B\ind\left( D^B(q)\geq W^B+1\right)\right],\mbox{ with}\\
g^A(p) &=& \PP\left(N^A(p)<W^A\right) \mbox{ and, } g^B(q) = \PP\left(N^B(q)<W^B\right).
\EEAS
\end{theorem}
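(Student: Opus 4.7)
The plan is to combine Propositions \ref{prop:inverslim} and \ref{prop:MG} to rewrite $\lim_n M(G_n)/|A_n|$ as an infimum over spatially invariant locally independent solutions of $\bI = \Pcal_G \circ \Pcal_G(\bI)$ on the unimodular limit $\rho$, then parametrise these solutions by a single scalar $q \in [0,1]$ using the branching structure, and compute the integral $\int F_\circ(\bI) d\rho$ in closed form.

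Since $\rho \in \BUT$ is concentrated on trees and $|E_n|=O(|A_n|)=O(|V_n|)$, Proposition \ref{prop:inverslim} applies (with a preliminary truncation of $W^A, W^B$ if they are unbounded, harmless by finite-mean integrability), giving $\lim_n M(G_n)/|V_n| = \int \Dcal_\circ(\bY) d\rho$. Combined with $|V_n|/|A_n| \to (m^A+m^B)/m^B$ from the bipartite edge-count identity $|A_n|\EE[D^A] \sim |E_n| \sim |B_n|\EE[D^B]$, and with Proposition \ref{prop:MG}, this expresses $\lim_n M(G_n)/|A_n|$ as a constant times $\inf \int F_\circ(\bI) d\rho$, the infimum taken over admissible $\bI$.

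Next, I parametrise the admissible solutions. On the BUGWT, local independence and spatial invariance force messages on size-biased $A\to B$ edges to be i.i.d.\ $\text{Ber}(q)$ and on size-biased $B\to A$ edges to be i.i.d.\ $\text{Ber}(p)$; expanding $\Pcal_G\circ\Pcal_G$ in terms of the size-biased offspring $\hpi^A,\hpi^B$ yields the fixed-point equations $q=g^A(p)$, $p=g^B(q)$, equivalent to $q=g^A\circ g^B(q)$. Splitting $\int F_\circ(\bI) d\rho$ on root type (mass $m^B/(m^A+m^B)$ on type $A$, $m^A/(m^A+m^B)$ on type $B$), the $D^A$ children of an $A$-root each send a message distributed as $\text{Ber}(g^B(q))$---and the same holds for $\Pcal_{\circ\to x}(\bI)$ by the identical subtree computation---so $\sum_{x\in\d\circ}I_{x\to\circ}$ and $\sum_{x\in\d\circ}\Pcal_{\circ\to x}(\bI)$ are both distributed as the thinning $D^A(g^B(q))$; analogously, a $B$-root produces $D^B(q)$. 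The resulting four-term expression collapses to $\Fcal^A(q)$ after applying the bipartite edge-conservation identity
\BEAS
\EE\left[D^A(g^B(q))\,\ind(D^A(g^B(q))\leq W^A)\right] = \tfrac{m^A}{m^B}\,\EE\left[D^B(q)\,\ind(D^B(q)\leq W^B)\right],
\EEAS
which I would derive from a direct Mass-Transport argument on $\rho$ (the expected number of unsaturated half-edges from each side of the bipartition agreeing up to the ratio $m^A/m^B$).

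Finally, to drop the constraint $q=g^A\circ g^B(q)$ from the infimum, I would differentiate $\Fcal^A$ on $[0,1]$ using standard identities for the derivatives of $\EE[W^A\wedge D^A(x)]$ and $\EE[W^B\ind(D^B(x)\geq W^B+1)]$; a direct computation shows $(\Fcal^A)'(q)$ factors as a nonnegative quantity times $(g^A(g^B(q))-q)$, so critical points on $[0,1]$ are exactly fixed points of $g^A\circ g^B$, and hence the constrained and unconstrained infima coincide. The main obstacle is the edge-conservation identity together with the attendant collapse of four expectations into the compact two-term form $\Fcal^A(q)$; once this identity is established, the rest follows from the machinery of Sections \ref{sec:gibbs} and \ref{sec:fininf} and elementary calculus.
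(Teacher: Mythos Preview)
Your plan is correct and follows the paper's strategy closely: both arguments invoke Propositions~\ref{prop:inverslim} and~\ref{prop:MG}, parametrise the locally independent spatially invariant solutions of $\bI=\Pcal_G\circ\Pcal_G(\bI)$ on the BUGWT by a pair $(p,q)$ with $p=g^B(q)$, $q=g^A(p)$, and then differentiate $\Fcal^A$ to remove the fixed-point constraint. The substantive difference lies in how the formula for $\Fcal^A(q)$ is reached. The paper does \emph{not} evaluate $\int F_\circ(\bI)\,d\rho$ on both root types; instead it works directly with $\int \Dcal_\circ(\bY)\ind(\circ\in A)\,d\rho$, applies Lemma~\ref{lem:Dcal} to split off the $w_\circ\wedge\sum I$ term, and then uses one application of the Mass-Transport Principle together with Lemma~\ref{lem:Q} to convert the remaining ``finite'' part into the $B$-rooted term $\EE[W^B\ind(D^B(q)\geq W^B+1)]$. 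This yields the two-term expression in a single stroke, without ever writing down the four-term expansion.

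Your route through the four-term expression is also valid, and your edge-conservation identity is genuine --- but it does not require a Mass-Transport argument. A one-line size-bias computation gives
\[
\EE\!\left[D^A(p)\,\ind\!\left(D^A(p)\leq W^A\right)\right]
= p\,m^A\,\PP\!\left(N^A(p)<W^A\right)
= p\,m^A\,g^A(p),
\]
and symmetrically $\EE[D^B(q)\ind(D^B(q)\leq W^B)] = q\,m^B\,g^B(q)$; at a fixed point $q=g^A(p)$, $p=g^B(q)$ both sides equal $m^A m^B\,pq$, which is exactly your identity. Note that this equality holds \emph{only} at fixed points, so your four-term expression and $\Fcal^A(q)$ agree only there; this is harmless because the subsequent calculus step establishes $\inf_{q=g^A\circ g^B(q)}\Fcal^A(q)=\inf_{q\in[0,1]}\Fcal^A(q)$ as a statement purely about the function $\Fcal^A$. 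One minor correction: the derivative factors as $(\Fcal^A)'(q)=\phi_A'(1)\,h^B(q)\,(q-g^A\circ g^B(q))$, i.e.\ with $(q-g^A\circ g^B(q))$ rather than the opposite sign you wrote; this does not affect your conclusion about critical points.
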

\begin{proof}
Let $\rho^A$ (resp. $\rho^B$) be the probability measure on $\Gcal_*$ obtained from $\rho$ by conditioning on the event that the root is of type $A$ (resp. type $B$): $\circ \in A$ (resp. $\circ \in B$). The fact that
\BEAS
\lim_{n\to\infty}\frac{1}{|A_n|} M(G_n) = \lim_{z\to \infty}\int \Dcal_\circ(\bY(z))d\rho^A([G,\circ]),
\EEAS
follows from Proposition \ref{prop:inverslim}. We will now use Proposition \ref{prop:MG} to compute the right-hand term.

We first adapt the expression in (\ref{eq:inflim}) to our bipartite case and express it as a function of $\rho^A$. 
For $x\in [0,\infty]$, we define $I(x)=\ind(x=\infty)$ and $\bI(.)$ acts similarly on vectors (componentwise).
By Lemma \ref{lem:Dcal}, we have for any spatially invariant $\bY$: %,
\BEAS
%\lim_{z\to \infty}\int D_\circ(\Yb(z))\ind(\circ \in A) d\rho([G,\circ])&=&
\lefteqn{\int \Dcal_\circ(\bY)\ind(\circ \in A)d\rho([G,\circ])=}\\
&&\int\left(\sum_{x\in \d \circ}\frac{Y_{x\to\circ}\Rcal_{x\to\circ}(\bY)}{1+Y_{x\to\circ}\Rcal_{x\to\circ}(\bY)}\ind(Y_{x\to\circ}<\infty)+w_{\circ}\wedge \sum_{x\in \d \circ} I(Y_{x\to \circ}) \right)\ind(\circ \in A)d\rho([G,\circ])
\EEAS
We now consider a spatially invariant $\bY$ solution of $\bY=\Qcal_G\circ \Rcal_G(\bY)$ and let $\bX=\Rcal_G(\bY)$. Hence applying first the MTP and then using Lemma \ref{lem:Q}, we get
\BEAS
\lefteqn{\int\sum_{x\in \d
    \circ}\frac{Y_{x\to\circ}\Rcal_{x\to\circ}(\bY)}{1+Y_{x\to\circ}\Rcal_{x\to\circ}(\bY)}\ind(Y_{x\to\circ}<\infty)\ind(\circ
  \in A)d\rho([G,\circ])}\\
&& =\int w_{\circ} \ind\left( \sum_{x\in \d \circ, x\in A}\ind(X_{x\to \circ}>0)\geq w_{\circ}+1\right)d\rho([G,\circ])\\
%&&+\int (w_{\circ}\wedge \sum_{x\in \d \circ} I_{x\to \circ}(Y))\ind(\circ \in A)d\rho([G,\circ])
%\\
&&= \int w_{\circ}\ind(\sum_{x\in \d \circ}\Pcal_{\circ \to x}(\bI(\bY))\geq w_{\circ}+1)\ind(\circ\in B)d\rho([G,\circ])%\\
%&& + \int (w_{\circ}\wedge \sum_{x\in \d \circ} I_{x\to \circ})\ind(\circ \in A)d\rho([G,\circ]),
\EEAS
where we used the fact $\ind(X_{x\to \circ}>0)=\Pcal_{\circ \to x}(\bI(\bY))$ by (\ref{eq:Rcal0}).
By Proposition \ref{prop:MG}, we know that we should consider the minimal such $\bY$ so that dividing by $\int \ind(\circ \in A)d\rho([G,\circ])=\frac{\EE[D^B]}{\EE[D^A+D^B]}$, we obtain:
\BEA
\nonumber \lefteqn{\lim_{z\to \infty}\int \Dcal_\circ(\Yb(z)) d\rho^A([G,\circ])= }\\
\label{eq:inflimb} && \inf\left\{ \int w_{\circ}\wedge \sum_{x\in \d \circ}I_{x\to \circ}d\rho^A([G,\circ]) + \frac{\EE[D^A]}{\EE[D^B]}\int w_{\circ}\ind(\sum_{x\in \d \circ}\Pcal_{\circ \to x}(\bI)\geq w_{\circ}+1)d\rho^B([G,\circ]) \right\},
\EEA
where the infimum is over all spatially invariant solution of $\bI=\Pcal_G\circ\Pcal_G(\bI)$ which are locally independent.
Thanks to the Markovian nature of the BUGWT, these recursions simplify into a recursive distributional equation that we now describe. Indeed the law of $I_{x\to y}$ is a Bernoulli distribution with parameter say $q$ if $x\in A$ and $p$ otherwise. Then the determination of $p$ and $q$ is done as follows: let $I^{A\to B},I^{A\to B}_i$ be independent Bernoulli random variables with parameter $p$ and $I^{B\to A},I^{B\to A}_i$ be independent Bernoulli random variables with parameter $q$; let $(N^A, W^A)$ and $(N^B,W^B)$ be independent random variables distributed according to $\hpi^A$ and $\hpi^B$ respectively. Then we must have the following equality in laws:
\BEAS
I^{A\to B} \stackrel{d}{=} \ind\left( \sum_{i=1}^{N^A} I^{B\to A}_i<W^A\right)&\mbox{ and, }&
I^{B\to A} \stackrel{d}{=} \ind\left( \sum_{i=1}^{N^B} I^{A\to B}_i<W^B\right).
\EEAS
Taking the expectation, we get:
\BEAS
q&=&\EE\left[I^{A\to B} \right]=\PP\left(N^A(p)<W^A\right)=g^A(p)\\
p&=&\EE\left[I^{B\to A} \right]=\PP\left(N^B(q)<W^B\right)=g^B(q).
\EEAS
We see that $\Fcal^A(q)$ correspond exactly to the integral in (\ref{eq:inflimb}) for a solution of $\bI=\Pcal_G\circ\Pcal_G(\bI)$ and the first claim of the Proposition follows.

We define
\BEAS
\lefteqn{F(p,q) =}\\
&& \EE\left[W^A\right]-\EE\left[ \sum_{0\leq j\leq W^A-1}\frac{(W^A-j)p^j}{j!}\phi^{(j)}_A(1-p)\right]+\frac{\EE[D^A]}{\EE[D^B]}\left(\EE\left[W^B\right] -\EE\left[ W^B\sum_{0\leq j\leq W^B} \frac{q^j}{j!}\phi_B^{(j)}(1-q)\right]\right),
\EEAS
so that $\Fcal^A(q)=F(g^B(q),q)$.

Let $\phi_A$ (resp. $\phi_B$) be the generating functions of $D^A$ (resp. $D^B$).
After simple computations, we get (by convention, the sum over the empty set is zero):
\BEAS
g^A(x) &=& \EE\left[ \sum_{0\leq j\leq W^A-1}\frac{x^j}{j!}\frac{\phi_A^{(j+1)}(1-x)}{\phi_A^{(1)}(1)}\right]\:\mbox{ and }\\
h^A(x)&=&-\frac{dg^A}{dx}(x) = \EE\left[ \frac{x^{W^A-1}}{(W^A-1)!}\frac{\phi_A^{(W^A+1)}(1-x)}{\phi_A^{(1)}(1)}\ind(W^A\geq 1)\right],
\EEAS
and similarly for $g^B$ and $h^B$. %Note in particular, that under the condition 

Simple computations show that:
\BEAS
\frac{\d F}{\d p}(p,q) &=&\EE\left[ \sum_{1\leq j\leq W^A}\frac{p^{j-1}}{(j-1)!}\phi^{(j)}_A(1-p)\right]= \phi^{(1)}_A(1)g^A(p)\\
\frac{\d F}{\d q}(p,q) &=& \frac{\EE[D^A]}{\EE[D^B]}\EE\left[ \frac{q^{W^B}}{(W^B-1)!}\phi_B^{(W^B+1)}(1-q)\ind(W^B\geq 1)\right] \\
&=& \phi^{(1)}_A(1)qh^B(q).
\EEAS
Hence we have
\BEAS
\frac{d\Fcal^A}{dq}(q) &=& \phi^{(1)}_A(1)h^B(q)\left( q-g^A\circ g^B(q)\right)
\EEAS
Note that if $\PP\left(0<W^B<D^B\right)=0$ then $h^B(x)=0$ for all $x\in [0,1]$ and if $\PP\left(0<W^B<D^B\right)>0$, then $h^B(x)>0$ for all $0<x<1$.
Moreover, since $g^A\circ g^B(0)\geq 0$ and $1\geq g^A\circ g^B(1)$, we see that (in the case where $\Fcal^A$ is not constant) the local minimums of $\Fcal^A$ must satisfy $q=g^A\circ g^B(q)$ so that
\BEAS
\inf_{q\in [0,1]}\{\Fcal^A(q), q=g^A\circ g^B(q)\}=\inf _{q\in[0,1]} \Fcal^A(q).
\EEAS
\end{proof}

\section{Application to orientability of random hypergraphs}\label{sec:orient}

We are now in position to prove Theorem \ref{the:hyperg} claimed in the Introduction. 
The fact that the random hypergraph $H_{n,\lfloor cn \rfloor,h}$ converges weakly almost surely is standard \cite{kim08}.
Hence we can apply our Theorem \ref{the:limM} with
\BEAS
D^A=h,\quad W^A=\ell &\mbox{ and, }& D^B=\Poi(ch),\quad W^B =k,
\EEAS
where $h>\ell\geq 1$ and $k\geq 1$.
Note that $Q(x,y) = e^{-x}\sum_{j\geq y}\frac{x^j}{j!} = \PP(\Poi(x)\geq y)$ so that we have:
\BEAS
\Fcal^A(q,c) &=& \EE\left[ \ell\wedge \Bin(h,g^B(q))\right]+\frac{kQ(chq,k+1)}{c},\\
%q&=& g^A(p) = \PP\left( \Bin(h-1,p)<\ell \right),\\
%p&=& 
g^B(q) &=& 1-Q(chq,k),% \PP\left( \Poi(chq)<k\right),
\EEAS
where we denoted explicitly the dependence in $c$ in the expression of $\Fcal^A$ given in Theorem \ref{the:limM}.

With the notations taken in the proof of Theorem \ref{the:limM}, we
have for $h-\ell\geq 2$,
\BEAS
g^A(x) &=& \PP\left( \Bin(h-1,x)<\ell\right)\\
h^A(x) &=& \frac{(h-1)\dots (h-\ell)}{(\ell-1)!}x^{\ell-1}(1-x)^{h-\ell-1}\\
h^{A'}(x)=\frac{dh^A}{dx}(x)&=& \frac{(h-1)\dots (h-\ell)}{(\ell-1)!}x^{\ell-2}(1-x)^{h-\ell-2}\left(\ell-1+(h-2\ell)x\right),
\EEAS
and if $\ell=h-1$, $h^A(x)=(h-1)x^{h-2}$ and $h^{A'}(x) = (h-1)(h-2)x^{h-3}$.
Similarly, we have for $k\geq 2$ (and the convention $0!=1$),
\BEAS
g^B(x)&=& 1-Q(chx,k)\\
h^B(x)&=& ch e^{-chx}\frac{(chx)^{k-1}}{(k-1)!}\\
h^{B'}(x)=\frac{dh^B}{dx}(x)&=& (ch)^2 e^{-chx}\frac{(chx)^{k-2}}{(k-2)!}\left(1-\frac{chx}{k-1}\right).
\EEAS
If $k=1$, we have $h^B(x)=che^{-chx}$ and $h^{B'}(x)=-(ch)^2e^{-chx}$.

We also define 
\BEAS
\Delta(x) = x-g^A\circ g^B(x),
\EEAS
so that we get
\BEAS
\Delta'(x) &=& 1-h^B(x)h^A\circ g^B(x)\\
\Delta''(x) &=& h^A\circ g^B(x)\left(h^B(x)\right)^2\left( \frac{h^{A'}\circ g^B(x)}{h^{A}\circ g^B(x)}-\frac{h^{B'}(x)}{h^B(x)^2}\right)=h^A\circ g^B(x)\left(h^B(x)\right)^2\delta(x)
\EEAS
A simple calculation shows that both $\frac{h^{A'}}{h^A}$ and
$\frac{h^{B'}}{(h^B)^2}$ are non-increasing. Hence since $g^B$ is also
non-increasing, we see that $\delta$ is non-decreasing and $\Delta''$
can vanish at most once on $(0,1)$, so that $\Delta'$ admits at most
two zeros on $[0,1]$ and $\Delta$ at most three. Moreover since
$\Delta(0)=0$ and $\Delta(1)>0$, there are only two cases: either
$\Delta(x)\geq 0$ for all $x\in [0,1]$ or there exists $0\leq u<v<1$
such that $\Delta(x)\geq 0$ for $x\in [0,u]\cup[v,1]$ and
$\Delta(x)\leq 0$ for $x\in[u,v]$. In particular in this last case,
$u$ (resp. $v$) is a local maximum (resp. minimum) of $\Fcal^A$.

Let $x^*(c)$ be the largest solution of $\Delta(x)=0$, then we have
\BEAS
\min_{q\in [0,1]}\Fcal^A(q,c)=\min\{ \ell, \Fcal^A(x^*(c),c)\},
\EEAS
since $\Fcal^A(0,c)=\ell$ and in the first case $x^*(c)=0$ while in the second case $x^*(c)=v$.
A simple computation shows that as soon as $x^*(c)>0$, we have $\frac{d\Fcal^A}{dc}(x^*(c),c)<0$.
Hence we need to find $c$ such that $x^*(c)>0$ and $\Fcal^A(x^*(c),c)=\ell$. Denoting $\xi^*=chx^*$, we can rewrite this last equation as
\BEA\label{eq:cxi}
c\EE\left[\left( \ell-\Bin(h,1-Q(\xi^*,k))\right)^+ \right]=kQ(\xi^*,k+1).
\EEA
By definition, we have $\xi^* =ch\PP\left( \Bin(h-1,1-Q(\xi^*,k))<\ell\right)$, so that since $\xi^*>0$, we can eliminate the variable $c$ in (\ref{eq:cxi}) and get
\BEAS
hk =\xi^*\frac{\EE\left[\left( \ell-\Bin(h,1-Q(\xi^*,k))\right)^+ \right]}{Q(\xi^*,k+1)\PP\left( \Bin(h-1,1-Q(\xi^*,k))<\ell\right)},
\EEAS
which is exactly the equation appearing in Theorem \ref{the:hyperg}.
We now show that this equation has a unique positive solution. By contradiction, assume that $0<\xi_1<\xi_2$ are two solutions. Then applying the map $\xi\mapsto\PP\left( \Bin(h-1,1-Q(\xi^*,k))<\ell\right)$ we can define $x_1<x_2$ and then $c_i=\frac{\xi_i}{hx_i}$, so that we have $\Fcal^A(x_1,c_1)=\Fcal^A(x_2,c_2)=\ell$ with $x_i=x^*(c_i)$. Since $\Delta$ is a non-increasing function of $c$, it implies that $x^*(c)$ is a non-decreasing function of $c$ and $c_1<c_2$. But then $\Fcal^A(x^*(c),c)=\ell$ for all $c\in [c_1,c_2]$, contradicting the relation $\frac{d\Fcal^A}{dc}(x^*(c),c)<0$ valid for $x^*(c)>0$.

As noted in the Introduction, $H_{n,\lfloor cn \rfloor,h}$ is $(\ell,k)$-orientable if and only if $M(G_n) = \ell|A_n|$. In particular, for $c>c^*_{h,\ell,k}$ as defined in Theorem \ref{the:hyperg}, we see that Theorem \ref{the:limM} implies the result since $\lim_{n\to \infty}\frac{M(G_n)}{|A_n|}<\ell$.
However in the case $c<c^*_{h,\ell,k}$, there is still a difficulty
since Theorem \ref{the:limM} only shows that $\lim_{n\to
  \infty}\frac{M(G_n)}{|A_n|}=\ell$. In words, there might exist
$o(n)$ vertices in $A_n$ with degree strictly less than $\ell$ in $M(G_n)$. 
In order to prove that with high probability, this will not occur we need to rely on specific properties of the $h$-uniform hypergraph $H_{n,m,h}$.

%\newpage
This part of the proof is also done in \cite{gawoarxiv10} and we now
describe the main steps of the end of the proof. First, it will be
more convenient to work with a different model of random hypergraph
that we denote $H_{n,p,h}$: each possible $(h)$-hyperedge is included
independently with probability $p$. More precisely, given $n$ vertices
with $n\geq h$, we obtain $H_{n,p,h}$ by including each $h$-tuple of
vertices with probability $p$, independently for each of the
${n\choose h}$ tuples. Standard arguments show that if $p{n \choose
  h}=c n$, i.e. $p=ch/{n-1\choose h-1}$ then the $H_{n,p,h}$ model is
essentially equivalent to the $H_{n,\lfloor c n \rfloor,h}$ model \cite{kim08}.

Let $c<\tilde{c}<c^*_{h,\ell,k}$ and consider the bipartite graph $\tilde{G}_n$ obtained from $H_{n, \tilde{p},h}$ with $\tilde{p}=\tilde{c} h/{n-1\choose h-1}$.
Given a maximum admissible spanning subgraph $S(\tilde{G}_n)$, we say that a vertex
in $A_n$ (resp. in $B_n$) with degree $\ell$ (resp. $k$) in such a spanning subgraph is
saturated. An alternating path is a path in which the edges alternatively are covered in $S$ and uncovered in $S$.
If there exists a vertex $v$ in $A_n$ which is not saturated, we consider the
subgraph obtained as follows: remove the edges in $S$ adjacent to
$v$ and keep all alternating paths starting from $v$. We denote by $K$
the resulting bipartite graph.
It is easy to see that
\begin{itemize}
\item the degree in $K$ of $v$ is at least $h-\ell+1$;
\item if $v\in B_n\cap K$, then $v$ is saturated in $S$ and its degree
  in $K$ is $k+1$;
\item if $w\in A_n\cap K$ then if $w$ is saturated in $S$, its degree in
  $K$ is at least $h-\ell+1$, otherwise its degree is at least $h-\ell+2$.
\end{itemize}

We now use a density argument in order to show
that the size of $K$ is linear in $n$ w.h.p. 
Suppose that we are in a case where $k\geq 2$ (the case $h-\ell\geq 2$ is similar). 
Let $i$ be the number of vertices in $B_n\cap K$. Each such vertex is adjacent to $k$ covered edges in $S$ and 1 uncovered edge in $S$. Hence, we have $|A_n\cap K|\geq \frac{ki}{\ell}$. Hence by Lemma 4.1 of \cite{gawo10}, there exist $\gamma=\gamma(h, \frac{k}{\ell})$ such that $|K\cap B_n|\geq \gamma n$ (note that the probability space considered in Lemma 4.1 of \cite{gawoarxiv10} is slightly different from ours but asymptotically equivalent as shown by Lemma 3.1 of \cite{gawoarxiv10}).
\iffalse
Let $x_j$ be the number of vertices in $A\cap K$ adjacent to $j$ covered edges in $S\cap K$, so that $\sum_{j=1}^\ell jx_j= ki$. We define:
\BEAS
{n\choose i}\sum_j {{i\choose j}\choose x_j} n^{-(j-1)x_j}\leq \frac{n^i}{i!} \sum_j \frac{i^{jx_j}}{x_j!}n^{-(j-1)x_j} = \sum_j L_i(j,x_j).
\EEAS

We have
\BEAS
\frac{L_{i+j}(j,x)}{L_{i}(j,x)}
\EEAS

Note that in this case, if the number of vertices (i.e. vertices from $B$) is $i$, then it has at least $(k+1)i/h$ hyperedges. 
Let $Z_i$ be the number of subgraphs on $i$ vertices with at least $(k+1)i/h$ hyperedges for $i=i_0,\dots, \delta n$ where $i_0$ is the least $i$ such that ${i\choose h}\geq (k+1)i/h$. Note that $i_0$ depends only on $h$ and $k$ and not on $n$. 
Then, in $H_{n,\tp,h}$, we have
\BEAS
\EE[Z_i]\leq {n\choose i}{{i\choose h}\choose (k+1)i/h}\tp^{(k+1)i/h}\leq \frac{n^i}{i!}\frac{i^{(k+!)i}}{((k+1)i/k)!}\tp^{(k+1)i/h}=:L_i,
\EEAS
where $(k+1)i/h$ actually means $\lceil (k+1)i/h \rceil$.
Observe that 
\BEAS
\frac{L_{i+h}}{L_i} =O\left( \frac{n^h}{i^h}\frac{i^{h(k+1)}}{i^{k+1}}n^{-(h-1)(k+1)}\right)=O\left( \left(\frac{i}{n}\right)^{(h-1)(k+1)-h}\right)=O\left(\delta^{(h-1)k-1} \right).
\EEAS
Since $h,k\geq 2$, the ratio $L_{i+h}/L_i$ exponentially decreases. Now we have for $i=i_0,\dots, i_0+h-1$,
\BEAS
L_i = O\left( n^i n^{-i(h-1)(k+1)/h}\right)=O\left( n^{-i(k-(k+1)/h)}\right),
\EEAS
so that we finally get
\BEAS
\PP\left( |K\cap B_n|\leq \delta n\right)\leq O\left( n^{-i_0(k-(k+1)/h)}\right)
\EEAS
\fi

For $c<c^*_{h,\ell,k}$, we now construct an admissible spanning subgraph on $H_{n,p,h}$ with $p=c h/{n-1\choose h-1}$ and saturating all vertices in $A_n$. Note that since $p<\tp$, there is a natural coupling between $H_{n,p,h}$ and $H_{n,\tp,h}$: we can obtain $H_{n,p,h}$ from  $H_{n,\tp,h}$ by removing each hyperedge independently with probability $\tp-p>0$.
Start with a maximum admissible spanning subgraph $M(\tilde{G}_n)$ as above. If this maximum spanning subgraph has all vertices in $\tilde{A}_n$ which are saturated, then we are done, since to obtain an admissible spanning subgraph on $H_{n,p,h}$ we need only to remove the hyperedges in $H_{n,\tp,h}$ which are not in $H_{n,p,h}$.
If this maximum spanning subgraph has $o(n)$ vertices in $\tilde{A}_n$ which are not saturated. We denote by $gap_n= \ell|\tilde{A}_n|-M(\tilde{G}_n)=o(n)$. 
Note that for any non saturated hyperedge, we can construct a subgraph $K$ as above. If we remove an hyperedge in $K$, then either this edge was non-saturated and $gap_n$ decreased by more than one or, this edge was saturated but then at least one nonsaturaded hyperedge increases its degree in the spanning subgraph. So that in all cases, $gap_n$ decreased by at least one. We then proceed as follows: attach to each hyperedge a uniform $[0,1]$ random variable independently from each other and denoted $U_a$, for $a\in A_n$. 
To obtain $H_{n,p,h}$, we remove all hyperedges $a$ such that $U_a\leq \tilde{p}-p$.
We can do this sequentially starting with the one with the lowest $U_a$.
Suppose that at the $k$-th step (with $k\geq 1$), there are still non saturated hyperedges, we consider the union of the subgraphs $K$ described above. We showed that their size is at least $\gamma n$. Hence with positive probability, the hyperedge removed will decrease the value of $gap_n$. If it reaches zero, then we are done, otherwise, we consider the new sets $K$ and proceed to the next step.
Suppose that at the end of the procedure, $gap_n$ is still positive. We see that we decreased $gap_n$ in total by a binomial random variable with a number of trials linear in $n$ and with a positive probability so that with high probability, this binomial random variable is larger than say $\epsilon n$ for a sufficiently small $\epsilon>0$. Hence with high probability $gap_n$ reached zero by the end of the procedure and this concludes the proof.

%However a density argument as done in Section 6 or 7 of \cite{kim08} allows to conclude.

%But then consider the induced subgraph $G'$ where only vertices in $A$ (resp. $B$) with degree in the spanning subgraph less than $\ell$ (resp. $k$) are kept. The crucial observation is that any vertex in $A$ has degree larger than two in $G'$ and any vertex in $B$ has either degree zero or degree larger than two in $G'$.

\iffalse
We end this section with the explicit computation of $c^*_{h,1,k}$. Not that this value was already obtained in \cite{fkp11} and this computation is given only as a sanity check. In the case $\ell=1$, we get
\BEAS
\Fcal^A(q,c) &=& 1-Q(chq,k)^h+\frac{k}{c} Q(chq,k+1),\\
q=(1-p)^{h-1} &\mbox{ and }& p=1-Q(chq,k).
\EEAS
Note that $q=Q(chq,k)^{h-1}$. This fixed point equation also appear in \cite{fkp11} and denote by $\oq$ its largest solution. As in\cite{fkp11}, we denote $\xi = ch\oq$. Then we can check that $\inf_q \Fcal^A(q,c) = 1-Q(\xi,k)^h+\frac{k}{c}Q(\xi,k+1)$ so that we must find $c$ such that $Q(\xi,k)^h=\frac{k}{c}Q(\xi,k+1)$. Using the fact that $\xi=chQ(chq,k)^{h-1}$, we can rewrite it as:
\BEAS
\xi Q(\xi,k)= kh Q(\xi,k+1),
\EEAS
this is exactly equation (2.4) in \cite{fkp11} and the rest of the computation can be found there.
\fi

%\section{Proof of Proposition \ref{prop:MG}}\label{sec:proof}

\section{Acknowledgements}
The author would like to thank Laurent Massouli\'e for pointing out a mistake in a preliminary version of this work.
The author acknowledges the support of the French Agence Nationale de la Recherche (ANR) under reference ANR-11-JS02-005-01 (GAP project).

%\bibliographystyle{abbrv}
%\bibliography{alloc}

\end{document}